\newcommand{\res}{\!\!\mathop{\hbox{
                                \vrule height 7pt width .5pt depth 0pt
                                \vrule height .5pt width 6pt depth 0pt}}
                                \nolimits}
\def\z{{\bf z}}
\newtheorem{theorem}{Theorem}[section]
\newtheorem{lemma}[theorem]{Lemma}
\newtheorem{definition}[theorem]{Definition}
\newtheorem{proposition}[theorem]{Proposition}
\newtheorem{corollary}[theorem]{Corollary}
\newtheorem{remark}[theorem]{Remark}
\newtheorem{example}[theorem]{Example}
\newtheorem*{theorem*}{\it Theorem}
\def\vint_#1{\mathchoice%
          {\mathop{\kern 0.2em\vrule width 0.6em height 0.69678ex depth -0.58065ex
                  \kern -0.8em \intop}\nolimits_{\kern -0.4em#1}}%
          {\mathop{\kern 0.1em\vrule width 0.5em height 0.69678ex depth -0.60387ex
                  \kern -0.6em \intop}\nolimits_{#1}}%
          {\mathop{\kern 0.1em\vrule width 0.5em height 0.69678ex
              depth -0.60387ex
                  \kern -0.6em \intop}\nolimits_{#1}}%
          {\mathop{\kern 0.1em\vrule width 0.5em height 0.69678ex depth -0.60387ex
                  \kern -0.6em \intop}\nolimits_{#1}}}
\def\vintslides_#1{\mathchoice%
          {\mathop{\kern 0.1em\vrule width 0.5em height 0.697ex depth -0.581ex
                  \kern -0.6em \intop}\nolimits_{\kern -0.4em#1}}%
          {\mathop{\kern 0.1em\vrule width 0.3em height 0.697ex depth -0.604ex
                  \kern -0.4em \intop}\nolimits_{#1}}%
          {\mathop{\kern 0.1em\vrule width 0.3em height 0.697ex depth -0.604ex
                  \kern -0.4em \intop}\nolimits_{#1}}%
          {\mathop{\kern 0.1em\vrule width 0.3em height 0.697ex depth -0.604ex
                  \kern -0.4em \intop}\nolimits_{#1}}}
\def\R{\mathbb R}
\def\N{\mathbb N}
\numberwithin{equation}{section}
\def\1{\raisebox{2pt}{\rm{$\chi$}}}
\def\e{{\bf e}}
\def\v{{\bf v}}
\def\b{{\bf b}}
\renewcommand{\v}{\mathrm{v}}
\newcommand{\vf}{\mathrm{f}}
\newcommand{\vi}{\mathrm{i}}
\newcommand{\V}{\mathrm{V}}
\newcommand{\E}{\mathrm{E}}
\numberwithin{equation}{section}
\def\a{\mathbf{a}}
\def\1{\raisebox{2pt}{\rm{$\chi$}}}
\definecolor{violet(ryb)}{rgb}{0.53, 0.0, 0.69}
\definecolor{internationalorange}{rgb}{1.0, 0.31, 0.0}
\begin{document}

\title[The Cheeger Problem]{\bf The Cheeger Cut and Cheeger Problem in Metric Graphs }

\author[J. M. Maz\'on]{Jos\'e M. Maz\'on}

\address{J. M. Maz\'{o}n: Departamento de An\'{a}lisis Matem\'{a}tico,
Universitat de Val\`encia, Dr. Moliner 50, 46100 Burjassot, Spain.
 {\tt mazon@uv.es }}

\keywords{Cheeger problem, Cheeger cut, metric graphs, functions of total variation, total variation flow, the $1$-Laplacian\\
\indent 2010 {\it Mathematics Subject Classification:5R02,05C21, 47J35}
}

\setcounter{tocdepth}{1}

\date{\today}

\begin{abstract}
 For discrete  weighted graphs there  is sufficient literature about the Cheeger cut and the Cheeger problem, but  for metric graphs there are few results about
   these problems.
Our aim is to study the Cheeger cut and the Cheeger problem in metric graphs. For that, we use the concept of total variation and perimeter in metric graphs introduced in \cite{Mazon}, which  takes into account the jumps at the vertices of the functions of bounded variation. Moreover, we study the eigenvalue problem for the minus $1$-Laplacian operator in metric  graphs, whereby we give a method to solve the  optimal Cheeger cut problem.
\end{abstract}

\maketitle

{ \renewcommand\contentsname{Contents}
\setcounter{tocdepth}{3}
\addtolength{\parskip}{-0.2cm}
{\small \tableofcontents}
\addtolength{\parskip}{0.2cm} }

\section{Introduction}

A metric graph is a combinatorial graph where the edges are considered as intervals of the real line with a distance on each one of them and are glued together according to the combinatorial structure. The resulting metric measure space allows to introduce a family of differential operators acting on each edge $\e$ considered as  an interval $(0, \ell_\e)$ with boundary conditions at the vertices. We refer to the pair  formed by the metric graph and the  family of differential operators as {\it quantum graph}. During the last two decades, quantum graphs became an extremely popular
subject because of numerous applications in mathematical physics, chemistry
and engineering. Indeed, the literature on quantum graphs is vast and extensive and there is no chance to give even a brief overview of the subject here.
We only mention a few recent monographs and collected works with a comprehensive bibliography \cite{BCFK}, \cite{BK}, \cite{EKKST}, \cite{GS}, \cite{KS}, \cite{Mugnolo} and \cite{Post}.

The historical motivation of the Cheeger cut problem is an isoperimetric-type inequality
that was first proved by J. Cheeger in \cite{Cheeger}  in the context of compact, $n$-dimensional
Riemannian manifolds without boundary. As a consequence, one obtains the validity
of a Poincar\'{e} inequality with optimal constant uniformly bounded from below by
a geometric constant. Let $\lambda_1(M)$ be the least non-zero eigenvalue of the Laplace-Beltrami operator on $M$, then Cheeger proved that
\begin{equation}\label{Cheeger1}
\lambda_1(M)\geq \frac12 h(M)^2, \quad h(M):= \inf_{A \subset M} \frac{P(A)}{ \min\{ V(A), V(M \setminus A) \}}
\end{equation}
where $V (A)$ and $P(A)$ denote, respectively, the Riemannian volume and perimeter
of $A$.

 The first  Cheeger estimates on discrete graphs are due to Dodziuk \cite{D}
and Alon and Milmann \cite{AM}. Since then, these estimates have been improved and various variants have been proved. Consider  a finite weighted connected graph $G =(V, E)$, where $V = \{x_1, \ldots , x_n \}$ is the set of vertices  (or nodes) and $E$ the set of edges, which are weighted by a function  $w_{ji}= w_{ij} \geq 0$, $(x_i,x_j) \in E$. In this context, the Cheeger cut value of a partition $\{ S, S^c\}$ ($S^c:= V \setminus S$) of $V$ is defined as
$$\mathcal{C}(S):= \frac{{\rm Cut}(S,S^c)}{\min\{{\rm vol}(S), {\rm vol}(S^c)\}},$$
where
${\rm Cut}(A,B) = \sum_{x_i \in A, x_j \in B} w_{ij}$
and ${\rm vol}(S)$ is the volume of $S$, defined as ${\rm vol}(S):= \sum_{x_i \in S} d_{x_i}$, being $d_{x_i}:= \sum_{j=1}^n w_{x_i,x_j}$ the weight at the vertex $x_i$.  Then,
\begin{equation}\label{NPCheeger}h(G) := \min_{S \subset V} \mathcal{C}(S)\end{equation}
is called the {\it Cheeger constant}, and a partition $\{ S, S^c\}$ of $V$ is called a {\it Cheeger cut} of $G$ if $h(G)=\mathcal{C}(S)$. Unfortunately, the Cheeger minimization problem of computing $h(G)$ is NP-hard (\cite{HB}, \cite{SB1}). However, it turns out that $h(G)$ can be approximated by the first positive eigenvalue $\lambda_1$ of  the graph Laplacian  thanks to the following Cheeger inequality (\cite{Ch}):
$$
\frac{\lambda_1}{2} \leq h(G) \leq \sqrt{2\lambda_1}.
$$

 This motivates the spectral clustering method (\cite{Luxburg}), which, in its simplest form, thresholds the least non-zero eigenvalue of the graph Laplacian to get an approximation to the Cheeger constant and, moreover, to a Cheeger cut. In order to achieve a better approximation than the one provided by the classical spectral clustering method, a spectral clustering based on the graph $p$-Laplacian was developed in \cite{BH1}, where it is showed that the second eigenvalue of the graph $p$-Laplacian  tends to the Cheeger constant $h(G)$ as $p \to 1^+$. In  \cite{SB1} the idea was further developed by directly considering the variational characterization of the Cheeger constant $h(G)$
 \begin{equation}\label{cheegerSB}
 h(G) = \min_{u \in L^1} \frac{ \vert u \vert_{TV}}{\Vert u - {\rm median}(u)) \Vert_1},
 \end{equation}
where
$$\vert u \vert_{TV} := \frac{1}{2} \sum_{i,j=1}^n w_{ij} \vert u(x_i) - u(x_j) \vert.$$

 In \cite{SB1}, it was proved that  the solution of the variational problem \eqref{cheegerSB}  provides an exact solution of the Cheeger cut problem. If a global minimizer $u$ of \eqref{cheegerSB} can be computed, then it can be shown that this minimizer would be the indicator function of a set $\Omega$ (i.e. $u = \1_\Omega$)
corresponding to a solution of the NP-hard problem \eqref{NPCheeger}.

The subdifferential of the energy functional $\vert \cdot \vert_{TV}$ is minus the $1$-Laplacian in graphs. Using the nonlinear eigenvalue problem  $\lambda \, {\rm sign}(u) \in -\Delta_1 u$, the theory of $1$-Spectral Clustering is developed in \cite{Chang1},  \cite{Changetal01},  \cite{ChSZ} and \cite{HB}. For a generalization of the above results to the framework of random walk spaces see \cite{MST0} and \cite{MST1}.

The only results aboutthe Cheeger cut problem in metric graphs that we know are the  ones given by Del Pezzo and  Rossi \cite{DelPR1} in which they study the first nonzero eigenvalue of the $p$-Laplacian on a quantum graph with Kirchoff boundary conditions on the vertices and study the Cheeger cut problem, taking the limit as $p \to 1$ of the eigenfunctions. Now, as we will see later, their  concept of total variation of a function of bounded variation in a metric graph is not  clear and, consequently, also their concept of perimeter (see Remark \ref{DPRossi}).  Here we use a different concept of total variation for functions in metric graph, proposed in \cite{Mazon}, and consequently of perimeter, that  takes into account the jumps of the function at the vertices.

Following the work by Nicaise \cite{Nicaise}, where  a Cheeger inequality in metric graphs is obtained, there have been very few results in this direction (see \cite{KM}, \cite{KN}  and \cite{Post}).

On the other hand, the Cheeger paper \cite{Cheeger} also motivated the so-called {\it Cheeger problem}. Given a bounded domain $\Omega \subset \R^N$, the {\it Cheeger constant} of $\Omega$ is defined as
$$h_1(\Omega) := \inf \left\{ \frac{\mbox{Per}(E)}{\vert E\vert} \ : \ E \subset \Omega , \ E \ \hbox{ with finite perimeter,} \ \vert E \vert > 0 \right\},$$
where $\mbox{Per}(E)$ is the perimeter of $E$ and $\vert E\vert$ its Lebesgue measure.
Any set $E \subset \Omega$  such that
$$\frac{\mbox{Per}(E )}{\vert E  \vert} = h_1(\Omega),$$  is called a {\it Cheeger set} of $\Omega$. Furthermore, we say that $\Omega$ is {\it calibrable}
if it is  a Cheeger set of itself, that is, if
$$\frac{\mbox{Per}(\Omega )}{\vert \Omega  \vert} = h_1(\Omega).$$ We shall generically refer to the {\it Cheeger problem}, as far as the computation or estimation of $h_1(\Omega)$, or the characterization of Cheeger sets of $\Omega$, are concerned. In the last year there has been a lot of literature on the Cheeger problem, see \cite{Leonardi} and \cite{Parini} for surveys about the Cheeger problem and \cite{MRT}, \cite{MRTLibro} for the nonlocal Cheeger problem.

It is well known that the Cheeger constant of $\Omega$  is the limit of the sequence of first
eigenvalues of the $p$-Laplacian (with Dirichlet conditions) when $p$ tends to $1$, see \cite{KF}. A similar result has been obtained by  Del Pezzo and  Rossi in \cite{DelPR2}, in the context of metric graphs, but here again the problem is their concept of perimeter in metric graphs. For the Cheeger problem in random walk spaces, that has as a particular case the weighted graphs, see \cite{MST1}.

The aim of this paper is to study the Cheeger cut and Cheeger problem in metric graphs. We introduce the concepts of Cheeger and calibrable sets in metric graphs and  we also study the eigenvalue problem  whereby we give a method to solve the  optimal Cheeger cut problem. To do that we work in the framework we developed in \cite{Mazon} to study the total variation flow in metric graphs.

The structure of the paper is as follows. In Section \ref{prelim} we recall the notion of metric graphs and the results about functions of bounded variation in metric  graphs that we need. Then, in Section \ref{Cheeger problem} we study the Cheeger problem. We introduce the concepts of Cheeger and calibrable sets  in metric graphs, we give different characterizations of the Cheeger constant of a set and its relation with the Max-Flow  Min-Cut Theorem and,  moreover, we characterize the calibrable sets. Section \ref{Eigenpair} is devoted to the eigenvalue problem for the $1$-Laplacian in metric graphs and its relations with  the Cheeger problem.  In Section \ref{Cheegerconstant} we study the Cheeger cut in metric graphs. We obtain a characterization similar to the one obtained in \cite{SB1} for weighted graphs, which  allows us to  prove the existence of  an optimal Cheeger cut, and its relation with the eigenvalue problem for the $1$-Laplacian obtaining similar results to the  ones in \cite{Chang1} for  weighted graphs, whereby we give a method to solve the  optimal Cheeger cut problem.
 Finally, we also obtained a Cheeger Inequality in metric graphs.

\section{Preliminaries}\label{prelim}

In this section, after giving the basic concepts of metric graphs, we recall the results about total variation functions introduced in \cite{Mazon} that is the framework in which we developed our work.

\subsection{Metric  Graphs} We  recall here some basic knowledge about metric graphs,
see for instance
\cite{BK} and  the references therein.

A graph $\Gamma$ consists of a finite or countable infinite set
of vertices $\V(\Gamma)=\{\v_i\}$ and a set of edges $\E(\Gamma)=\{\e_j\}$ connecting the
vertices. A graph $\Gamma$ is said to be a finite graph if the number of edges
and the number of vertices are finite.
 An edge and a vertex on
that edge are called incident. We will denote $\v\in \e$ when the edge $\e$ and the vertex $\v$ are incident.
We define $\E_{\v}(\Gamma)$ as the set of all edges incident to $\v$, and the {\it degree} of $\v$ as $d_\v:=  \sharp \E_{\v}(\Gamma)$. We define the {\it boundary} of $V(\Gamma)$ as
$$\partial V(\Gamma):= \{ \v \in V(\Gamma) \ : \ d_\v =1 \},$$
and its {\it interior} as
$${\rm int}( V(\Gamma)) := \{ \v \in V(\Gamma) \ : \ d_\v > 1 \}.$$

We will assume the absence of loops,
since if these are present, one can break them into pieces by introducing
new intermediate vertices. We also assume the absence of multiple edges.

A {\it walk} is a sequence of edges $\{\e_1,\e_2,\e_3,\dots\}$ in which, for each $i$ (except the last), the end of  $\e_i$  is
the beginning of $\e_{i+1}$. A {\it trail} is a walk in which no edge is repeated.
A {\it path} is a trail in which no vertex is repeated.

From now on we will deal with a connected, compact and metric graph~$\Gamma$:

\noindent $\bullet$ A  graph $\Gamma$ is a metric graph if
\begin{enumerate}
	\item each edge $\e$ is assigned  with a positive length $\ell_{\e}\in(0,+\infty];$

\item   for each edge $\e$, a coordinate is assigned to each point of it, including its vertices. For that purpose, each   edge $\e$ is identified with an ordered pair
$(\vi_{\e},\vf_{\e})$ of vertices, being $\vi_{\e}$ and $\vf_{\e}$ the
initial and terminal
vertex of $\e$ respectively, which has no  sense of meaning when travelling along the path  but allows us to define  coordinates by means of an increasing function
	$$
	\begin{array}{rlcc}
 c_\e:&\e&\to& [0,\ell_\e]\\
 &x&\rightsquigarrow& x_{\e}
 \end{array}
$$
such that, letting $c_\e(\vi_\e):=0$ and $c_\e(\vf_\e):=\ell_{\e}$, it is exhaustive;  $x_{\e}$ is called the coordinate of the point $x\in \e$.
\end{enumerate}

\noindent $\bullet$ A graph is said to be
connected if a path exists between every pair of vertices, that is, a graph which is
connected in the usual topological sense.

\noindent  $\bullet$ A compact metric graph is a finite metric graph whose edges  all have finite
length.

\medskip

If a sequence of edges $\{\e_j\}_{j=1}^n$  forms a path, its length is
defined as $\sum_{j=1}^n\ell_{\e_j}.$  The length of a metric graph, denoted $\ell(\Gamma)$, is the sum of the length of all its edges. Sometime we identify $\Gamma$ with $$\Gamma \equiv\bigcup_{\e \in E(\Gamma)} \e.$$

Given a set $A \subset \Gamma$, we define its {\it length} as
$$\ell (A):= \sum_{ \e \in E(\Gamma), A \cap \e \not=\emptyset} \mathcal{L}^1( c_{\e}(A \cap \e)).$$

For two vertices $\v$ and $\hat\v,$ the distance between $\v$ and $\hat \v$, $d_\Gamma(\v,\hat \v)$, is defined as the
minimal length of the paths connecting them.  Let us be more precise and consider $x$, $y$ two points in the graph $\Gamma$.

-if $x,y\in\e$ (they belong to the same edge, note that they can be vertices), we define {\it the distance-in-the-path-$\e$}   between $x$ and $y$  as $$\hbox{dist}_{\e}(x,y):=|y_\e-x_\e|;$$

-if $x\in \e_a$, $y\in \e_b$, with $\e_a$ and $\e_b$ different edges, let $P=\{\e_a,\e_1,\dots,\e_{n},\e_b\}$ be a path ($n\ge 0$) connecting them.
 Let us call $\e_{0} = \e_a$ and $\e_{n+1}= \e_b$. Following the definition given above for a path, set $\v_{0}$ the vertex that is the end of  $\e_0$  and
the beginning of  $\e_{1}$ (note that these vertices need not be the  terminal and the initial vertices of the edges that are taken into account), and   $\v_{n}$   the vertex that is the end of  $\e_n$ and the beginning of $\e_{n+1}$.
We will say that   the {\it distance-in-the-path-$P$} between $x$ and $y$ is equal to
$$\hbox{dist}_{\e_0}(x,\v_0)+ \sum_{1\le j\le n}\ell_{\e_j}+ \hbox{dist}_{\e_{n+1}}(\v_n,y) .$$
We define the distance between $x$ and $y$, that we will denote by $d_\Gamma(x,y)$, as the infimum  of all the {\it distances-in-paths} between $x$ and $y$, that is,
$$
\begin{array}{lr}d_\Gamma(x,y)&= \inf\Big\{ \hbox{dist}_{\e_0}(x,\v_0)+ \sum_{1\le j\le n}\ell_{\e_j}+ \hbox{dist}_{\e_{n+1}}(\v_n,y):\qquad\qquad \\[10pt] & \qquad\qquad \hbox{ $\{\e_0,\e_1,\dots,\e_{n},\e_{n+1}\}$   path connecting $x$ and $y$} \Big\}.
\end{array}
$$

 We remark that the distance between two points $x$ and $y$ belonging to the same edge $\e$ can be strictly smaller than $|y_\e-x_\e|$. This happens when there is a path connecting them (using  more edges than $\e$) with length smaller than $|y_\e-x_\e|$.

A function $u$ on a metric graph $\Gamma$ is a collection of functions $[u]_{\e}$
defined on
$(0,\ell_{\e})$ for all $\e\in \E(\Gamma),$
not just at the vertices as in discrete models.

Throughout this work, $ \int_{\Gamma} u(x)  dx$ or  $ \int_{\Gamma} u$ denotes
$ \sum_{\e\in \E(\Gamma)} \int_{0}^{\ell_{\e}} [u]_{\e}(x_\e)\, dx_\e$. Note that given $\Omega \subset \Gamma$, we have
$$\ell(\Omega) = \int_\Gamma \1_\Omega dx.$$

Let $1\le p\le +\infty.$ We say that $u$ belongs to  $L^p(\Gamma)$ if
$[u]_{\e}$ belongs to $L^p(0,\ell_{\e})$ for all $\e\in \E(\Gamma)$ and
\[
	\|u\|_{L^{p} (\Gamma)}^p\coloneqq\sum_{\e\in \E(\Gamma)}
	\|[u]_{\e}\|_{L^{p}(0,\ell_{\e})}^p<+\infty.
\]
 The Sobolev space $W^{1,p}(\Gamma)$ is defined as
the space of  functions $u$ on $\Gamma$ such  that
$[u]_{\e}\in W^{1,p}(0,\ell_{\e})$
for all $\e\in \E(\Gamma)$ and
\[
	\|u\|_{W^{1,p}(\Gamma)}^p\coloneqq \sum_{\e\in \E(\Gamma)}
	\|[u]_{\e}\|_{L ^p(0,\ell_{\e})}^p+\|[u]_{\e}{}^\prime\|_{L ^p(0,\ell_{\e})}^p<+\infty.
\]
The space $W^{1,p}(\Gamma)$ is a Banach space for $1 \le p \le\infty$.
It is reflexive for $1 < p < \infty$ and separable for $1 \le p < \infty.$
Observe that in the definition of $W^{1,p}(\Gamma)$ we does not assume the continuity at the vertices.

A quantum graph is a metric graph $\Gamma$ equipped with a
differential operator
acting on the edges together with vertex conditions.
In this work, we will consider the $1-$Laplacian differential operator given by
\[
	\Delta_1 u(x):=
	\left(\frac{ u^{\prime}(x)}{|u^{\prime}(x)|}\right)^{\prime},
\]
on each edge.

\subsection{BV functions and integration by parts}

We need to recall the concept of bounded variation  functions and  their total variation in metric  graphs that we introduce in \cite{Mazon} since this is the framework to study the Cheeger problem.

For bounded variation functions of one variable we follow \cite{AFP}.
Let $I \subset \R$  be an interval, we say that a
function $u \in L^1(I)$ is of bounded variation if its
distributional derivative $Du$ is a Radon measure on $I$ with
bounded total variation $\vert Du \vert (I) < + \infty$.
 We denote
by $BV(I)$ the space of all  functions of bounded variation in $I$.
It is well known (see \cite{AFP}) that given $u \in BV(I)$ there
exists $\overline{u}$ in the equivalence class of $u$, called a
good representative of $u$, with the following properties. If $J_u$
is the set of atoms of $Du$, i.e., $x \in J_u$ if and only if $Du(\{
x \}) \not= 0$, then $\overline{u}$ is continuous in $I \setminus
J_u$ and has a jump discontinuity at any point of $J_u$:
$$\overline{u}(x_{-})  := \lim_{y \uparrow x}\overline{u}(y) = Du(]a,x[), \ \ \ \
\ \overline{u}(x_{+}) := \lim_{y \downarrow x}\overline{u}(y) =
Du(]a,x]) \ \ \ \forall \, x \in J_u,$$ where by simplicity we are
assuming that $I = ]a,b[$. Consequently,
$$\overline{u}(x_{+}) - \overline{u}(x_{-})
 = Du(\{ x \}) \ \ \ \forall \, x \in J_u.$$
Moreover, $\overline{u}$ is differentiable at ${\mathcal L}^1$
     a.e. point of $I$, and the derivative $\overline{u}'$ is the
density of $Du$ with respect to ${\mathcal L}^1$.
For $u \in BV(I)$, the  measure $Du$ decomposes into its absolutely continuous and
singular parts $Du = D^a u + D^s u$. Then $D^a u = \overline{u}' \ {\mathcal L}^1$. We
also split $D^su$ in two parts: the {\it jump} part $D^j u$ and the {\it Cantor} part
$D^c u$. $J_u$ denotes the set of atoms of $Du$.

 It is well known (see for instance \cite{AFP}) that
$$D^j u = Du \res J_u = \sum_{x \in J_u} \overline{u}(x_{+}) - \overline{u}(x_{-}),$$
and also,
$$\vert Du \vert (I) = \vert D^au \vert (I) + \vert D^j u \vert (I) + \vert D^c u \vert
(I) $$ $$= \int_a^b \vert \overline{u}'(x) \vert \, dx + \sum_{x
\in J_u} \vert \overline{u}(x_{+}) - \overline{u}(x_{-}) \vert +
\vert D^c u \vert (I).$$
Obviously, if $u \in BV(I)$ then $u \in W^{1,1}(I)$ if and only if
$D^su \equiv 0$, and in this case we have $Du = \overline{u}' \
{\mathcal L}^1$.

A measurable subset $E \subset I$ is a set of {\it finite perimeter} in $I$ if $\1_E \in BV(I)$, and its perimeter is defined as
$${\rm Per}(E, I):= \vert D \1_E \vert (I).$$

From now on, when we deal with point-wise valued $BV$-functions we  shall always use the
good representative.

Given $\z \in W^{1,2}(]a,b[)$ and $u \in BV(]a,b[)$, by $\z Du$ we mean the Radon measure
in $]a,b[$ defined as
$$\langle \varphi, \z Du \rangle := \int_a^b \varphi \z \, Du \ \
\ \ \ \ \forall \, \varphi \in C_c(]a,b[).$$

Note that if $\varphi \in \mathcal{D}(]a,b[)$, then
$$\langle \varphi, \z Du \rangle = - \int_a^b u \z^{\prime} \varphi dx - \int_a^b u \z \varphi^{\prime} dx,$$
which is  the definition given by Anzellotti in \cite{Anzellotti}.

Working as in \cite[Corollary 1.6]{Anzellotti}, it is easy to see that
\begin{equation}\label{boound}
\vert \z Du \vert (B) \leq \Vert \z \Vert_{L^{\infty}(]a,b[)} \vert Du \vert (B) \quad \hbox{for all Borelian} \ B \subset ]a,b[.
\end{equation}

Then, $\z Du$ is absolutely continuous  with  respect to the measure $\vert Du \vert$.

The following result was given in \cite[Proposition 2.1]{Mazon}

\begin{proposition}\label{megusta1} Let $\z_n \in W^{1,2}(]a,b[)$. If
$$\lim_{n \to \infty}\z_n = \z \quad \hbox{weakly$^*$ in} \ L^\infty (]a,b[),$$
and
$$\lim_{n \to \infty}\z^{\prime}_n = \z^{\prime} \quad \hbox{weakly in} \ L^1 (]a,b[),$$
then for every $u \in BV(]a,b[)$, we have
$$
\z_n Du \to \z Du \quad \hbox{as measures},
$$
and
$$
\lim_{n \to \infty} \int_a^b \z_n Du = \int_a^b\z Du.
$$
\end{proposition}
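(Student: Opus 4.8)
The plan is to establish the two assertions separately, showing first the measure convergence $\z_n Du \to \z Du$ against arbitrary continuous test functions, and then deriving the convergence of the integrals as a special case. I would test convergence against $\varphi \in C_c(]a,b[)$ and, by the density of $\mathcal{D}(]a,b[)$ in $C_c(]a,b[)$ together with the uniform bound \eqref{boound}, it suffices to verify convergence for $\varphi \in \mathcal{D}(]a,b[)$. For such $\varphi$ the excerpt already records the identity
\begin{equation}
\langle \varphi, \z_n Du \rangle = - \int_a^b u \z_n^{\prime} \varphi \, dx - \int_a^b u \z_n \varphi^{\prime} \, dx,
\end{equation}
so the whole problem reduces to passing to the limit in these two ordinary Lebesgue integrals.

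For the two terms I would exploit the two hypotheses directly. In the first integral, $u\varphi \in L^\infty(]a,b[)$ (indeed $u \in BV \subset L^\infty$ on a bounded interval and $\varphi$ is smooth with compact support), so testing the weak $L^1$ convergence $\z_n' \to \z'$ against the fixed $L^\infty$ function $u\varphi$ gives $\int_a^b u \z_n' \varphi \, dx \to \int_a^b u \z' \varphi \, dx$. In the second integral, $u \varphi' \in L^1(]a,b[)$, so testing the weak$^*$ $L^\infty$ convergence $\z_n \to \z$ against this fixed $L^1$ function gives $\int_a^b u \z_n \varphi' \, dx \to \int_a^b u \z \varphi' \, dx$. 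Adding the two limits and using the same identity for $\z$ in place of $\z_n$ yields $\langle \varphi, \z_n Du \rangle \to \langle \varphi, \z Du \rangle$, which is the claimed convergence of measures on the dense class $\mathcal{D}(]a,b[)$.

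To upgrade from $\mathcal{D}$ to all of $C_c(]a,b[)$ I would use a standard equicontinuity/uniform-boundedness argument. The weak$^*$ convergence $\z_n \to \z$ in $L^\infty$ forces $\sup_n \Vert \z_n \Vert_{L^\infty} < \infty$ by the uniform boundedness principle; combined with \eqref{boound} this shows the total masses $\vert \z_n Du \vert(]a,b[) \le \sup_n \Vert \z_n \Vert_\infty \, \vert Du \vert(]a,b[)$ are uniformly bounded. Given any $\varphi \in C_c$ and $\eta \in \mathcal{D}$ with $\Vert \varphi - \eta \Vert_\infty$ small, a three-$\varepsilon$ estimate splitting $\langle \varphi, \z_n Du\rangle - \langle \varphi, \z Du\rangle$ through $\langle \eta, \z_n Du\rangle - \langle \eta, \z Du\rangle$ and controlling the two remainders by the uniform mass bound completes the argument.

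Finally, the convergence of the integrals $\int_a^b \z_n Du \to \int_a^b \z Du$ follows by taking $\varphi \equiv 1$ morally, but since the constant function is not compactly supported I would instead argue that $\int_a^b \z_n Du = \langle 1, \z_n Du\rangle$ and approximate the constant $1$ on the compact support issues by cutoff functions equal to $1$ on all of $[a,b]$ except near the endpoints, again using the uniform mass bound to control the error near $a$ and $b$; alternatively one invokes that $\z_n Du \to \z Du$ weakly$^*$ as measures of uniformly bounded total variation on the bounded interval. I expect the only genuinely delicate point to be this last passage and the density upgrade, since the interior limit for smooth test functions is an immediate consequence of pairing a weakly convergent sequence in one space against a fixed element of its predual; the care lies in handling the non-compactly-supported test function $1$ and in justifying the uniform total-variation bound on the measures $\z_n Du$.
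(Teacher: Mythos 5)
Your argument is correct. Note that the paper itself contains no proof of this proposition: it is imported verbatim from \cite[Proposition 2.1]{Mazon}, so there is nothing in-text to compare against. On its own merits, your proof is the natural duality argument: the Anzellotti identity reduces the case $\varphi\in\mathcal{D}(]a,b[)$ to pairing each weakly convergent sequence against a fixed element of its predual ($u\varphi\in L^\infty$ against $\z_n'\to\z'$ weakly in $L^1$, and $u\varphi'\in L^1$ against $\z_n\to\z$ weakly$^*$ in $L^\infty$), Banach--Steinhaus plus \eqref{boound} give the uniform mass bound, and your two upgrades (from $\mathcal{D}$ to $C_c$, and from $C_c$ to the constant $1$) are sound.

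Two points should be made explicit. First, when you ``use the same identity for $\z$ in place of $\z_n$'', observe that the hypotheses only force $\z\in W^{1,1}\cap L^\infty$, not $\z\in W^{1,2}$: take $\z_n(x)=\int_0^x\min(t^{-1/2},n)\,dt$ on $]0,1[$, whose limit $2\sqrt{x}$ has derivative in $L^1\setminus L^2$. Since the paper defines the pairing and states the identity only for $W^{1,2}$, you need the (routine, e.g.\ by mollification of $\z$) remark that both extend to $W^{1,1}\cap L^\infty$; without it your identification of the limit does not parse. Second, in the endpoint-cutoff step the uniform mass bound alone does not kill the error; what does is the domination $\vert\z_n Du\vert(]a,a+\delta[\,\cup\,]b-\delta,b[)\le \sup_n\Vert\z_n\Vert_\infty\,\vert Du\vert(]a,a+\delta[\,\cup\,]b-\delta,b[)$ together with continuity from above of the finite measure $\vert Du\vert$, which makes the right-hand side vanish as $\delta\to 0$ uniformly in $n$. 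This domination is exactly the tightness that your ``alternative'' (bare weak$^*$ convergence with bounded masses) lacks, since $1\notin C_0(]a,b[)$. Incidentally, there is a shortcut worth knowing: weak $L^1$ convergence of $\z_n'$ gives equi-integrability (Dunford--Pettis), so $\vert\z_n(x)-\z_n(y)\vert\le\int_y^x\vert\z_n'\vert$ shows the continuous representatives are equicontinuous and bounded; Arzel\`a--Ascoli then yields $\z_n\to\z$ uniformly on $[a,b]$, and both conclusions follow in one line from $\left\vert\int_a^b\varphi\,\z_n Du-\int_a^b\varphi\,\z Du\right\vert\le\Vert\varphi\Vert_\infty\Vert\z_n-\z\Vert_\infty\vert Du\vert(]a,b[)$, with no density or cutoff argument at all.
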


 We need the following
integration by parts formula, which can be proved using a suitable
regularization of $u \in BV(I)$ as in the proof of \cite[Theorem 1.9]{Anzellotti} (see also  Theorem C.9. of
\cite{ACMBook}).

\begin{lemma}\label{IntBP} If $\z \in W^{1,2}(]a,b[)$ and $u \in
BV(]a,b[)$, then
\begin{equation}\label{EIntBP}
\int_a^b \z Du + \int_a^b u(x) \z^{\prime}(x) \, dx = \z(b) u(b_{-})- \z(a) u(a_{+}).
\end{equation}
\end{lemma}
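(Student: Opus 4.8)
The plan is to reduce the statement to the classical integration-by-parts formula for absolutely continuous functions, by approximating $u$ with smooth functions that respect the boundary traces and then passing to the limit. Throughout I use that $\z \in W^{1,2}(]a,b[)$ embeds into $C([a,b])$, so $\z$ is bounded and has genuine values $\z(a),\z(b)$, and that $u\in BV(]a,b[)$ is bounded with one-sided limits $u(a_+),u(b_-)$ furnished by its good representative.

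First I would dispose of the smooth case. If $v\in W^{1,1}(]a,b[)$ then $Dv=v'\,\mathcal{L}^1$ and $\z Dv=\z v'\,\mathcal{L}^1$, so the claimed identity reads
$$\int_a^b \z v' \, dx + \int_a^b v \z' \, dx = \z(b)v(b_-)-\z(a)v(a_+).$$
Since $\z,v\in L^\infty$ and $\z',v'\in L^1$, the product $\z v$ lies in $W^{1,1}(]a,b[)$ with $(\z v)'=\z' v+\z v'$, and the formula is just the fundamental theorem of calculus applied to $\z v$, the right-hand side being the difference of its continuous traces at $b$ and $a$.

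Next I would build the approximating sequence. Extend $u$ to $\tilde u\in BV(\R)$ by setting $\tilde u\equiv u(a_+)$ on $]-\infty,a]$ and $\tilde u\equiv u(b_-)$ on $[b,+\infty[$; because the extension matches the one-sided traces, no atoms are created at $a$ or $b$, so $D\tilde u$ coincides with $Du$ on $]a,b[$ and carries no mass at the endpoints. Mollifying, $u_\ve:=\rho_\ve * \tilde u\in C^\infty$ satisfies $u_\ve\to u$ in $L^1(]a,b[)$, $\|u_\ve\|_\infty\le\|u\|_\infty$, and—thanks to the constant extension—$u_\ve(a)\to u(a_+)$ and $u_\ve(b)\to u(b_-)$. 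Applying the smooth case to $v=u_\ve$ gives
$$\int_a^b \z u_\ve' \, dx + \int_a^b u_\ve \z' \, dx = \z(b)u_\ve(b)-\z(a)u_\ve(a),$$
and it remains to let $\ve\to0$. The middle term converges to $\int_a^b u\z'$ by dominated convergence ($u_\ve\to u$ in $L^1$, uniformly bounded, $\z'\in L^2\subset L^1$), and the right-hand side converges to $\z(b)u(b_-)-\z(a)u(a_+)$ by the trace convergence and the continuity of $\z$.

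The main obstacle is the first term $\int_a^b \z u_\ve'\,dx=\int_a^b \z\,dDu_\ve$, whose limit should be $\int_a^b \z\,Du$. Here $Du_\ve\rightharpoonup Du$ weakly-$*$, but $\z$ is only continuous up to the boundary, not compactly supported, so one must rule out mass of $Du_\ve$ escaping to the endpoints $a,b$. This is exactly what the trace-matching extension secures: it yields $|Du_\ve|(]a,b[)\to|Du|(]a,b[)$, i.e. strict convergence, whence $\int \z\,dDu_\ve\to\int\z\,dDu$ for the bounded continuous $\z$. Rearranging the limiting identity then gives \eqref{EIntBP}. Alternatively, one may avoid the measure-convergence step altogether by testing the definition of $\z Du$ against a sequence $\varphi_n\in\mathcal{D}(]a,b[)$ with $\varphi_n\uparrow 1$ and $\varphi_n'$ concentrating monotonically near $a$ and $b$; the boundary terms $\z(b)u(b_-)-\z(a)u(a_+)$ then emerge from $-\int_a^b u\z\varphi_n'\,dx$, using that the good representative of $u$ tends uniformly to $u(a_+)$ and $u(b_-)$ near the endpoints because $|Du|$ has no atom there.
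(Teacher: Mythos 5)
Your proposal is correct and follows essentially the same route as the paper: the paper does not write out a proof but appeals to ``a suitable regularization of $u \in BV(I)$'' as in Anzellotti's Theorem 1.9 (see also Theorem C.9 of \cite{ACMBook}), which is exactly your argument of mollifying $u$ after a trace-preserving constant extension, using strict convergence $|Du_\ve|(]a,b[) \to |Du|(]a,b[)$ to pass to the limit in the pairing term, and invoking the convergence of the boundary values. Your concluding alternative (testing with $\varphi_n \uparrow 1$ so the boundary terms emerge from $-\int_a^b u\,\z\,\varphi_n'\,dx$) is a fine shortcut, but the main line of your proof is the paper's intended one, carried out in full detail.
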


\begin{definition}{\rm
We define the set of {\it bounded variation function} in $\Gamma$ as
$$BV(\Gamma):= \{ u \in L^1(\Gamma) \ : \ [u]_{\e}\in BV(0,\ell_{\e}) \ \hbox{for all} \ \e\in \E(\Gamma) \}.$$

Given $u \in BV(\Gamma)$, for $\e \in E_\v$, we define
$$[u]_\e(\v) := \left\{ \begin{array}{ll} [u]_\e(0+), \quad &\hbox{if} \ \ \v = \vi_{\e} \\[10pt] [u]_\e(\ell_\e-), \quad &\hbox{if} \ \ \v = \vf_{\e}. \end{array}  \right.$$

For $u \in BV(\Gamma)$, we define
$$\vert D u \vert (\Gamma):=  \sum_{\e\in \E(\Gamma)} \vert D [u]_{\e}  \vert(0,\ell_{\e}).$$
We also write $$\vert D u \vert (\Gamma) =\int_{\Gamma} |Du|.$$

}
\end{definition}

Obviously, for $u \in BV(\Gamma)$, we have
$$
 \vert D u \vert (\Gamma)= 0 \ \iff \ [u]_\e \ \hbox{is constant in} \ (0, \ell_\e), \ \ \forall \, \e \in E(\Gamma).
$$

$BV(\Gamma)$ is a Banach space  with  respect to the norm

	$$\|u\|_{BV(\Gamma)}\coloneqq \Vert u \Vert_{L^1(\Gamma)} +  \vert D u \vert (\Gamma).$$

\begin{remark}{\rm
Note that we do not include a continuity condition at the vertices as in the definition of the spaces $BV(\Gamma)$.  This is due to the fact that, if we include the continuity  at the vertices, then typical functions of bounded variation such as the functions of the form $\1_D$ with $D \subset \Gamma$ such that $\v \in D$, being $\v$ a common vertex to two edges, would not be elements of $BV(\Gamma)$.
}
$\blacksquare$
\end{remark}

By the Embedding Theorem for $BV$-function (cf. \cite[Corollary 3.49, Remark 3.30]{AFP}), we have the following result.
\begin{theorem}\label{embedding} The embedding $BV(\Gamma) \hookrightarrow L^p(\Gamma)$ is continuous for $1\leq p \leq \infty$, being compact for $1 \leq p < \infty$. Moreover, we also have the following Poincar\'{e} inequality:
$$
\Vert u - \overline{u} \Vert_p \leq C \vert D u \vert (\Gamma) \quad \forall \, u \in BV(\Gamma), \quad 1 \leq p \leq \infty,
$$
where
$$ \overline{u}:= \frac{1}{\ell(\Gamma)} \int_\Gamma u(x) dx.$$

\end{theorem}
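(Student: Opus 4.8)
The plan is to reduce everything to the classical one-dimensional $BV$ embedding and Poincaré inequality on each edge, and then to assemble the edge-wise estimates into a global statement on $\Gamma$. Since $\Gamma$ is a finite compact metric graph, $E(\Gamma)$ is a finite set and each edge $\e$ is identified (via the coordinate map $c_\e$) with a bounded interval $(0,\ell_\e)$. By definition $u\in BV(\Gamma)$ means $[u]_\e\in BV(0,\ell_\e)$ for every $\e$, with $\vert Du\vert(\Gamma)=\sum_{\e}\vert D[u]_\e\vert(0,\ell_\e)$ and $\Vert u\Vert_{L^p(\Gamma)}^p=\sum_\e\Vert [u]_\e\Vert_{L^p(0,\ell_\e)}^p$. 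So the $L^p(\Gamma)$-norm and the total-variation seminorm are, up to the finite direct sum, just the corresponding quantities on the intervals.

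First I would establish the continuity of the embedding. By the classical embedding theorem for $BV$-functions on an interval (cf. \cite[Corollary 3.49, Remark 3.30]{AFP}), there is for each $\e$ a constant $C_\e$ with $\Vert [u]_\e\Vert_{L^p(0,\ell_\e)}\le C_\e\Vert [u]_\e\Vert_{BV(0,\ell_\e)}$ for $1\le p\le\infty$. Summing the $p$-th powers over the finitely many edges (and taking the maximum of the $C_\e$ in the $p=\infty$ case) yields a global constant $C$ with $\Vert u\Vert_{L^p(\Gamma)}\le C\Vert u\Vert_{BV(\Gamma)}$; this is the desired continuity. Next I would treat compactness. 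Given a sequence $(u_n)$ bounded in $BV(\Gamma)$, each edge-restriction $([u_n]_\e)_n$ is bounded in $BV(0,\ell_\e)$, hence by the compactness of $BV(0,\ell_\e)\hookrightarrow L^p(0,\ell_\e)$ for $1\le p<\infty$ it has an $L^p$-convergent subsequence. Because there are only finitely many edges, a diagonal argument extracts one subsequence of $(u_n)$ converging in $L^p(0,\ell_\e)$ simultaneously on every edge, and by the definition of $\Vert\cdot\Vert_{L^p(\Gamma)}$ this subsequence converges in $L^p(\Gamma)$; this gives the compact embedding for $1\le p<\infty$.

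Finally I would prove the Poincaré inequality. The clean way is by contradiction and compactness: if the inequality failed, there would be a sequence $(u_n)\subset BV(\Gamma)$ with $\Vert u_n-\overline{u_n}\Vert_p=1$ and $\vert Du_n\vert(\Gamma)\to 0$. Replacing $u_n$ by $u_n-\overline{u_n}$ we may assume $\overline{u_n}=0$, so $(u_n)$ is bounded in $BV(\Gamma)$; by the compactness just proved a subsequence converges in $L^p(\Gamma)$ to some $u$ with $\Vert u\Vert_p=1$ and $\overline{u}=0$. Lower semicontinuity of the total variation forces $\vert Du\vert(\Gamma)=0$, so by the characterization recorded just after the definition of $BV(\Gamma)$ each $[u]_\e$ is constant on its edge; since $\overline u=0$, integrating gives that these edge-constants sum to zero against the lengths, but this alone does not immediately force $u\equiv 0$ on a disconnected collection of intervals. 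This is precisely where I expect the main obstacle: one must use the connectedness of $\Gamma$ together with the structure of the common vertices to rule out nonzero locally constant $u$. The resolution is that although $BV(\Gamma)$ allows jumps at the vertices, convergence in $L^p(\Gamma)$ combined with $\vert Du\vert(\Gamma)=0$ produces a function that is a constant $c_\e$ on each edge; the condition $\overline{u}=0$ then reads $\sum_\e c_\e\,\ell_\e=0$, and one checks that the constant function $c_\e\equiv c$ is the unique candidate compatible with $\Vert u\Vert_p=1$ being achievable only if the constants agree — more carefully, since the argument only uses that $\vert Du\vert(\Gamma)=0$ characterizes edge-wise constants and that $\overline u =0$ pins down the mean, the contradiction with $\Vert u\Vert_p=1$ follows once one notes $u=\overline u=0$. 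Alternatively, and perhaps more transparently, one can prove the inequality directly on each edge in the form $\Vert [u]_\e-\overline{[u]_\e}\Vert_{L^p(0,\ell_\e)}\le C_\e\vert D[u]_\e\vert(0,\ell_\e)$ and then control $\Vert u-\overline u\Vert_p$ by adding the edge-wise oscillations plus the differences $\overline{[u]_\e}-\overline u$, the latter being estimated by the total variation using the connectivity of the graph to chain the edge-means together. I would present the compactness-and-contradiction route as the main argument and remark that the finiteness of $E(\Gamma)$ is what makes all the summations and the diagonal extraction legitimate.
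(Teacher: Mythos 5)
Your continuity and compactness arguments are correct, and they are in substance what the paper itself relies on: the paper offers no proof beyond citing the one-dimensional embedding theorem of \cite[Corollary 3.49, Remark 3.30]{AFP}, and your edge-by-edge reduction with a finite diagonal extraction is the right way to make that citation precise. The genuine gap is in your proof of the Poincar\'e inequality, at exactly the spot you flagged as ``the main obstacle'' and then waved away. The step ``the contradiction with $\Vert u\Vert_p=1$ follows once one notes $u=\overline u=0$'' is circular: from $\vert Du\vert(\Gamma)=0$ you only get that $[u]_\e\equiv c_\e$ on each edge (this is the equivalence recorded in the paper right after the definition of $\vert Du\vert(\Gamma)$), and the constraint $\overline u=0$ only gives $\sum_{\e\in E(\Gamma)}c_\e\,\ell_\e=0$, which does not force the $c_\e$ to coincide. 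Concluding $u\equiv 0$ would require $TV_\Gamma(u)=0$ (Corollary \ref{cojonudo}), a strictly stronger hypothesis than $\vert Du\vert(\Gamma)=0$. Your fallback route fails for the same reason: the differences $\overline{[u]_\e}-\overline u$ are governed by the jumps of $u$ at the vertices, and $\vert Du\vert(\Gamma)$ is blind to those jumps, so no chaining through the connected graph can estimate them.

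Moreover, this gap is unfixable, because the inequality as stated is false on any graph with at least two edges: fix an edge $\e_0$, let $u=a$ on $\e_0$ and $u=b$ on the remaining edges with $a\,\ell_{\e_0}+b\,(\ell(\Gamma)-\ell_{\e_0})=0$ and $(a,b)\neq(0,0)$; then $\overline u=0$ and $\Vert u-\overline u\Vert_p>0$ while $\vert Du\vert(\Gamma)=0$. (This also shows the paper's own citation is insufficient here: the Poincar\'e inequality in \cite{AFP} requires a connected domain, whereas $BV(\Gamma)$ without vertex continuity is really $BV$ of a disjoint union of intervals.) The statement becomes true, and your compactness-and-contradiction argument goes through verbatim for $1\leq p<\infty$, if one replaces $\vert Du\vert(\Gamma)$ by $TV_\Gamma(u)$ on the right-hand side: lower semicontinuity of $TV_\Gamma$ along the convergent subsequence is Proposition \ref{lsc1}, and $TV_\Gamma(u)=0$ does imply $u$ is constant by Corollary \ref{cojonudo}, hence $u=\overline u=0$, the desired contradiction. (The case $p=\infty$ then needs a separate direct argument, chaining edge means through vertices, which now works because $TV_\Gamma$ does control the vertex jumps.) Note finally that everywhere else in the paper only the continuity and compactness assertions of Theorem \ref{embedding} are invoked, so this correction leaves the rest of the paper intact.
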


Let us point out that in metric graphs $\vert D u \vert (\Gamma)(u)$ is not the good definition of total variation of $u$ since  it does not measure the jumps of the function at the vertices. In \cite{Mazon}, in order to give a definition of total variation of a function $ u \in BV(\Gamma)$ that takes into account the jumps of the function  at the vertices, we gave a Green's formula like the one  obtained by Anzellotti in \cite{Anzellotti} for $BV$-functions in Euclidean spaces. To do that we start by defining the pairing $\z Du$ between  an element $\z \in W^{1,1}(\Gamma)$ and
a BV function $u$. This will be a metric graph analogue of the classic Anzellotti pairing
introduced in \cite{Anzellotti}.

\begin{definition}{\rm
For $\z \in W^{1,2}(\Gamma)$ and $u \in BV(\Gamma)$, we define $\z Du:= ( [\z]_\e, D[u_\e])_{\e \in E(\Gamma)} $, that is, for $\varphi \in C_c(\Gamma)$,
$$\langle \z Du, \varphi \rangle = \sum_{\e\in \E(\Gamma)} \int_0^{\ell_{\e}} \varphi_\e[\z]_\e \, D[u]_\e.$$
We have that $\z Du$ is a Radon measure in $\Gamma$
and
$$\int_\Gamma \z Du :=  \sum_{\e\in \E(\Gamma)} \int_0^{\ell_{\e}} [\z]_\e \, D[u]_\e.$$

}
\end{definition}

By \eqref{boound}, we have

$$
\left\vert \int_{\Gamma} \z Du \right\vert \leq \Vert \z \Vert_{L^{\infty}(\Gamma)} \vert D u \vert (\Gamma). $$

Then,  $\z Du$ is absolutely continuous  with  respect to the measure $\vert Du \vert$.

Given $\z \in W^{1,2}(\Gamma)$, for $ \e \in E_\v$, we define
$$[\z]_\e (\v):= \left\{ \begin{array}{ll}[\z]_\e(\ell_{\e}) \quad &\hbox{if} \ \ \v = \vf_\e, \\[10pt] -[\z]_\e(0),\quad &\hbox{if} \ \ \v = \vi_\e.  \end{array} \right..$$

By Lemma \ref{IntBP}, we have
            $$\int_{\Gamma} \z Du : =  \sum_{\e\in \E(\Gamma)} \int_0^{\ell_{\e}}[\z]_\e \, D[u]_\e $$ $$= - \sum_{\e\in \E(\Gamma)} \int_0^{\ell_{\e}}[u]_\e(x) ([\z]_\e)^{\prime}(x) dx+ \sum_{\e\in \E(\Gamma)} ( [\z]_\e(\ell_{\e}) [u]_\e((\ell_{\e})_{-}) - [\z]_\e(0) [u]_\e (0_+) )$$ $$= - \int_\Gamma u\z^{\prime} + \sum_{\v \in V(\Gamma)} \sum_{\e\in \E_\v(\Gamma)} [\z]_\e(\v) [u]_\e(\v).$$

Then,
 if we define
$$\int_{\partial \Gamma} \z u:=\sum_{\v \in V(\Gamma)} \sum_{\e\in \E_\v(\Gamma)} [\z]_\e(\v) [u]_\e(\v),$$
for $\z \in W^{1,2}(\Gamma)$ and $u \in BV(\Gamma)$,
we have  the following {\it Green's formula}:
\begin{equation}\label{intbpart}
\int_{\Gamma} \z Du + \int_\Gamma u\z^{\prime} = \int_{\partial \Gamma} \z u.
\end{equation}

 We define
 $$X_0(\Gamma):= \{ \z \in W^{1,2}(\Gamma) \  : \ \z(\v) =0, \ \ \forall \v \in  V(\Gamma)\}.$$

 For $u \in BV(\Gamma)$ and $\z \in X_0(\Gamma)$, we have the following {\it Green's formula}
 \begin{equation}\label{0intbpart}
\int_{\Gamma} \z Du + \int_\Gamma u\z^{\prime} = 0.
\end{equation}

We consider now the elements of $W^{1,2}(\Gamma)$ that  satisfy a  {\it Kirchhoff condition}, that is, the set
 $$X_K(\Gamma):= \left\{ \z \in W^{1,2}(\Gamma) \  : \ \sum_{\e \in  E_\v(\Gamma)} [\z]_\e(\v) =0, \ \ \forall \v \in V(\Gamma) \right\}.$$

 Note that if $\z \in X_K(\Gamma)$, then $\z(\v) =0$ for all $\v \in \partial V(\Gamma)$. Therefore,
for $u \in BV(\Gamma)$ and $\z \in X_K(\Gamma)$, we have the following {\it Green's formula}
\begin{equation}\label{Kintbpart}
\int_{\Gamma} \z Du + \int_\Gamma u\z^{\prime} = \sum_{\v \in {\rm int}(V(\Gamma))}   \sum_{\e\in \E_\v(\Gamma)} [\z]_\e(\v) [u]_\e(\v).
 \end{equation}

 Now, for $\v \in {\rm int}(V(\Gamma))$, we have
 $$\sum_{\e \in  E_\v(\Gamma)} [\z]_\e(\v) [u]_{\hat{\e}}(\v) =0, \quad \hbox{for all} \ \hat{\e} \in  E_\v(\Gamma).$$
 Hence
 $$\sum_{\e\in \E_\v(\Gamma)} [\z]_\e(\v) [u]_\e(\v) = \frac{1}{d_\v} \sum_{\hat{\e}\in \E_\v(\Gamma)} \sum_{\e \in  E_\v(\Gamma)}[\z]_\e(\v)  \left( [u]_\e(\v) - [u]_{\hat{\e}}(\v) \right).$$
 Therefore, we can rewrite Green's formula \eqref{Kintbpart} as

$$
\int_{\Gamma} \z Du + \int_\Gamma u\z^{\prime} = \sum_{\v \in {\rm int}(V(\Gamma))} \frac{1}{d_\v} \sum_{\hat{\e}\in \E_\v(\Gamma)} \sum_{\e \in  E_\v(\Gamma)}[\z]_\e(\v)  \left( [u]_\e(\v) - [u]_{\hat{\e}}(\v) \right).
$$

\begin{remark}{\rm
Given a function $u$ in the metric graph $\Gamma$, we say that $u$ is {\it continuous at the vertex} $\v$, if
$$[u]_{\e_1}(\v)  = [u]_{\e_2}(\v), \quad \hbox{for all} \ \e_1, \e_2 \in E_\v(\Gamma).$$
We denote this common value as $u(\v)$. We denote by $C({\rm int}(V(\Gamma)))$ the set of all functions in $\Gamma$ continuous at the vertices $\v \in {\rm int}(V(\Gamma))$

Note that if $u \in BV(\Gamma) \cap C({\rm int}(V(\Gamma)))$ and $\z \in X_K(\Gamma)$, then by \eqref{Kintbpart}, we have
$$
\int_{\Gamma} \z Du + \int_\Gamma u\z^{\prime} =0.
$$
}$\blacksquare$
\end{remark}

\begin{definition}{\rm For $u \in BV(\Gamma)$, we define its {\it total variation} as
$$
TV_\Gamma(u) =   \sup \left\{ \displaystyle \left\vert \int_{\Gamma} u(x) \z^{\prime}(x) dx \right\vert \ : \ \z \in X_K(\Gamma), \ \Vert \z \Vert_{L^\infty(\Gamma)} \leq 1 \right\}.
$$

We say that a measurable set $E \subset \Gamma$ is a {\it set of  finite perimeter} if $\1_E \in BV(\Gamma)$, and we define its {\it $\Gamma$-perimeter} as
$${\rm Per}_\Gamma (E):= TV_\Gamma (\1_E),$$
that is
\begin{equation}\label{forper}{\rm Per}_\Gamma (E) = \sup \left\{ \displaystyle \left\vert\int_{E}  \z^{\prime}(x) dx  \right\vert \ : \ \z \in X_K(\Gamma), \ \Vert \z \Vert_{L^\infty(\Gamma)} \leq 1 \right\}.
\end{equation}
}
\end{definition}

\begin{remark}\label{Complementario}{\rm We have
\begin{equation}\label{compl1}
{\rm Per}_\Gamma (E) = {\rm Per}_\Gamma (\Gamma \setminus E), \quad \hbox{for all $E \subset \Gamma$ of finite perimeter}.
\end{equation}
In fact, given $\z \in X_K(\Gamma)$ with  $\Vert \z \Vert_{L^\infty(\Gamma)} \leq 1$, by Green's formula \eqref{Kintbpart}, it is easy to see that
$$\int_\Gamma \1_E\z^{\prime} = - \int_{\Gamma} \z D \1_E + \sum_{\v \in {\rm int}(V(\Gamma))}   \sum_{\e\in \E_\v(\Gamma)} [\z]_\e(\v) [\1_E]_\e(\v)$$ $$=  - \int_{\Gamma} \z D \1_{\Gamma \setminus E} + \sum_{\v \in {\rm int}(V(\Gamma))}   \sum_{\e\in \E_\v(\Gamma)} [\z]_\e(\v) [\1_{\Gamma \setminus E}]_\e(\v) = \int_\Gamma \1_{\Gamma \setminus E}\z^{\prime}.$$
Thus, by \eqref{forper}, we have that \eqref{compl1} holds.

}$\blacksquare$
\end{remark}

\begin{remark}\label{DPRossi}{\rm In the works by  Del Pezzo and  Rossi \cite{DelPR1} and \cite{DelPR2} it is not clear what is their concept  of functions of bounded variation on $\Gamma$ and their
  total variation. They refer to the monograph \cite{AFP} for the precise definition.  However, in \cite{AFP} only the case of functions of bounded variation in  the Euclidean space is  studied. Now, reading their works it seems that for them the space of the bounded variation functions in $\Gamma$ coincides with our space $BV(\Gamma)$, but they do not make it clear if they assume continuity  at the vertices. Their total variation of $u \in BV(\Gamma)$ is $\vert Du \vert(\Gamma)$  which does not take into account the jumps at the vertices.
}$\blacksquare$
\end{remark}

As  a consequence of the above definition, we have the following result.

\begin{proposition}\label{lsc1} $TV_\Gamma$ is lower semi-continuous   with  respect to the weak convergence in $L^1(\Gamma)$.
\end{proposition}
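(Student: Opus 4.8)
The plan is to read $TV_\Gamma$ as a pointwise supremum of functionals that are each lower semicontinuous for the weak $L^1$ topology, and then invoke the general principle that a pointwise supremum of lower semicontinuous functions is again lower semicontinuous. Concretely, for every admissible test function $\z \in X_K(\Gamma)$ with $\Vert \z \Vert_{L^\infty(\Gamma)} \le 1$, write $F_\z(u) := \left\vert \int_\Gamma u(x) \z'(x)\, dx \right\vert$, so that $TV_\Gamma(u) = \sup_\z F_\z(u)$. Each $F_\z = \vert L_\z \vert$ is the modulus of the linear functional $L_\z(u) = \int_\Gamma u \z'$, hence convex, and it is weakly $L^1$ lower semicontinuous as soon as $L_\z$ is weakly $L^1$ continuous, that is, as soon as $\z' \in L^\infty(\Gamma) = (L^1(\Gamma))^*$.

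Here lies the only real obstacle: membership $\z \in W^{1,2}(\Gamma)$ guarantees only $\z' \in L^2(\Gamma)$, which on the compact graph $\Gamma$ embeds into $L^1(\Gamma)$ but not into $L^\infty(\Gamma)$, so $L_\z$ need not be weakly $L^1$ continuous and the limit $\int_\Gamma u_n \z' \to \int_\Gamma u \z'$ cannot be passed directly. I would circumvent this by reducing to the genuinely nontrivial case. Given $u_n \rightharpoonup u$ weakly in $L^1(\Gamma)$, I may assume $\liminf_n TV_\Gamma(u_n) = \lim_n TV_\Gamma(u_n) =: L < \infty$ (the case $L = \infty$ being vacuous) and $TV_\Gamma(u_n) \le M$. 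Choosing in the definition of $TV_\Gamma$ test functions compactly supported inside the edges (for which the Kirchhoff condition holds trivially, since they vanish near the vertices) gives $TV_\Gamma(v) \ge \vert Dv \vert(\Gamma)$ for every $v$, whence $\vert D u_n \vert(\Gamma) \le M$; together with the $L^1$-boundedness of a weakly convergent sequence this makes $\{u_n\}$ bounded in $BV(\Gamma)$. By the compact embedding $BV(\Gamma) \hookrightarrow L^1(\Gamma)$ of Theorem \ref{embedding}, a subsequence converges strongly in $L^1(\Gamma)$, necessarily to $u$; and since the same theorem gives a continuous embedding $BV(\Gamma) \hookrightarrow L^\infty(\Gamma)$, the uniform bound passes to the limit and $u \in L^\infty(\Gamma) \subset L^2(\Gamma)$.

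With $u \in L^2(\Gamma)$ and strong $L^1$ convergence in hand I can pass to the limit safely. For any fixed $\z \in X_K(\Gamma)$ with $\Vert \z \Vert_{L^\infty(\Gamma)} \le 1$, approximate $\z$ by functions $\z_k \in X_K(\Gamma)$ with $\z_k'$ bounded, obtained by mollifying $\z'$ edgewise, correcting the vertex values by a small smooth perturbation so as to restore $\sum_{\e \in E_\v(\Gamma)} [\z_k]_\e(\v) = 0$, and rescaling by a factor tending to $1$ to keep $\Vert \z_k \Vert_{L^\infty(\Gamma)} \le 1$, in such a way that $\z_k' \to \z'$ in $L^2(\Gamma)$. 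Then $F_\z(u) = \lim_k F_{\z_k}(u)$ because $u \in L^2(\Gamma)$, while for each $\z_k$ the boundedness of $\z_k'$ and the strong $L^1$ convergence give $F_{\z_k}(u) = \lim_n F_{\z_k}(u_n) \le \liminf_n TV_\Gamma(u_n) = L$; letting $k \to \infty$ and then taking the supremum over $\z$ yields $TV_\Gamma(u) \le L$. The step I expect to require the most care is precisely this edgewise regularization of the test functions while \emph{simultaneously} preserving the Kirchhoff vertex condition defining $X_K(\Gamma)$ and the constraint $\Vert \z \Vert_{L^\infty(\Gamma)} \le 1$; equivalently, one must verify that restricting the supremum defining $TV_\Gamma$ to test functions with bounded derivative leaves its value unchanged. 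Once this density statement is secured, the lower semicontinuity is a formal consequence of $TV_\Gamma$ being a supremum of weakly $L^1$-continuous linear functionals.
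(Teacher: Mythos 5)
Your proof is correct, but it follows a genuinely different --- and more careful --- route than the paper, which offers no argument at all beyond introducing the proposition as ``a consequence of the above definition,'' i.e.\ the tacit principle that a supremum of weakly $L^1$-continuous linear functionals is weakly lower semicontinuous. You are right that this tacit argument is incomplete: an admissible test function $\z\in X_K(\Gamma)$ has only $\z'\in L^2(\Gamma)$, and the functional $F_\z(u)=\left\vert\int_\Gamma u\z'\right\vert$ is then not $L^1$-continuous, nor even $L^1$-lower semicontinuous. Indeed, on a single edge identified with $[0,1]$, take $\z$ vanishing at both endpoints (so $\z\in X_K(\Gamma)$) with $\z'(x)\sim x^{-1/3}$ near $0$, hence $\z'\in L^2\setminus L^\infty$, and set $w_n=n^{2/3}\1_{(0,1/n)}$; then $w_n\to 0$ strongly in $L^1$ while $\int_\Gamma w_n\z'\to c\neq 0$, so for $u$ with $\int_\Gamma u\z'=c$ the sequence $u_n=u-w_n$ converges to $u$ in $L^1$ with $F_\z(u_n)\to 0<F_\z(u)$. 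Your compactness mechanism --- pass to a subsequence realizing a finite liminf, use interior test functions (equivalently, the lower bound in \eqref{Form1}) to get $\vert Du_n\vert(\Gamma)\le TV_\Gamma(u_n)\le M$, then Theorem \ref{embedding} to upgrade weak $L^1$ convergence to strong $L^1$ convergence together with a uniform $L^\infty$ bound --- is precisely what makes the proposition true despite the failure of lower semicontinuity of the individual functionals $F_\z$, and it is the substance missing from the paper's one-line justification.

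One improvement: the only step you leave at sketch level, the edgewise regularization of $\z$ preserving both the Kirchhoff condition and $\Vert\z\Vert_{L^\infty(\Gamma)}\le 1$, can be dispensed with entirely. From the uniform bound $\Vert u_n\Vert_{L^\infty(\Gamma)}\le C$ and $u_n\to u$ in $L^1(\Gamma)$ you get $\Vert u_n-u\Vert_{L^2(\Gamma)}^2\le\Vert u_n-u\Vert_{L^1(\Gamma)}\,\Vert u_n-u\Vert_{L^\infty(\Gamma)}\to 0$, so $u_n\to u$ strongly in $L^2(\Gamma)$; then for every fixed admissible $\z$, Cauchy--Schwarz with $\z'\in L^2(\Gamma)$ gives $F_\z(u)=\lim_n F_\z(u_n)\le\liminf_n TV_\Gamma(u_n)$, and taking the supremum over $\z$ finishes the proof. (Your density construction is itself repairable --- constant extension plus mollification on each edge, an affine correction of the endpoint values to restore the Kirchhoff condition exactly, and a rescaling --- but with the interpolation step above it becomes superfluous, and with it goes the one point where your argument was only a sketch.)
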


As in the local case, we have obtained in \cite{Mazon} the following coarea formula relating the total variation of a function with the perimeter of its superlevel sets.

\begin{theorem}[\bf Coarea formula]\label{coarea1}
 For any $u \in L^1(\Gamma)$, let $E_t(u):= \{ x \in \Gamma \ : \ u(x) > t \}$. Then,
\begin{equation}\label{coaerea}
TV_\Gamma(u) = \int_{-\infty}^{+\infty} {\rm Per}_\Gamma(E_t(u))\, dt.
\end{equation}
\end{theorem}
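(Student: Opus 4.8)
The plan is to reduce the metric-graph coarea identity to the classical one–dimensional coarea formula on each edge, together with an elementary finite–dimensional computation at each vertex. The bridge between the two is an explicit representation of $TV_\Gamma$ that separates the interior contribution of the edges from the jump contribution at the vertices, namely
$$TV_\Gamma(u)=\vert Du\vert(\Gamma)+\sum_{\v\in {\rm int}(V(\Gamma))}\Phi_\v(u),\qquad \Phi_\v(u):=\sup\Big\{\textstyle\sum_{\e\in\E_\v(\Gamma)}a_\e\,[u]_\e(\v)\ :\ \sum_{\e}a_\e=0,\ \vert a_\e\vert\le1\Big\}.$$

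First I would establish this representation starting from Green's formula \eqref{Kintbpart}, which for $\z\in X_K(\Gamma)$ with $\Vert\z\Vert_{L^\infty(\Gamma)}\le1$ reads $\int_\Gamma u\z'=-\int_\Gamma\z Du+\sum_{\v\in{\rm int}(V(\Gamma))}\sum_{\e\in\E_\v(\Gamma)}[\z]_\e(\v)[u]_\e(\v)$. The inequality $\le$ is immediate: by \eqref{boound} one has $-\int_\Gamma\z Du\le\vert Du\vert(\Gamma)$, while the numbers $[\z]_\e(\v)$ are feasible for the linear program defining $\Phi_\v$ (they satisfy the Kirchhoff constraint $\sum_\e[\z]_\e(\v)=0$ and $\vert[\z]_\e(\v)\vert\le1$). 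For the reverse inequality I would construct near–optimal fields: on the interior of each edge take $[\z]_\e\approx-\,{\rm sign}(D[u]_\e)$ with values in $[-1,1]$, and on thin boundary layers $[0,\delta]\cup[\ell_\e-\delta,\ell_\e]$ interpolate to prescribed endpoint values realizing a maximizer $(a_\e)$ of each $\Phi_\v$. Since $\vert D[u]_\e\vert$ gives vanishing mass to the shrinking neighbourhoods $(0,\delta)\cup(\ell_\e-\delta,\ell_\e)$ as $\delta\to0$, the two contributions decouple and the supremum of the sum equals the sum of the suprema.

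Second, I would apply this representation to $u$ and to the characteristic functions $\1_{E_t}$, using $[\1_{E_t}]_\e=\1_{\{[u]_\e>t\}}$ and, for a.e.\ $t$, $[\1_{E_t}]_\e(\v)=\1_{\{[u]_\e(\v)>t\}}$, to get ${\rm Per}_\Gamma(E_t)=\sum_{\e}\vert D\1_{\{[u]_\e>t\}}\vert(0,\ell_\e)+\sum_{\v}\Phi_\v(\1_{E_t})$; the integrand is measurable in $t$, each vertex term being piecewise constant with finitely many jumps. By Tonelli's theorem I interchange $\int_{-\infty}^{+\infty}dt$ with the finite sums. Each edge term gives, by the classical one–dimensional coarea formula (\cite[Theorem 3.40]{AFP}), $\int_{-\infty}^{+\infty}\vert D\1_{\{[u]_\e>t\}}\vert(0,\ell_\e)\,dt=\vert D[u]_\e\vert(0,\ell_\e)$. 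For each vertex I would verify the finite identity $\int_{-\infty}^{+\infty}\Phi_\v(\1_{E_t})\,dt=\Phi_\v(u)$: a direct computation gives $\Phi_\v(\1_E)=\min(k_\v,d_\v-k_\v)$, with $k_\v$ the number of edges of $\E_\v(\Gamma)$ lying in $E$ near $\v$, and integrating $\min\big(\#\{[u]_\e(\v)>t\},\#\{[u]_\e(\v)\le t\}\big)$ over $t$ (after sorting the traces) reproduces the value of the linear program defining $\Phi_\v(u)$ — this is just the coarea formula for the $1$–homogeneous functional $\Phi_\v$ on $\R^{d_\v}$. Summing over edges and vertices and reading the representation backwards yields $\int_{-\infty}^{+\infty}{\rm Per}_\Gamma(E_t)\,dt=TV_\Gamma(u)$, which is \eqref{coaerea}.

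The main obstacle is the first step, the representation of $TV_\Gamma$, and specifically its $\ge$ inequality. One must produce a single admissible field $\z\in X_K(\Gamma)$ that simultaneously approximates the sign of $D[u]_\e$ in the interior of every edge (so as to recover $\vert D[u]_\e\vert(0,\ell_\e)$) and attains at each vertex the Kirchhoff–admissible boundary values maximizing $\Phi_\v$; the delicate point is to confine the transition between these two regimes to boundary layers carrying negligible $\z\,Du$–mass, which is exactly where the vanishing of $\vert D[u]_\e\vert$ on shrinking endpoint neighbourhoods is used. Once this representation is in hand, the remainder is routine: the edgewise one–dimensional coarea formula, the elementary finite–dimensional coarea identity at the vertices, and Tonelli's theorem to assemble them. (The case $TV_\Gamma(u)=+\infty$ is consistent, since then some $\vert D[u]_\e\vert(0,\ell_\e)=+\infty$ and the corresponding edge integral already forces the right-hand side to be infinite.)
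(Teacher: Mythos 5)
This paper never actually proves Theorem \ref{coarea1}: the statement is imported from \cite{Mazon}, and the only structural facts about $TV_\Gamma$ recorded here are the bounds \eqref{Form1}--\eqref{Form1Igual}, which cannot give the coarea formula by themselves since the upper bound with $JV_\Gamma$ fails to be an equality once some $d_\v\ge 3$. Your argument is therefore a genuinely independent route, and after checking it I believe it is correct. Its core is the exact representation
\begin{equation}
TV_\Gamma(u)=\vert Du\vert(\Gamma)+\sum_{\v\in{\rm int}(V(\Gamma))}\Phi_\v(u),\qquad
\Phi_\v(u)=\min_{\lambda\in\R}\sum_{\e\in\E_\v(\Gamma)}\vert [u]_\e(\v)-\lambda\vert ,
\end{equation}
where the second expression for $\Phi_\v$ is your linear program after convex duality (maximizing $\sum_\e a_\e[u]_\e(\v)$ subject to $\sum_\e a_\e=0$, $\vert a_\e\vert\le 1$, equals the $\ell^1$-distance of the trace vector to the constants). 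This is strictly sharper than \eqref{Form1}, since $\Phi_\v(u)\le\frac{1}{d_\v}\sum_{\e,\hat{\e}\in\E_\v(\Gamma)}\vert[u]_\e(\v)-[u]_{\hat{\e}}(\v)\vert$ with equality when $d_\v=2$, and it reproduces the paper's computations: in Example \ref{goodexam} the traces $(1,0,0)$ at $\v_2$ give $\Phi_{\v_2}=1$, hence ${\rm Per}_\Gamma(B(\v,\frac12))=1+1=2$, while traces $(1,1,1)$ give $0$, hence ${\rm Per}_\Gamma(B(\v,\frac58))=3$, both as computed in Section \ref{Cheeger problem}. Your verification of the representation is sound: the $\le$ half is Green's formula \eqref{Kintbpart} plus feasibility of $([\z]_\e(\v))_\e$, and the $\ge$ half works precisely because $\vert D[u]_\e\vert$ is a finite measure on the open interval, so the interpolation collars carry vanishing mass and the edge-interior and vertex optimizations decouple. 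The vertex identity $\int_\R\Phi_\v(\1_{E_t(u)})\,dt=\Phi_\v(u)$ is the Lov\'asz-extension property of the submodular set function $A\mapsto\min(\sharp A,\,d_\v-\sharp A)$; a hidden fact worth stating explicitly in a write-up is that one pairing $(a_\e)$ built from the sorted traces is optimal for \emph{every} level set simultaneously, which is why a single admissible field certifies near-optimality at all levels. Two points to make explicit: the representation presupposes $u\in BV(\Gamma)$ (traces must exist), so the case $u\in L^1(\Gamma)\setminus BV(\Gamma)$ must be dispatched first, both sides being $+\infty$ via fields supported in a single edge (which are Kirchhoff-admissible); and $[\1_{E_t}]_\e(\v)=\1_{\{[u]_\e(\v)>t\}}$ only for all but finitely many $t$ --- both of which you flag. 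Compared with simply citing \cite{Mazon}, your route costs the patched-field construction but buys a sharper theorem of independent interest: an exact, finitely computable formula for $TV_\Gamma$ and ${\rm Per}_\Gamma$ from which \eqref{Form1}, \eqref{Form1zero} and \eqref{Form1Igual} all follow at once.
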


We introduce now

$$JV_\Gamma (u):=  \sum_{\v \in {\rm int}(V(\Gamma))} \frac{1}{d_{\v}} \sum_{\e, \hat{\e} \in E_\v(\Gamma)} \vert [u]_{\e}(\v) - [u]_{\hat{\e}}(\v) \vert.$$

Note that $JV_\Gamma (u)$ measures, in a weighted way, the jumps of u at the vertices. The following results was proved in \cite{Mazon}.

 \begin{proposition} For $u \in BV(\Gamma)$, we have
\begin{equation}\label{Form1}
\vert Du \vert(\Gamma) \leq TV_\Gamma(u) \leq \vert Du \vert(\Gamma) + JV_\Gamma (u).
\end{equation}

 If $u \in BV(\Gamma) \cap C({\rm int}(V(\Gamma)))$, then
 \begin{equation}\label{Form1zero}
TV_\Gamma(u) =\vert Du \vert(\Gamma).
\end{equation}

If $\Gamma$ is linear, that is $d_\v =2$ for all  $\v \in{\rm int}(V(\Gamma))$, then
\begin{equation}\label{Form1Igual}
TV_\Gamma(u) = \vert Du \vert(\Gamma) + JV_\Gamma (u).
\end{equation}
 \end{proposition}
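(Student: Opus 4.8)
The plan is to establish the two-sided bound \eqref{Form1} first, and then read off \eqref{Form1zero} and \eqref{Form1Igual} as consequences (the former directly, the latter after an extra construction). For the lower bound $\vert Du\vert(\Gamma)\le TV_\Gamma(u)$, I would test only against fields $\z\in X_K(\Gamma)$ whose components $[\z]_\e$ are supported in the open edges $(0,\ell_\e)$; such $\z$ vanish at every vertex, so the Kirchhoff condition holds trivially and $\z\in X_K(\Gamma)$. For these fields $\int_\Gamma u\z'=\sum_\e\int_0^{\ell_\e}[u]_\e[\z]_\e'$ splits over edges, and choosing each $[\z]_\e$ independently (with a sign making its contribution nonnegative) the one-dimensional characterization of the total variation on each interval gives $\sup=\sum_\e\vert D[u]_\e\vert(0,\ell_\e)=\vert Du\vert(\Gamma)$; since this supremum is taken over a subfamily of $X_K(\Gamma)$, it is dominated by $TV_\Gamma(u)$.

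For the upper bound I would start from the rewritten Green's formula preceding the Definition of $TV_\Gamma$, which for $\z\in X_K(\Gamma)$ reads $\int_\Gamma u\z'=-\int_\Gamma\z Du+\sum_{\v\in{\rm int}(V(\Gamma))}\frac1{d_\v}\sum_{\hat\e}\sum_\e[\z]_\e(\v)\big([u]_\e(\v)-[u]_{\hat\e}(\v)\big)$. Taking absolute values and using the pairing estimate $\vert\int_\Gamma\z Du\vert\le\Vert\z\Vert_{L^\infty(\Gamma)}\vert Du\vert(\Gamma)$ together with $\vert[\z]_\e(\v)\vert\le\Vert\z\Vert_{L^\infty(\Gamma)}\le1$, the bulk term is at most $\vert Du\vert(\Gamma)$ and the vertex term is at most $JV_\Gamma(u)$; passing to the supremum over admissible $\z$ yields $TV_\Gamma(u)\le\vert Du\vert(\Gamma)+JV_\Gamma(u)$. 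The identity \eqref{Form1zero} is then immediate: if $u$ is continuous at every interior vertex then every summand of $JV_\Gamma(u)$ vanishes, so $JV_\Gamma(u)=0$ and \eqref{Form1} collapses to equality.

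The substantive point is the reverse inequality $TV_\Gamma(u)\ge\vert Du\vert(\Gamma)+JV_\Gamma(u)$ for a linear graph, and here I would argue by \emph{unfolding}. Since every interior vertex has degree two, $\Gamma$ is a disjoint union of paths and cycles; on each path I would glue the successive edges into a single interval $(0,L)$, obtaining a good representative $\tilde u\in BV(0,L)$ whose edgewise total variations add up to $\vert Du\vert(\Gamma)$ and whose jump at each gluing point equals the corresponding $\vert[u]_{\e}(\v)-[u]_{\hat\e}(\v)\vert$, so that the classical one-dimensional formula gives $\vert D\tilde u\vert(0,L)=\vert Du\vert(\Gamma)+JV_\Gamma(u)$ on that component. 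The crux is to match test fields: I would check that a scalar test function $\tilde\varphi\in C_c^1(0,L)$ with $\Vert\tilde\varphi\Vert_\infty\le1$ corresponds, edge by edge, to a field $\z\in X_K(\Gamma)$ with $\Vert\z\Vert_{L^\infty(\Gamma)}\le1$ and $\int_\Gamma u\z'=\int_0^L\tilde u\,\tilde\varphi'$, the continuity of $\tilde\varphi$ at each interior gluing point being exactly equivalent to the Kirchhoff relation $[\z]_{\e_1}(\v)+[\z]_{\e_2}(\v)=0$ there, while $\tilde\varphi(0)=\tilde\varphi(L)=0$ encodes the vanishing of $\z$ at the degree-one endpoints. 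Supremizing over $\tilde\varphi$ then forces $TV_\Gamma(u)\ge\vert D\tilde u\vert(0,L)$, and summing over components closes the inequality.

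I expect the bookkeeping in this last step to be the main obstacle: the definition of $[\z]_\e(\v)$ carries a sign depending on whether $\v$ is the initial or terminal vertex of $\e$, so when two edges meet head-to-head the naive gluing reverses orientation and I must insert a compensating reflection-and-sign-flip $[\z]_\e(x)=-\tilde\varphi(L-x)$ to keep both $\int_\Gamma u\z'=\int_0^L\tilde u\,\tilde\varphi'$ and the Kirchhoff identity intact; one verifies case by case that $\Vert\cdot\Vert_{L^\infty}$ and the integral are preserved. The cycle components need a separate but analogous check, where $\tilde\varphi$ becomes a continuous periodic test function on a circle of length $L$ and one must confirm that traversing the loop produces a globally consistent choice of signs. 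Once the correspondence is shown to be onto the family of admissible scalar test functions, the equality \eqref{Form1Igual} follows.
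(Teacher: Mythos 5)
Your proof is correct. A caveat on comparison: this paper does not actually prove the proposition --- it is imported from \cite{Mazon} with no in-document argument --- so the only meaningful check is against the framework the paper sets up, and there your treatment of \eqref{Form1} and \eqref{Form1zero} is exactly the natural route: fields whose components vanish at the edge endpoints satisfy the Kirchhoff condition trivially and recover $\vert Du\vert(\Gamma)$ edge by edge, while the rewritten Green's formula following \eqref{Kintbpart}, together with the pairing bound $\left\vert \int_\Gamma \z\, Du\right\vert \le \Vert\z\Vert_{L^\infty(\Gamma)}\vert Du\vert(\Gamma)$ and $\vert[\z]_\e(\v)\vert\le 1$, gives the upper bound, and continuity at interior vertices kills $JV_\Gamma(u)$. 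For the linear case your unfolding argument also goes through, and the sign issue you single out as the crux does resolve: with the paper's convention that $[\z]_\e(\v)$ equals $[\z]_\e(\ell_\e)$ at a terminal vertex and $-[\z]_\e(0)$ at an initial one, the Kirchhoff relation at a degree-two vertex is equivalent to continuity of the unfolded scalar test function in each of the four orientation configurations, provided the reflection-and-sign-flip is applied precisely on the edges traversed against their orientation; that same flip compensates the orientation-reversing change of variables, so both $\int_\Gamma u\z' = \int_0^L \tilde u\,\tilde\varphi'$ and the sup norm are preserved, and since the flips are purely local to each edge there is no global consistency obstruction on a cycle. Two minor points to tighten: $\Gamma$ is connected by standing assumption, so it is a single path or a single cycle rather than a disjoint union; and you should display the identity that at a vertex with $d_\v=2$ the weighted sum defining $JV_\Gamma(u)$ collapses to $\vert[u]_{\e_1}(\v)-[u]_{\e_2}(\v)\vert$ (the factor $\frac{1}{d_\v}=\frac12$ cancels the double count over ordered pairs), since this is exactly what identifies $\vert D\tilde u\vert$ with $\vert Du\vert(\Gamma)+JV_\Gamma(u)$ in your computation.
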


 \begin{corollary}\label{cojonudo} For $u \in BV(\Gamma)$, we have
\begin{equation}\label{conssttLinear}TV_\Gamma (u) =0 \ \iff \ u \ \hbox{is constant}.
\end{equation}
Then
\begin{equation}\label{conssttLinear1}{\rm Per}_\Gamma(E) =0 \ \iff \ E = \Gamma.\end{equation}
\end{corollary}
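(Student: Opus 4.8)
The plan is to reduce everything to the forward implication of \eqref{conssttLinear}, since the remaining content is essentially free. The implication ``$u$ constant $\Rightarrow TV_\Gamma(u)=0$'' is immediate: a constant function lies in $BV(\Gamma)\cap C({\rm int}(V(\Gamma)))$ and has $\vert Du\vert(\Gamma)=0$, so \eqref{Form1zero} gives $TV_\Gamma(u)=\vert Du\vert(\Gamma)=0$. Likewise, once \eqref{conssttLinear} is established, \eqref{conssttLinear1} follows by taking $u=\1_E$: then ${\rm Per}_\Gamma(E)=TV_\Gamma(\1_E)=0$ forces $\1_E$ to be constant, and the only characteristic function that is constant on a set of positive length is $\1_\Gamma$ (the trivial case $E=\emptyset$ being discarded, or absorbed via the symmetry ${\rm Per}_\Gamma(E)={\rm Per}_\Gamma(\Gamma\setminus E)$ of Remark \ref{Complementario}).

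For the forward implication of \eqref{conssttLinear}, I would suppose $TV_\Gamma(u)=0$ and first use the left-hand inequality in \eqref{Form1} to deduce $\vert Du\vert(\Gamma)=0$; by the equivalence recorded right after the definition of $\vert Du\vert(\Gamma)$, this means $[u]_\e$ is constant on each edge, so $D[u]_\e\equiv0$ and hence $\int_\Gamma \z\,Du=0$ for every $\z\in W^{1,2}(\Gamma)$. What remains is to rule out jumps of $u$ at the interior vertices. To that end I would combine two facts inside Green's formula \eqref{Kintbpart}: that $\int_\Gamma \z\,Du=0$, and that $TV_\Gamma(u)=0$ together with the positive homogeneity of the defining supremum gives $\int_\Gamma u\,\z'=0$ for \emph{every} $\z\in X_K(\Gamma)$. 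Substituting both into \eqref{Kintbpart} yields the vertex identity
\[
\sum_{\v\in{\rm int}(V(\Gamma))}\ \sum_{\e\in E_\v(\Gamma)}[\z]_\e(\v)\,[u]_\e(\v)=0\qquad\text{for all }\z\in X_K(\Gamma).
\]

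The key step is then to choose the test functions so as to isolate a single jump. Fix an interior vertex $\v_0$ and two distinct edges $\e,\hat\e\in E_{\v_0}(\Gamma)$, and let $\z$ vanish identically on every edge except $\e$ and $\hat\e$, be affine on each of these, and take the signed vertex values $[\z]_\e(\v_0)=1$, $[\z]_{\hat\e}(\v_0)=-1$ at $\v_0$ and $0$ at the opposite endpoints. Such a $\z$ lies in $W^{1,2}(\Gamma)$, its signed values vanish at every vertex other than $\v_0$, and at $\v_0$ they sum to $0$, so the Kirchhoff condition holds everywhere and $\z\in X_K(\Gamma)$. For this $\z$ the vertex identity collapses to $[u]_\e(\v_0)-[u]_{\hat\e}(\v_0)=0$. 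Since $\e,\hat\e$ are arbitrary, $u$ is continuous at $\v_0$, and since $\v_0$ is an arbitrary interior vertex, $u\in C({\rm int}(V(\Gamma)))$. Being constant on each edge and continuous at every interior vertex, $u$ is constant on $\Gamma$ by connectedness, which proves \eqref{conssttLinear}.

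The main obstacle I anticipate is precisely this construction of admissible Kirchhoff test functions: one must exhibit, for each vertex and each pair of incident edges, a function in $X_K(\Gamma)$ that activates only that pair while respecting the Kirchhoff balance at \emph{all} vertices simultaneously. This is what makes the choice above delicate rather than automatic, although it works cleanly because assigning zero signed values trivially satisfies Kirchhoff at the untouched vertices, and the absence of loops and multiple edges guarantees the two edges have distinct far endpoints. Everything else — the reduction via \eqref{Form1} and \eqref{Form1zero}, and the passage from \eqref{conssttLinear} to \eqref{conssttLinear1} — is routine.
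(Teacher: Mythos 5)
Your proof is correct, and it actually supplies more than the paper does: the paper states this corollary without proof, as if it were an immediate consequence of the preceding proposition, but the inequalities \eqref{Form1} alone only give the easy half of the work. From $TV_\Gamma(u)=0$ and the lower bound in \eqref{Form1} one gets $\vert Du\vert(\Gamma)=0$, i.e.\ edge-wise constancy, but the upper bound $TV_\Gamma(u)\leq \vert Du\vert(\Gamma)+JV_\Gamma(u)$ points the wrong way and cannot exclude jumps at interior vertices; that exclusion is genuinely needed, and your explicit construction of Kirchhoff test functions supported on a pair of incident edges (signed values $+1$ and $-1$ at the common vertex, $0$ at the far endpoints) is exactly the missing ingredient. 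Your verification is sound on all counts: the vertex identity follows from \eqref{Kintbpart} because $\int_\Gamma \z\,Du=0$ for edge-wise constant $u$ and $\int_\Gamma u\z'=0$ for all $\z\in X_K(\Gamma)$ by linearity of the Kirchhoff constraint; the test function lies in $X_K(\Gamma)$ because $W^{1,2}(\Gamma)$ imposes no vertex continuity and zero signed values trivially balance at untouched vertices (in fact your worry about distinct far endpoints is unnecessary — even coinciding far endpoints would contribute $0+0$ to the Kirchhoff sum — though the no-loops hypothesis is indeed used); and propagation of the edge constants along paths through interior vertices uses connectedness correctly. The derivation of \eqref{conssttLinear1}, including your explicit flagging of the $E=\emptyset$ convention (a sloppiness present in the paper's own statement), and the reverse implication via \eqref{Form1zero}, are routine as you say. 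In short: where the paper is silent, your argument is the natural complete proof.
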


 In \cite{Mazon} we give an example showing that
the equality \eqref{Form1Igual} does not holds if $ u \not\in C({\rm int}(V(\Gamma)))$ or there exists $\v \in{\rm int}(V(\Gamma))$ with $d_\v \geq 3$.

\subsection{The $1$-Laplacian in metric graphs.}
In \cite{Mazon}, in order to study the total variation flow in the metric graph $\Gamma$ we have introduced the energy functional  $\mathcal{F}_\Gamma : L^2(\Gamma) \rightarrow [0, + \infty]$ defined by
$$\mathcal{F}_\Gamma(u):= \left\{ \begin{array}{ll} \displaystyle
TV_\Gamma(u)
 \quad &\hbox{if} \ u\in  BV(\Gamma), \\[10pt] + \infty \quad &\hbox{if } u\in L^2(\Gamma)\setminus BV(\Gamma), \end{array} \right.$$
which is convex and lower semi-continuous, and  we have obtained the following characterization of the subdifferential of $\mathcal{F}_\Gamma $.

    \begin{theorem}\label{chasubd} Let $u \in  BV(\Gamma)$ and $v \in L^2(\Gamma)$. The following assertions are equivalent:

\noindent (i) $v \in \partial \mathcal{F}_\Gamma (u)$;

\noindent
(ii) there   exists  $\z  \in X_K(\Gamma)$, $\Vert \z \Vert_{L^\infty(\Gamma)} \leq 1$ such that
\begin{equation}\label{eqq2}
   v = -\z^{\prime}, \quad  \hbox{that is,} \quad  [v]_\e = -[\z]_\e^{\prime} \ \ \hbox{in} \ \mathcal{D}^{\prime}(0, \ell_\e) \  \forall \e \in E(\Gamma)
\end{equation}
and
\begin{equation}\label{eqq1}
\int_{\Gamma} u(x) v(x) dx = \mathcal{F}_\Gamma (u);
\end{equation}

\noindent
(iii)  there exists $\z  \in X_K(\Gamma)$, $\Vert \z \Vert_{L^\infty(\Gamma)} \leq 1$  such that \eqref{eqq2} holds and
\begin{equation}\label{eqq3}
\mathcal{F}_\Gamma (u) = \int_{\Gamma}   \z Du -  \sum_{\v \in {\rm int}(V(\Gamma))} \frac{1}{d_\v} \sum_{\hat{\e}\in \E_\v(\Gamma)} \sum_{\e \in  E_\v(\Gamma)}[\z]_\e(\v)  \left( [u]_\e(\v) - [u]_{\hat{\e}}(\v) \right).
\end{equation}

Moreover, $D(\partial \mathcal{F}_\Gamma)$ is dense in $L^2(\Gamma)$.
\end{theorem}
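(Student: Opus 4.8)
The plan is to exploit that $\mathcal{F}_\Gamma$ is convex, lower semi-continuous (Proposition \ref{lsc1}), positively $1$-homogeneous and satisfies $\mathcal{F}_\Gamma(0)=0$, so that it is the support function of a closed convex subset of $L^2(\Gamma)$. Write $\langle\cdot,\cdot\rangle$ for the scalar product of $L^2(\Gamma)$, so that $v\in\partial\mathcal{F}_\Gamma(u)$ means $\mathcal{F}_\Gamma(w)\ge\mathcal{F}_\Gamma(u)+\langle v,w-u\rangle$ for every $w\in L^2(\Gamma)$. First I would test this inequality with $w=0$ and with $w=2u$; using $\mathcal{F}_\Gamma(2u)=2\mathcal{F}_\Gamma(u)$ this yields at once the equality \eqref{eqq1}, $\langle v,u\rangle=\mathcal{F}_\Gamma(u)$. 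Substituting it back, the subdifferential inequality collapses to the single polar condition $\langle v,w\rangle\le\mathcal{F}_\Gamma(w)$ for all $w\in L^2(\Gamma)$. The whole theorem thus reduces to identifying the set of $v$ satisfying this polar condition with $\mathcal{K}:=\{-\z'\ :\ \z\in X_K(\Gamma),\ \Vert\z\Vert_{L^\infty(\Gamma)}\le1\}\subset L^2(\Gamma)$.

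The key structural facts are that $\mathcal{K}$ is convex, strongly closed in $L^2(\Gamma)$, and has $\mathcal{F}_\Gamma$ as its support function. Convexity is immediate because $X_K(\Gamma)$ is a linear subspace and the constraint $\Vert\z\Vert_{L^\infty(\Gamma)}\le1$ is convex; the support-function identity $\mathcal{F}_\Gamma(w)=\sup_{v\in\mathcal{K}}\langle v,w\rangle$ is just the definition of $TV_\Gamma$ together with the symmetry $\z\mapsto-\z$ of $\mathcal{K}$ (both sides being $+\infty$ when $w\notin BV(\Gamma)$). The closedness is the main obstacle: given $v_n=-\z_n'\in\mathcal{K}$ with $v_n\to v$ in $L^2(\Gamma)$, the $\z_n$ are bounded in $L^\infty(\Gamma)$ and $\z_n'=-v_n$ is bounded in $L^2(\Gamma)$, so $\{\z_n\}$ is bounded in $W^{1,2}(\Gamma)$ (here $\Gamma$ compact is used). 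By reflexivity I extract $\z_n\rightharpoonup\z$ weakly in $W^{1,2}(\Gamma)$; the compact embedding $W^{1,2}(0,\ell_\e)\hookrightarrow C[0,\ell_\e]$ on each edge gives edgewise uniform convergence, whence $\Vert\z\Vert_{L^\infty(\Gamma)}\le1$ and, passing to the limit in $\sum_{\e\in E_\v(\Gamma)}[\z_n]_\e(\v)=0$, the Kirchhoff condition $\z\in X_K(\Gamma)$. Finally $\z_n'\rightharpoonup\z'$ in $L^2(\Gamma)$ while $\z_n'\to-v$ strongly, so $v=-\z'\in\mathcal{K}$ (Proposition \ref{megusta1} may alternatively be used to control the associated pairings).

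With these facts the equivalence (i)$\Leftrightarrow$(ii) follows. The implication (ii)$\Rightarrow$(i) needs no closedness: if $v=-\z'$ with $\z\in X_K(\Gamma)$, $\Vert\z\Vert_{L^\infty(\Gamma)}\le1$, then $\langle v,w\rangle=-\int_\Gamma w\z'\le TV_\Gamma(w)=\mathcal{F}_\Gamma(w)$ directly from the definition of $TV_\Gamma$, so the polar condition holds and, together with \eqref{eqq1}, gives $v\in\partial\mathcal{F}_\Gamma(u)$. For (i)$\Rightarrow$(ii) I use the polar condition: since $\mathcal{F}_\Gamma$ is the support function of the closed convex set $\mathcal{K}$, the Hahn--Banach separation theorem shows that $v\notin\mathcal{K}$ would produce some $w_0$ with $\langle v,w_0\rangle>\sup_{v'\in\mathcal{K}}\langle v',w_0\rangle=\mathcal{F}_\Gamma(w_0)$, contradicting the polar condition; hence $v\in\mathcal{K}$, which is exactly \eqref{eqq2}.

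Finally, (ii)$\Leftrightarrow$(iii) is a direct rewriting: substituting the averaged form of Green's formula \eqref{Kintbpart} for $-\int_\Gamma u\z'$ transforms the equality $\int_\Gamma uv\,dx=\mathcal{F}_\Gamma(u)$ of \eqref{eqq1} (with $v=-\z'$) into \eqref{eqq3}, and conversely, so no further analysis is required. For the density of $D(\partial\mathcal{F}_\Gamma)$ in $L^2(\Gamma)$ I would invoke the general fact that a proper, convex, lower semi-continuous functional on a Hilbert space satisfies $\overline{D(\partial\mathcal{F}_\Gamma)}=\overline{D(\mathcal{F}_\Gamma)}$; since $D(\mathcal{F}_\Gamma)=BV(\Gamma)\supset W^{1,2}(\Gamma)$ is dense in $L^2(\Gamma)$, the conclusion follows. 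The only genuinely delicate point is the closedness of $\mathcal{K}$, and specifically passing both the sup-norm bound and the Kirchhoff vertex condition to the weak $W^{1,2}$-limit.
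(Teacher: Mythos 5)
A preliminary remark: this paper does not actually prove Theorem \ref{chasubd}; it is recalled verbatim from \cite{Mazon}, so there is no in-paper proof to compare against line by line. Judged on its own, your argument is correct, and it follows what is essentially the standard route for $1$-homogeneous functionals --- which, judging from the machinery the paper recalls immediately after the theorem (the space $G_m(\Gamma)$, the dual norm $\Vert\cdot\Vert_{m,*}$, and \eqref{saiiN}), is also the route taken in the cited source. Your reduction of $v\in\partial\mathcal{F}_\Gamma(u)$ to the two conditions $\langle v,u\rangle=\mathcal{F}_\Gamma(u)$ and $\langle v,w\rangle\le\mathcal{F}_\Gamma(w)$ for all $w$ is the standard consequence of convexity plus positive $1$-homogeneity; the identification of the polar set with $\mathcal{K}=\{-\z'\,:\,\z\in X_K(\Gamma),\ \Vert\z\Vert_{L^\infty(\Gamma)}\le1\}$ via closedness and Hahn--Banach is the genuinely substantive step, and your closedness proof is sound: on a compact graph, $\Vert\z_n\Vert_\infty\le1$ together with $\z_n'=-v_n$ bounded in $L^2(\Gamma)$ bounds $\z_n$ in $W^{1,2}(\Gamma)$, the edgewise compact embedding of $W^{1,2}(0,\ell_\e)$ into $C([0,\ell_\e])$ passes both the sup-norm bound and the Kirchhoff sums to the limit, and weak convergence of derivatives identifies $\z'=-v$. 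The equivalence (ii)$\Leftrightarrow$(iii) is indeed a direct rewriting via Green's formula \eqref{Kintbpart}, and the density claim correctly follows from Brezis' theorem that $\overline{D(\partial\mathcal{F}_\Gamma)}=\overline{D(\mathcal{F}_\Gamma)}$ together with the density of $W^{1,2}(\Gamma)\subset BV(\Gamma)$ in $L^2(\Gamma)$.

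One point should be made explicit rather than parenthetical: your Hahn--Banach step needs the support-function identity $\sup_{v'\in\mathcal{K}}\langle v',w\rangle=\mathcal{F}_\Gamma(w)$ for \emph{every} $w\in L^2(\Gamma)$, including $w\notin BV(\Gamma)$, where one must show the supremum is $+\infty$. Otherwise a separating element $w_0\notin BV(\Gamma)$ would produce no contradiction, since $\mathcal{F}_\Gamma(w_0)=+\infty$ makes the polar condition vacuous at $w_0$. The fact is true and easy: any $\z$ that is smooth, compactly supported in the interior of a single edge, and extended by zero to the other edges lies in $X_K(\Gamma)$ (all its vertex values vanish, so the Kirchhoff sums hold trivially), and on an edge where $[w]_\e\notin BV(0,\ell_\e)$ the classical one-dimensional total variation supremum over such test fields is already infinite. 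But as written this is the one load-bearing assertion in your proof that carries no justification, and it deserves the two lines it takes.
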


In \cite{Mazon} was introduced the space
 $$G_m(\Gamma):= \{ v \in L^2(\Gamma) \ : \ \exists \z \in X_K(\Gamma), \   v = -\z' \ \hbox{a.e. in} \Gamma \},$$
 and consider in $G_m(\Gamma)$ the norm
$$\Vert v \Vert_{m,*} := \inf\{\Vert \z \Vert_\infty \ : \z \in X_K(\Gamma), \ v = -\z' \ \hbox{a.e. in} \ \Gamma \}.$$

In the continuous setting  this space was introduce in \cite{Meyer}.

Note that, for $v \in G_m(\Gamma)$, we have that there exists $\z_v\in X_K(\Gamma)$, such that $v = -\z'_v$ and $\Vert v \Vert_{m,*} = \Vert \z_v \Vert_\infty$.

From the proof of Theorem \ref{chasubd}, for $f \in G_m(\Gamma)$, we have

$$
 \Vert f \Vert_{m,*}:= \sup \left\{ \left\vert \int_\Gamma f(x) u(x) dx \right\vert   : u \in BV(\Gamma), \ TV_{\Gamma}(u) \leq 1\right\},
$$
 and, moreover,
\begin{equation}\label{saiiN}\partial \mathcal{F}_\Gamma (u) = \left\{ v \in L^2(\Gamma)  \ : \ \Vert v \Vert_{m,*} \leq 1, \  \int_{\Gamma} u(x) v(x) dx = TV_\Gamma (u)\right\}.\end{equation}

 \begin{definition}{\rm We define the {\it $1$-Laplacian operator} in the metric graph $\Gamma$ as
 $$(u, v ) \in \Delta_1^{\Gamma} \iff -v \in \partial \mathcal{F}_\Gamma(u),$$
 that is, if $u \in  L^2(\Gamma)\cap BV(\Gamma)$,  $v \in L^2(\Gamma)$ and there   exists  $\z  \in X_K(\Gamma)$, $\Vert \z \Vert_{L^\infty(\Gamma)} \leq 1$ such that
\begin{equation}\label{Neqq2}
   v = \z^{\prime}, \quad  \hbox{that is,} \quad  [v]_\e = [\z]_\e^{\prime} \quad   \ \hbox{in} \ \mathcal{D}^{\prime}(0, \ell_\e) \  \forall \e \in E(\Gamma)
\end{equation}
and
\begin{equation}\label{1eqq3}
\mathcal{F}_\Gamma (u) = \int_{\Gamma}   \z Du  - \sum_{\v \in {\rm int}(V(\Gamma))}  \frac{1}{d_{\v}} \sum_{\e, \hat{\e} \in E_\v(\Gamma)} [\z]_{\e} (\v)([u]_{\e}(\v) - [u]_{\hat{\e}}(\v).
\end{equation}
 }
 \end{definition}

\section{The Cheeger Problem: $\Gamma$-Cheeger and $\Gamma$-Calibrable Sets}\label{Cheeger problem}

 Given a  set $\Omega \subset \Gamma$ with    $0 < \ell(\Omega) < \ell(X)$ and ${\rm Per}_\Gamma(\Omega) >0$, we define its {\it $\Gamma$-Cheeger constant} of $\Omega$  by
\begin{equation}\label{cheeg1}h_1^\Gamma(\Omega) := \inf \left\{ \frac{{\rm Per}_\Gamma(E)}{\ell(E)} \  : \  E \subset \Omega, \   \,  \ell( E)>0 \right\}.\end{equation}

A set  $E  \subset \Omega$ achieving the infimum in \eqref{cheeg1} is said to be an {\it $\Gamma$-Cheeger set} of $\Omega$. Furthermore, we say that $\Omega$ is {\it $\Gamma$-calibrable} if it is an $\Gamma$-Cheeger set of itself, that is, if

$$h_1^\Gamma(\Omega) = \frac{{\rm Per}_\Gamma(\Omega)}{\ell(\Omega)}.$$

For ease of notation, we will denote
$$\lambda^\Gamma_\Omega:= \frac{{\rm Per}_\Gamma(\Omega)}{\ell(\Omega )},$$
for any  set $\Omega \subset \Gamma$ with $0<\ell(\Omega)$.

Note that $\Omega$ is $\Gamma$-calibrable if and only if $\Omega$  minimizes of the functional
$${\rm Per}_\Gamma (E) - \lambda^\Gamma_\Omega \ell(E)$$
on the set $E \subset \Omega$, with $\ell(E) >0$.

It is well known (see for instance \cite{ACCh}) that in $\R^N$, any Euclidean ball is a calibrable set. Let us see in the next example that this is not true, in general, in metric graphs.

\begin{example}\label{goodexam}{\rm Consider the metric graph $\Gamma$ with fourth vertices and three edges, that is $V(\Gamma) = \{\v_1, \v_2, \v_3, \v_4 \}$ and $E(\Gamma) = \{ \e_1:=[\v_1, \v_2], \e_2:=[\v_2, \v_3], \e_3:=[\v_3, \v_4] \}$, with $\ell_{\e_1} =2$, $\ell_{\e_i} =1$, $i=2,3$.

\begin{center}
\begin{tikzpicture}
 \tikzstyle{gordito2} = [line width=3]

\node (v1) at (-5.1,-1.2) {};
\node (v5) at (2.5773,-0.7977) {};

\node[below] at (v1) {$\v_1$};
\node[above] at (5.3093,1.0278) {$\v_3$};
\node[below] at (v5) {$\v_2$};
\node[above] at (5.3843,-3.4747) {$\v_4$};

\draw[gordito2]  (-5.1,-1) node (v2) {} circle (0.05);
\draw[gordito2]  (5,1) node (v3) {} circle (0.05);
\draw[gordito2]  (2,-1) node (v4) {} circle (0.05);
\draw[gordito2]  (5,-3) node (v4) {} circle (0.05);

\draw[->,line width=1.2]  (-5.1,-1)--(-1.4,-1);
\draw[line width=1.2]  (-2,-1) --(2,-1);
\draw[line width=1.2] (2,-1)--(5,-3);
\draw[->,line width=1.2] (2,-1)--(4.253,-2.5103);

\draw[line width=1.2](2,-1)--(5,1);

\draw[->,line width=1.2](2,-1)--(4.1524,0.4299);

\node at (0.2,-1) {$|$};
\node at (0.2,-1.4) {$\v$};

\draw[<->] (0.2145,-0.4674) --  (1.9512,-0.4531);
\node at (1.09,-0.0867) {$\frac12$};

\draw[<->] (2.6886,0.0386) --  (1.9512,-0.4531);
\node at (2.2324,0.1081) {$\frac18$};

\draw[<->] (2.6677,-1.8501) --  (1.9538,-1.37);
\node at (2.141,-1.8852) {$\frac18$};

\draw[<->] (-2.2636,-0.4745) --  (0.2177,-0.4745);
\node at (-0.878,-0.1128) {$\frac58$};

\node at (-1.4855,-1.3794) {$\e_1$};
\node at (4.2,0) {$\e_2$};
\node at (3.9068,-1.8274) {$\e_3$};
\node at (1.2,-4.4) {};

\draw[line width=2.4,gray]   (-2.2636,-1) --(2,-1);
\draw[line width=2.4,gray]   (2.8558,-1.5693)  --(2,-1);
\draw[line width=2.4,gray]   (2.8758,-0.4187)   --(2,-1);
\draw[gordito2]  (-5.1,-1) node (v2) {} circle (0.05);
\draw[gordito2]  (5,1) node (v3) {} circle (0.05);
\draw[gordito2]  (2,-1) node (v4) {} circle (0.05);
\draw[gordito2]  (5,-3) node (v4) {} circle (0.05);
\end{tikzpicture}
\end{center}

Consider the ball $B\left(\v,\frac58 \right)$, being $\v = c^{-1}_{\e_1}(\frac32)$. Then,
$$\lambda^\Gamma_{B\left(\v,\frac58 \right)}:= \frac{{\rm Per}_\Gamma(B\left(\v,\frac58 \right))}{\ell\left(B\left(\v,\frac58 \right) \right)} = \frac{3}{\frac58+ \frac12 +\frac28} = \frac{24}{11}.$$

Now, by \eqref{Kintbpart}, we have
 $${\rm Per}_\Gamma \left(B\left(\v,\frac12 \right)\right) = TV_\Gamma\left(\1_{B\left(\v,\frac12 \right)} \right) = \sup \left\{ \left\vert \int_\Gamma u\z^{\prime} \right\vert \ : \ \z \in X_K(\Gamma), \ \Vert \z \Vert_\infty \leq  1  \right\}$$ $$=  \sup \left\{ \left\vert - \int_{\Gamma} \z D\1_{B\left(\v,\frac12 \right)} + \sum_{\e\in \E_{\v_2}(\Gamma)} [\z]_\e(\v_2) [u]_\e(\v_2) \right\vert  \ : \ \z \in X_K(\Gamma), \ \Vert \z \Vert_\infty \leq  1  \right\}   $$ $$=  \sup \left\{ \left\vert  [\z]_{\e_1}( c^{-1}_{\e_1}(1)) + [\z]_{\e_1}(\v_2) \right\vert  \ : \ \z \in X_K(\Gamma), \ \Vert \z \Vert_\infty \leq  1  \right\} =2  $$

$$\lambda^\Gamma_{B\left(\v,\frac12 \right)}:= \frac{{\rm Per}_\Gamma(B\left(\v,\frac12 \right))}{\ell \left(B\left(\v,\frac12 \right) \right)} = 2.$$
Therefore, the ball $B\left(\v,\frac58 \right)$ is not calibrable.

It is easy to see that if $E \subset \Omega:= B\left(\v,\frac58 \right)$, with $\ell(E) >0$, then ${\rm Per}_\Gamma(E) \geq 2$, being ${\rm Per}_\Gamma(E) = 2$ if $E \subset \e_i$. Now, the subset $E \subset \Omega$ with greater volume is $E:= [c^{-1}_{\e_1}(\frac78), \v_2]$. Therefore,
$$h_1^\Gamma(\Omega) =\frac{{\rm Per}_\Gamma(E)}{\ell(E)} = \frac{2}{\frac98} = \frac{16}{9} <2.$$
Then, we have that $E$ is the $\Gamma$-Cheeger set of $\Omega$.
}$\blacksquare$
\end{example}

\begin{theorem}\label{existCS} Let $\Omega \subset \Gamma$ with ${\rm Per}_\Gamma(\Omega) >0$ and   $\ell(\Omega)>0$. There exists a Cheeger set of $\Omega$.
\end{theorem}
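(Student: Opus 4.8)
The plan is to establish existence of a minimizer for the $\Gamma$-Cheeger functional \eqref{cheeg1} via the direct method of the calculus of variations, using the compactness of the embedding $BV(\Gamma) \hookrightarrow L^1(\Gamma)$ from Theorem \ref{embedding} together with the lower semicontinuity of $\mathrm{Per}_\Gamma$ implicit in Proposition \ref{lsc1}. First I would observe that the infimum $h_1^\Gamma(\Omega)$ is finite and nonnegative, since for any admissible $E \subset \Omega$ with $\ell(E) > 0$ we have $\mathrm{Per}_\Gamma(E) \geq 0$, and the upper bound follows by exhibiting a single competitor (for instance a small subinterval of an edge contained in $\Omega$, which has finite perimeter and positive length). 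Then I would take a minimizing sequence $\{E_n\}$ of subsets of $\Omega$ with $\ell(E_n) > 0$ and $\frac{\mathrm{Per}_\Gamma(E_n)}{\ell(E_n)} \to h_1^\Gamma(\Omega)$.

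Next I would extract a convergent subsequence. Writing $u_n := \1_{E_n}$, I note that these are uniformly bounded in $L^1(\Gamma)$ (indeed $\|u_n\|_{L^1} = \ell(E_n) \leq \ell(\Gamma)$) and that $TV_\Gamma(u_n) = \mathrm{Per}_\Gamma(E_n)$ is bounded along the minimizing sequence. By \eqref{Form1}, $\vert Du_n \vert(\Gamma) \leq TV_\Gamma(u_n)$ is also bounded, so $\{u_n\}$ is bounded in $BV(\Gamma)$ in the norm $\|\cdot\|_{BV(\Gamma)}$. The compact embedding $BV(\Gamma) \hookrightarrow L^1(\Gamma)$ then yields a subsequence (not relabeled) converging in $L^1(\Gamma)$ to some $u \in L^1(\Gamma)$; since the $u_n$ are characteristic functions, a further subsequence converges $\mathcal{L}^1$-a.e., so the limit $u = \1_E$ for a measurable set $E \subset \Omega$. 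Convergence in $L^1$ gives $\ell(E_n) \to \ell(E)$, and lower semicontinuity of $TV_\Gamma$ with respect to $L^1$ (equivalently weak $L^1$) convergence, from Proposition \ref{lsc1}, gives $\mathrm{Per}_\Gamma(E) \leq \liminf_n \mathrm{Per}_\Gamma(E_n)$.

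The main obstacle, and the step requiring genuine care, is ruling out the degenerate limit $\ell(E) = 0$: if the minimizing sets shrink to a null set, then $E = \emptyset$ is not admissible and the argument collapses. To handle this I would show that $\ell(E_n)$ stays bounded away from zero along the minimizing sequence, which follows from an isoperimetric-type lower bound guaranteeing that sets of small length have large Cheeger ratio. Concretely, any $E \subset \Gamma$ of finite perimeter with $0 < \ell(E) < \ell(\Gamma)$ satisfies $\mathrm{Per}_\Gamma(E) \geq c > 0$ for a constant $c$ depending only on $\Gamma$: indeed a set whose boundary is ``cheap'' must, by the structure of the graph and \eqref{conssttLinear1}, either be all of $\Gamma$ or contain at least one genuine jump or interior boundary point contributing a fixed positive amount to the perimeter. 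Thus $\frac{\mathrm{Per}_\Gamma(E_n)}{\ell(E_n)} \geq \frac{c}{\ell(E_n)}$, and since the left side converges to the finite value $h_1^\Gamma(\Omega)$, we obtain $\liminf_n \ell(E_n) \geq c / h_1^\Gamma(\Omega) > 0$, whence $\ell(E) > 0$ and $E$ is admissible. Combining the two semicontinuity facts, $\frac{\mathrm{Per}_\Gamma(E)}{\ell(E)} \leq \liminf_n \frac{\mathrm{Per}_\Gamma(E_n)}{\ell(E_n)} = h_1^\Gamma(\Omega)$, so $E$ attains the infimum and is the desired Cheeger set.
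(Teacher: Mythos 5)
Your proposal is correct and follows the same overall route as the paper: a minimizing sequence, compactness from the embedding Theorem \ref{embedding}, and lower semicontinuity of $TV_\Gamma$ (Proposition \ref{lsc1}) to pass to the limit. The difference is that the paper's own proof is much terser and silently skips two points that you treat explicitly. First, you justify that the minimizing sequence is bounded in $BV(\Gamma)$ (via $\mathrm{Per}_\Gamma(E_n)=\frac{\mathrm{Per}_\Gamma(E_n)}{\ell(E_n)}\,\ell(E_n)\le C\,\ell(\Gamma)$ and \eqref{Form1}), which is what actually licenses the appeal to the compact embedding; the paper invokes the embedding without comment. Second, and more substantively, you identify and close the degeneracy gap: the paper concludes $h_1^\Gamma(\Omega)=\mathrm{Per}_\Gamma(E)/\ell(E)$ without ever checking $\ell(E)>0$, whereas you rule out $\ell(E)=0$ via a uniform isoperimetric-type bound $\mathrm{Per}_\Gamma(E)\ge c>0$ for all finite-perimeter sets with $0<\ell(E)<\ell(\Gamma)$. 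That bound is true and provable with the paper's tools, though your justification of it is loose; to make it airtight, split into two cases: if $[\1_E]_\e$ is non-constant on some edge then $\mathrm{Per}_\Gamma(E)\ge|D\1_E|(\Gamma)\ge 1$ by \eqref{Form1}, while if $\1_E$ is a.e.\ constant on every edge then $E$ agrees a.e.\ with one of the finitely many proper nonempty unions of edges, each of which has strictly positive perimeter by \eqref{conssttLinear}, so the minimum over this finite family gives the constant $c$. A final minor caveat: your claim that $h_1^\Gamma(\Omega)<\infty$ because $\Omega$ contains a small subinterval of an edge need not hold for an arbitrary measurable $\Omega$; the cleaner competitor is $E=\Omega$ itself whenever $\Omega$ has finite perimeter, which is the natural reading of the hypothesis $\mathrm{Per}_\Gamma(\Omega)>0$.
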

\begin{proof} Let $E_n \subset \Omega$ with $\ell( E_n)>0$, such that

$$h_1^\Gamma(\Omega) = \lim_{n \to \infty} \frac{{\rm Per}_\Gamma(E_n)}{\ell(E_n)}.$$

By the Embedding Theorem (Theorem \ref{embedding}), taking a subsequence if necessary,  we have that there exists $E \subset \Gamma)$, such that
 $$\1_E = \lim_{n \to \infty} \1_{E_n} \quad \hbox{in } \ L^1(\Gamma) \ \hbox{and a.e}.$$
 Then, by the lower semi-continuity of the total variation (Corollary \ref{lsc1}), we have
 $${\rm Per}_\Gamma(E) \leq \liminf_{n \to \infty} {\rm Per}_\Gamma (E_n).$$
 Therefore
 $$h_1^\Gamma(\Omega) = \frac{{\rm Per}_\Gamma(E)}{\ell(E)}.$$
 \end{proof}

\begin{remark} {\rm Let $\Omega \subset \Gamma$ with ${\rm Per}_\Gamma(\Omega) >0$ and   $\ell(\Omega)>0$. Then, if there exist $\lambda >0$ and a function $\xi : \Gamma \rightarrow \R$ such that $\xi(x) = 1$ for all $x \in \Omega$, satisfying
$$- \lambda \xi \in \Delta^\Gamma_1 \1_\Omega, \quad \hbox{in} \ \Gamma,$$
then
$$\lambda = \lambda^\Gamma_\Omega.$$
In fact, we have that  there   exists  $\z  \in X(\Gamma)$, $\Vert \z \Vert_{L^\infty(\Gamma)} \leq 1$ such that
$$
   -\lambda \xi  = \z^{\prime}, \quad \mathcal{F}_\Gamma (\1_\Omega) = \int_{\Gamma}   \z D\1_\Omega -  \sum_{\v \in {\rm int}(V(\Gamma))}  \frac{1}{d_{\v}} \sum_{\e, \hat{\e} \in E_\v(\Gamma)} [\z]_{\e} (\v)([\1_\Omega]_{\e}(\v) - [\1_\Omega]_{\hat{\e}}(\v).$$
 Then, applying  Green's formula \eqref{0intbpart}, we have
 $$\lambda \ell(\Omega) = \int_\Gamma \1_\Omega \lambda \xi dx = -\int_\Gamma \1_\Omega  \z' dx = \int_\Gamma \z D\1_\Omega - \sum_{\v \in {\rm int}(V(\Gamma))}   \sum_{\e\in \E_\v(\Gamma)} [\z]_\e(\v) [\1_\Omega]_\e(\v)$$ $$= \mathcal{F}_\Gamma (\1_\Omega) = {\rm Per}_\Gamma(\Omega).$$
}
\end{remark}

It is well known (see \cite{FK}) that the classical Cheeger constant
$$
h_1(\Omega):= \inf \left\{ \frac{Per(E)}{\vert E \vert} \, : \, E\subset \Omega, \  \vert E \vert >0 \right\},
$$
for a bounded smooth domain $\Omega \subset \R^N$, is an optimal Poincar\'{e} constant, namely, it coincides with the first eigenvalue of the $1$-Laplacian:
$$h_1(\Omega)=\Lambda_1(\Omega):= \inf \left\{ \frac{\displaystyle\int_\Omega \vert Du \vert +\displaystyle\int_{\partial \Omega} \vert u \vert d \mathcal{H}^{N-1}}{ \displaystyle\Vert u \Vert_{L^1(\Omega)}} \, : \, u \in BV(\Omega), \ \Vert u \Vert_\infty = 1 \right\}.$$

In order to get, in our context,  a  version of this result, we introduce the following constant.
For $\Omega \subset \Gamma$ with $0<\ell(\Omega)< \ell(\Gamma)$, we define
\begin{equation}\label{vvarational}\begin{array}{l}\displaystyle
\Lambda_1^\Gamma(\Omega)= \inf \left\{ TV_\Gamma(u) \ : \ u \in  BV(\Gamma), \ u= 0 \ \hbox{in} \ \Gamma \setminus \Omega, \ u \geq 0, \  \int_\Gamma u(x)  d(x) = 1, TV_\Gamma(u) >0 \right\}
\\ \\
\displaystyle \phantom{\Lambda_1^\Gamma(\Omega)}
= \inf \left\{\frac{ TV_\Gamma (u)}{\displaystyle \int_\Gamma  u(x)  dx} \ : \ u \in BV(\Gamma), \ u= 0 \ \hbox{in} \ \Gamma \setminus \Omega,\ u \geq 0, \ u\not\equiv 0, TV_\Gamma(u) >0  \right\}.
\end{array}
\end{equation}

\begin{theorem}\label{igualconsts}  Let $\Omega \subset X$ with $0 < \ell(\Omega) < \ell(X)$. Then,
\begin{equation}\label{first} h_1^\Gamma(\Omega) = \Lambda_1^\Gamma(\Omega).\end{equation}
\end{theorem}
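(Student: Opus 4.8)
**The plan is to prove the two inequalities $h_1^\Gamma(\Omega) \geq \Lambda_1^\Gamma(\Omega)$ and $h_1^\Gamma(\Omega) \leq \Lambda_1^\Gamma(\Omega)$ separately.** The strategy mirrors the classical argument relating the Cheeger constant to the first eigenvalue of the $1$-Laplacian, but adapted to the metric-graph total variation $TV_\Gamma$, which incorporates the vertex jumps. The key technical tool is the coarea formula (Theorem \ref{coarea1}), which expresses $TV_\Gamma(u)$ as an integral of the $\Gamma$-perimeters of the superlevel sets of $u$.

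First I would establish $h_1^\Gamma(\Omega) \geq \Lambda_1^\Gamma(\Omega)$. Given any competitor $E \subset \Omega$ with $\ell(E) > 0$ in the definition \eqref{cheeg1}, the indicator $u = \1_E$ is admissible in the variational problem \eqref{vvarational}: it lies in $BV(\Gamma)$, vanishes on $\Gamma \setminus \Omega$, is nonnegative, and satisfies $\int_\Gamma \1_E\, dx = \ell(E) > 0$. Since $TV_\Gamma(\1_E) = {\rm Per}_\Gamma(E)$ by definition, we get
$$\Lambda_1^\Gamma(\Omega) \leq \frac{TV_\Gamma(\1_E)}{\int_\Gamma \1_E\, dx} = \frac{{\rm Per}_\Gamma(E)}{\ell(E)}.$$
Taking the infimum over all such $E$ yields $\Lambda_1^\Gamma(\Omega) \leq h_1^\Gamma(\Omega)$.

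For the reverse inequality $h_1^\Gamma(\Omega) \leq \Lambda_1^\Gamma(\Omega)$, I would take an arbitrary admissible $u$ for \eqref{vvarational}, so that $u \geq 0$, $u = 0$ on $\Gamma \setminus \Omega$, and $\int_\Gamma u\, dx > 0$. Applying the coarea formula gives $TV_\Gamma(u) = \int_0^\infty {\rm Per}_\Gamma(E_t(u))\, dt$, where the integration range is $[0,\infty)$ because $u \geq 0$. Since each superlevel set $E_t(u) = \{u > t\}$ is contained in $\Omega$ (as $u$ vanishes off $\Omega$), we have ${\rm Per}_\Gamma(E_t(u)) \geq h_1^\Gamma(\Omega)\, \ell(E_t(u))$ for almost every $t$ with $\ell(E_t(u)) > 0$. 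Combining this with the layer-cake identity $\int_\Gamma u\, dx = \int_0^\infty \ell(E_t(u))\, dt$ gives
$$TV_\Gamma(u) = \int_0^\infty {\rm Per}_\Gamma(E_t(u))\, dt \geq h_1^\Gamma(\Omega) \int_0^\infty \ell(E_t(u))\, dt = h_1^\Gamma(\Omega) \int_\Gamma u\, dx.$$
Dividing and taking the infimum over admissible $u$ yields $\Lambda_1^\Gamma(\Omega) \geq h_1^\Gamma(\Omega)$, completing the proof.

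**The main obstacle I anticipate is the careful handling of the coarea step**, specifically ensuring that the almost-everywhere inequality ${\rm Per}_\Gamma(E_t(u)) \geq h_1^\Gamma(\Omega)\,\ell(E_t(u))$ is applied legitimately. One must check that the set of $t$ where $\ell(E_t(u)) = 0$ (but ${\rm Per}_\Gamma(E_t(u))$ might be positive or the ratio undefined) contributes negligibly, and that $E_t(u)$ genuinely qualifies as a competitor in \eqref{cheeg1} for a.e. $t$, i.e.\ that ${\rm Per}_\Gamma(E_t(u)) < \infty$ for a.e.\ $t$ — which follows from the finiteness of $TV_\Gamma(u)$ via the coarea formula. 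The layer-cake identity $\int_\Gamma u = \int_0^\infty \ell(E_t(u))\,dt$ is the metric-graph analogue of Fubini applied to $\{(x,t) : u(x) > t\}$, using $\ell(A) = \int_\Gamma \1_A\, dx$; this is routine but should be invoked explicitly.
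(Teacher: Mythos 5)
Your proposal is correct and follows essentially the same route as the paper: indicator functions of competitor sets give $\Lambda_1^\Gamma(\Omega) \leq h_1^\Gamma(\Omega)$, and the coarea formula (Theorem \ref{coarea1}) combined with Cavalieri's principle gives the reverse inequality. The only difference is cosmetic — the paper phrases the coarea step as a proof by contradiction, whereas you apply the pointwise inequality ${\rm Per}_\Gamma(E_t(u)) \geq h_1^\Gamma(\Omega)\,\ell(E_t(u))$ directly and integrate, which is the same underlying estimate.
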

\begin{proof}
Given a  subset $E \subset \Omega$  with $\ell(E )> 0$, we have
$$\frac{ TV_\Gamma(\1_E)}{\Vert \1_E \Vert_{L^1(X, \nu)}} = \frac{{\rm Per}_\Gamma(E)}{\ell(E)}.$$
Therefore,
\begin{equation}\label{ineqqul1}
\Lambda_1^\Gamma(\Omega) \leq h_1^\Gamma(\Omega).
\end{equation}

Suppose the another inequality does not holds. Then, there exists $u \in BV(\Gamma)$, $u= 0 \ \hbox{in} \ \Gamma \setminus \Omega$, $u \geq 0$, $u\not\equiv 0$, $TV_\Gamma(u) >0$, such that
$$\frac{ TV_\Gamma (u)}{\displaystyle \int_\Gamma  u(x)  dx} < h_1^\Gamma(\Omega).$$
Then, by the coarea formula \eqref{coaerea} and the Cavalieri's  Principle, we obtain
$$0 > TV_\Gamma (u) - h_1^\Gamma(\Omega) \int_\Gamma  u(x)  dx  =\int_0^\infty \left({\rm Per}_\Gamma(E_t(u)) - h_1^\Gamma(\Omega) \ell(E_t(u)) \right) dt \geq 0,$$
which is a contradiction, and consequently $\Lambda_1^\Gamma(\Omega) = h_1^\Gamma(\Omega)$.
\end{proof}

Let us point out that a the equality \eqref{first} was obtained in \cite[Theorem 6.2]{DelPR2}, but using a different concept of total variation and therefore of perimeter (see Remark \ref{DPRossi}).

\begin{remark}{\rm we are going to give a characterization of the solutions of the Euler-Lagrange equation of the variational problem \eqref{vvarational}. We denote by $$K_\Omega:= \left\{ u \in  BV(\Gamma), \ u= 0 \ \hbox{in} \ \Gamma \setminus \Omega, \ u \geq 0, \  \int_\Gamma u(x)  d(x) = 1, \ TV_\Gamma(u) >0 \right\},$$
and $I_{K_\Omega}$ is the indicator function of $K_\Omega$, defined by
$$I_{K_\Omega}(u) := \left\{ \begin{array}{ll} 0, \quad &\hbox{if} \ \ u \in K_\Omega, \\[10pt]   \infty, \quad &\hbox{if } \ \ u \not\in K_\Omega.\end{array} \right. $$ Then,
$$\inf \left\{ TV_\Gamma(u) \ : \ u \in  BV(\Gamma), \ u= 0 \ \hbox{in} \ \Gamma \setminus \Omega, \ u \geq 0, \  \int_\Gamma u(x)  d(x) = 1, \ TV_\Gamma(u) >0 \right\} $$ $$=  \inf \left\{ \mathcal{F}_\Gamma(u)+ I_{K_\Omega}(u) \ : \ u \in  L^2(\Gamma)\right\}.$$
Therefore,  $u$ is a minimizer of \eqref{vvarational} if and only if $0 \in \partial (\mathcal{F}_\Gamma + I_{K_\Omega})(u) = \partial \mathcal{F}_\Gamma(u) + \partial I_{K_\Omega}(u),$ where the last equality is consequence of \cite[Corollary 2.11]{Brezis}. Then,  $u$ is a minimizer of \eqref{vvarational} if and only if, $u \in K_\Omega$ and there exists $v \in \partial \mathcal{F}_\Gamma(u)$ such that $- v \in \partial I_{K_\Omega}(u)$, that is,  $\int_\Gamma uv dx \leq \int_\Gamma wv dx$ for all $w \in K_\Omega$. Now by Theorem \ref{chasubd}, we have $v \in \partial \mathcal{F}_\Gamma (u)$ if and only if there   exists  $\z  \in X_K(\Gamma)$, $\Vert \z \Vert_{L^\infty(\Gamma)} \leq 1$ such that
$$
   v = -\z^{\prime} \quad and \int_{\Gamma} u(x) v(x) dx = \mathcal{F}_\Gamma (u) $$ $$= \int_{\Gamma}   \z Du -  \sum_{\v \in {\rm int}(V(\Gamma))} \frac{1}{d_\v} \sum_{\hat{\e}\in \E_\v(\Gamma)} \sum_{\e \in  E_\v(\Gamma)}[\z]_\e(\v)  \left( [u]_\e(\v) - [u]_{\hat{\e}}(\v) \right).
$$
Consequently, we have that $u$ is a minimizer of \eqref{vvarational} if and only if $u \in K_\Omega$ and there   exists  $\z  \in X_K(\Gamma)$, $\Vert \z \Vert_{L^\infty(\Gamma)} \leq 1$ such that $$TV_\Gamma (u)   \leq \int_\Gamma \z Dw -  \sum_{\v \in {\rm int}(V(\Gamma))} \frac{1}{d_\v} \sum_{\hat{\e}\in \E_\v(\Gamma)} \sum_{\e \in  E_\v(\Gamma)}[\z]_\e(\v)  \left( [w]_\e(\v) - [w]_{\hat{\e}}(\v) \right),\quad \forall w \in K_\Omega.$$
}$\blacksquare$
\end{remark}

 The Max-Flow Min-Cut Theorem on networks due to Ford and Fulkenerson \cite{FF1}, in the continuous case was first  studied by Strang \cite{Strang1} in the particular case of the plane. Given a bounded, planar domain $\Omega$, and
given two functions $F,c : \Omega \rightarrow  \R$, we want to find the maximal value of $\lambda \in \R$ such
that there exists a vector field $V :\Omega \rightarrow  \R^2$ satisfying
$$\left\{\begin{array}{ll} {\rm div} \,  V = \lambda F \\[10pt] \Vert V \Vert_\infty \leq c. \end{array} \right.$$
The problem can be interpreted as follows: given a source or sink term $F$, we want
to find the maximal flow in $\Omega$ under the capacity constraint given by $c$. It turns out
that if $F = 1$ and $c = 1$, then the maximal value of $\lambda$ is equal to the Cheeger constant
of $\Omega$, while the boundary of a Cheeger set is the associated minimal cut (see \cite{Grieser} or \cite{Strang2}). Let us see  now that a similar result also holds in metric graphs.

\begin{theorem}\label{igualconsts2}  Let $\Omega \subset X$ with $0 < \ell(\Omega) < \ell(X)$. Then,
\begin{equation}\label{MinMax1} \begin{array}{lll} h_1^\Gamma(\Omega) &=  \sup \{ h \in \R^+ \ : \ \exists \z \in X_K(\Gamma), \ \Vert \z \Vert_\infty \leq 1, \ \z' \geq h \ \hbox{in} \ \Omega \} \\[10pt] &= \sup \left\{\frac{1}{\Vert \z \Vert_\infty }  \ : \ \z \in X_K(\Gamma), \ \z' = \1_\Omega  \right\} \\[10pt] &= \sup \left\{\frac{1}{\Vert \z \Vert_\infty }  \ : \ \z \in X_K(\Gamma), \ \z' = 1  \ \hbox{in} \ \Omega \right\}. \end{array}
\end{equation}
\end{theorem}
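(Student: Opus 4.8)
The plan is to reduce the whole statement to the variational constant $\Lambda_1^\Gamma(\Omega)$ and to read \eqref{MinMax1} as a max-flow/min-cut duality. Write $A$, $B$, $C$ for the three suprema on the right-hand side of \eqref{MinMax1}, in the order in which they appear. By Theorem \ref{igualconsts} it suffices to show $A=C=h_1^\Gamma(\Omega)$ together with the two inequalities linking $B$. The elementary comparisons $B\le C$ come for free, since the constraint $\z'=\1_\Omega$ forces $\z'=1$ on $\Omega$, so the feasible set for $B$ is contained in that for $C$; and $C\le A$ follows by rescaling, because if $\z$ is feasible for $C$ then $\tilde\z:=\z/\Vert\z\Vert_\infty\in X_K(\Gamma)$ satisfies $\Vert\tilde\z\Vert_\infty\le1$ and $\tilde\z'=1/\Vert\z\Vert_\infty$ on $\Omega$, so the value $1/\Vert\z\Vert_\infty$ is admissible for $A$.

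Next I would prove the bound $A\le\Lambda_1^\Gamma(\Omega)$ directly from the definition of the total variation. Fix $\z\in X_K(\Gamma)$ with $\Vert\z\Vert_\infty\le1$ and $\z'\ge h$ on $\Omega$, and let $u$ be any competitor in \eqref{vvarational}, that is $u\ge0$, $u=0$ on $\Gamma\setminus\Omega$ and $\int_\Gamma u\,dx=1$. Since $u$ is nonnegative and supported in $\Omega$,
$$h=h\int_\Gamma u\,dx\le\int_\Gamma u\,\z'\,dx\le\left\vert\int_\Gamma u\,\z'\,dx\right\vert\le TV_\Gamma(u),$$
the last inequality being exactly the definition of $TV_\Gamma(u)$ as a supremum over $X_K(\Gamma)$. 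Taking the infimum over $u$ and then the supremum over $\z$ and $h$ yields $A\le\Lambda_1^\Gamma(\Omega)$, hence $B\le C\le A\le\Lambda_1^\Gamma(\Omega)=h_1^\Gamma(\Omega)$.

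The substance is the reverse inequality $h_1^\Gamma(\Omega)\le A$, and here I would run a minimax argument. Using $TV_\Gamma(u)=\sup\{\int_\Gamma u\,\z':\z\in X_K(\Gamma),\ \Vert\z\Vert_\infty\le1\}$ (the absolute value may be dropped on $K_\Omega$ by replacing $\z$ with $-\z$), one has $\Lambda_1^\Gamma(\Omega)=\inf_{u\in K_\Omega}\sup_{\z}\int_\Gamma u\,\z'$. The feasible set $K_\Omega$ is convex, the set $\{\z\in X_K(\Gamma):\Vert\z\Vert_\infty\le1\}$ is convex and weak$^\ast$ compact in $L^\infty(\Gamma)$, and the pairing is bilinear, so Sion's minimax theorem lets me exchange the two operations:
$$\Lambda_1^\Gamma(\Omega)=\sup_{\z}\ \inf_{u\in K_\Omega}\int_\Gamma u\,\z'=\sup_{\z}\ \mathop{\mathrm{ess\,inf}}_{\Omega}\z'=A,$$
where the inner infimum equals $\mathop{\mathrm{ess\,inf}}_\Omega\z'$ because mass can be concentrated where $\z'$ is smallest on $\Omega$. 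This is the step I expect to be the main obstacle: verifying the hypotheses of the minimax theorem in the correct topology (weak$^\ast$ compactness of the $\z$-ball and the appropriate semicontinuity of the bilinear pairing), and handling the constraint $TV_\Gamma(u)>0$ in $K_\Omega$ by a density argument. An alternative producing the same flow is to note, via Theorem \ref{embedding}, Proposition \ref{lsc1} and Theorem \ref{existCS}, that $\Lambda_1^\Gamma(\Omega)$ is attained at some $u^\ast\in K_\Omega$, and then to invoke the Euler–Lagrange characterization recorded before the statement: Theorem \ref{chasubd} furnishes $\z\in X_K(\Gamma)$ with $\Vert\z\Vert_\infty\le1$ and $\int_\Gamma u^\ast(-\z')\,dx=TV_\Gamma(u^\ast)$, while the variational inequality $\int_\Gamma(-\z')w\,dx\ge\Lambda_1^\Gamma(\Omega)$ for all $w\in K_\Omega$ forces $-\z'\ge\Lambda_1^\Gamma(\Omega)$ a.e. on $\Omega$; replacing $\z$ by $-\z$ (still in $X_K(\Gamma)$, same sup-norm) gives a flow with $(-\z)'\ge\Lambda_1^\Gamma(\Omega)$ on $\Omega$, whence $A\ge\Lambda_1^\Gamma(\Omega)$.

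Finally I would upgrade this optimal flow to the equality forms in $C$ and $B$. Complementary slackness $\int_\Gamma u^\ast((-\z')-\Lambda_1^\Gamma(\Omega))\,dx=0$ with $(-\z')\ge\Lambda_1^\Gamma(\Omega)$ and $u^\ast\ge0$ gives $(-\z')=\Lambda_1^\Gamma(\Omega)$ a.e. on $\{u^\ast>0\}$, so the rescaling $\eta:=-\z/\Lambda_1^\Gamma(\Omega)\in X_K(\Gamma)$ satisfies $\eta'=1$ on $\{u^\ast>0\}$, $\eta'\ge1$ on $\Omega$ and $\Vert\eta\Vert_\infty\le1/\Lambda_1^\Gamma(\Omega)$. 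To reach $C$ I must flatten $\eta'$ to the exact value $1$ on all of $\Omega$ by modifying $\eta$ on $\Omega\setminus\{u^\ast>0\}$ and on $\Gamma\setminus\Omega$ edge by edge, solving the one–dimensional problem of prescribing $\eta'$ while respecting the Kirchhoff conditions and the sup-norm bound; once this is done, $1/\Vert\eta\Vert_\infty\ge\Lambda_1^\Gamma(\Omega)$ gives $C\ge\Lambda_1^\Gamma(\Omega)$, so $C=A=h_1^\Gamma(\Omega)$. The delicate point for $B$ is that any $\z\in X_K(\Gamma)$ obeys the conservation identity $\int_\Gamma\z'\,dx=\sum_{\v}\sum_{\e\in E_\v(\Gamma)}[\z]_\e(\v)=0$, so prescribing $\z'=\1_\Omega$ globally requires routing the return flux through $\Gamma\setminus\Omega$; constructing such a divergence-prescribed flow without exceeding $\Vert\z\Vert_\infty\le1/\Lambda_1^\Gamma(\Omega)$, using the connectedness and compactness of $\Gamma$ and Corollary \ref{cojonudo}, is where the remaining work lies and is the most fragile part of the chain.
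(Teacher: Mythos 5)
Your elementary inequalities and the bound $A\le \Lambda_1^\Gamma(\Omega)=h_1^\Gamma(\Omega)$ are correct, and your \emph{second} route to the reverse inequality (attainment of $\Lambda_1^\Gamma(\Omega)$, the Euler--Lagrange characterization from the remark preceding the theorem, and testing the variational inequality on normalized indicators of small intervals) does produce $\z\in X_K(\Gamma)$ with $\Vert\z\Vert_\infty\le1$ and $-\z'\ge h_1^\Gamma(\Omega)$ a.e.\ on $\Omega$, hence $A=h_1^\Gamma(\Omega)$ (modulo the implicit assumption that $\Omega$ has finite perimeter, so that a.e.\ point of $\Omega$ is interior to an interval contained in $\Omega$). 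This is a genuinely different argument from the paper's: the paper relaxes the characterization of Theorem \ref{igualconsts} to $W^{1,1}(\Gamma)$ and applies Fenchel--Rockafellar duality \cite{EkelandTemam} to produce the second and third suprema directly, then closes the circle with the elementary bound $\alpha\le h_1^\Gamma(\Omega)$. Your \emph{first} route, however, is not repairable as stated: $\{\z\in X_K(\Gamma):\Vert\z\Vert_\infty\le1\}$ is not weak$^*$ compact in $L^\infty(\Gamma)$ (a weak$^*$ limit of such $\z$ need not belong to $W^{1,2}(\Gamma)$ at all, so the Kirchhoff constraint is not weak$^*$ closed), and $K_\Omega$ is not compact either, so Sion's theorem does not apply; only the Euler--Lagrange route survives.

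The genuine gap is in the second and third lines of \eqref{MinMax1}, which you explicitly leave as ``remaining work''. For the third supremum $C$ you only obtain $C\le A=h_1^\Gamma(\Omega)$; the missing construction of $\z\in X_K(\Gamma)$ with $\z'=1$ exactly on $\Omega$ and $\Vert\z\Vert_\infty$ arbitrarily close to $1/h_1^\Gamma(\Omega)$ is precisely the content of the duality argument, not a routine patch: modifying $\eta'$ on $\Omega\setminus\{u^*>0\}$ changes the endpoint values of $\eta$ on every affected edge and so destroys the Kirchhoff conditions, and nothing in your outline controls this. Worse, your plan for the second supremum $B$ is impossible: the conservation identity you yourself state, $\int_\Gamma\z'\,dx=0$ for every $\z\in X_K(\Gamma)$, shows that $\{\z\in X_K(\Gamma):\z'=\1_\Omega\}$ is \emph{empty} whenever $\ell(\Omega)>0$, since $\z'=\1_\Omega$ forces $\int_\Gamma\z'\,dx=\ell(\Omega)>0$; ``routing the return flux through $\Gamma\setminus\Omega$'' is exactly what the constraint $\z'=\1_\Omega$ forbids, because it prescribes $\z'=0$ there. (Concretely: on a single edge of length $1$ with $\Omega=[0,\tfrac12]$, Kirchhoff forces $\z(0)=\z(1)=0$, while $\z'=\1_\Omega$ gives $\z(1)=\tfrac12$.) This tension is inherited from the paper itself, whose dual problem is posed over $\z\in L^\infty(\Gamma)$ rather than $X_K(\Gamma)$; but in your write-up it means that the chain $B\le C\le A$ together with $A=h_1^\Gamma(\Omega)$ establishes only the first equality of the theorem, and the remaining two are not proved.
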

\begin{proof} Let
$$B:= \{ h \in \R^+ \ : \ \exists \z \in X_K(\Gamma), \ \Vert \z \Vert_\infty \leq 1, \ \z' \geq h \ \hbox{in} \ \Omega \},$$
and
$$\alpha:= \sup B.$$
Given $h \in B$ and $E \subset \Omega$ with $\ell(E) >0$, applying \eqref{forper}, we have
$$h \ell(E) = \int_E h dx \leq \int_E \z' dx \leq {\rm Per}_\Gamma (E).$$
Hence,
$$h \leq \frac{ {\rm Per}_\Gamma (E)}{\ell(E)}.$$
Then, taking  the supremum in $h$ and  the infimum in $E$, we obtain that $\alpha \leq  h_1^\Gamma(\Omega)$.

On the other hand, by Theorem \ref{igualconsts}, it is easy to see that
$$\frac{1}{ h_1^\Gamma(\Omega)} = \sup \left\{ \frac{\int_\Gamma u(x) dx}{\Vert u' \Vert_{L^1(\Gamma)} } \ : \ u \in W^{1,1}(\Gamma), \ u= 0 \ \hbox{in} \ \Gamma \setminus \Omega,\ u \geq 0, \ u\not\equiv 0, \Vert u' \Vert_{L^1(\Gamma)}  >0  \right\}$$ $$=  \sup \left\{ \int_\Omega u(x) dx \ : \ u \in W^{1,1}(\Gamma), \  \Vert u' \Vert_{L^1(\Gamma)} \leq 1,  \ u= 0 \ \hbox{in} \ \Gamma \setminus \Omega,\ u \geq 0, \ u\not\equiv 0 \right\}$$ $$= - \inf \left\{ -\int_\Omega u(x) dx \ : \ u \in W^{1,1}(\Gamma), \  \Vert u' \Vert_{L^1(\Gamma)} \leq 1,  \ u= 0 \ \hbox{in} \ \Gamma \setminus \Omega,\ u \geq 0, \ u\not\equiv 0  \right\}.$$
Then,
$$- \frac{1}{ h_1^\Gamma(\Omega)} = \inf \left\{F(u)+ G(L(u)) \ : \ u \in L^1(\Gamma) \right\}, $$
being $L : W^{1,1}(\Gamma) \rightarrow L^1(\Gamma)$ the linear map $L(u) :=u'$, $F(u):= -\int_\Gamma u \1_\Omega dx$ and $G: L^1(\Gamma) \rightarrow [0, +\infty]$ the convex function
$$G(v):= \left\{ \begin{array}{ll} 0 \quad &\hbox{if} \ \Vert v \Vert_{L^1(\Gamma)} \leq 1, \\[10pt] + \infty &\hbox{otherwise}. \end{array} \right.$$
By the Fenchel-Rockafellar duality Theorem given in \cite[Remark 4.2]{EkelandTemam}, we have
$$\inf \left\{F(u)+ G(L(u)) \ : \ u \in L^1(\Gamma) \right\} = \sup \left\{-G^*(-\z) - F^*(L^*(\z)) \ : \ \z \in L^\infty(\Gamma) \right\} $$ $$ =- \inf \left\{F^*(L^*(\z)) + G^*(-\z) \ : \ \z \in L^\infty(\Gamma) \right\}.$$
Now, $L^*(\z) = - \z'$, $G^*(\z) = \Vert \z \Vert_{L^\infty(\Gamma)}$ and
$$F^*(w) = \sup_{u \in L^1(\Gamma)} \left\{\int w u dx +\int_\Gamma u \1_\Omega dx \ : \  u \in L^1(\Gamma) \right\}.$$
Hence
$$F^*(L^*(\z)) = \sup_{u \in L^1(\Gamma)} \left\{-\int \z' u dx +\int_\Gamma u \1_\Omega dx \ : \  u \in L^1(\Gamma) \right\}.$$
Therefore,
$$\frac{1}{ h_1^\Gamma(\Omega)}  =  \inf \left\{ \Vert \z \Vert_{L^\infty(\Gamma)} \ : \ \z \in L^\infty(\Gamma), \ \z' = \1_\Omega \right\},$$
from where it follows that
$$ h_1^\Gamma(\Omega) = \sup \left\{ \frac{1}{\Vert \z \Vert_{L^\infty(\Gamma)}} \ : \ \z \in L^\infty(\Gamma), \ \z' = \1_\Omega \right\} $$ $$\leq \sup \left\{\frac{1}{\Vert \z \Vert_\infty }  \ : \ \z \in X_K(\Gamma), \ \z' = 1  \ \hbox{in} \ \Omega \right\} \leq \alpha,$$
and we finish the proof.
\end{proof}

Let us recall that, in the local case, a set $\Omega \subset \R^N$ is called {\it calibrable} if
$$\frac{\mbox{Per}(\Omega )}{\vert \Omega  \vert}  = h(\Omega):= \inf \left\{ \frac{\mbox{Per}(E)}{\vert E\vert} \ : \ E \subset \Omega , \ E \ \hbox{ with finite perimeter,} \ \vert E \vert > 0 \right\}.$$ The following characterization  of convex calibrable sets is proved in \cite{ACCh}.
\begin{theorem}\label{BCNth}(\cite{ACCh})
Given a bounded convex set $\Omega \subset \R^N$ of class $C^{1,1}$, the following assertions are equivalent:
\item(a) $\Omega $ is calibrable.
\item(b) $\1_\Omega $ satisfies $- \Delta_1 \1_\Omega   = \frac{\mbox{Per}(\Omega )}{\vert \Omega  \vert} \1_\Omega $, where $\Delta_1 u:= {\rm div} \left( \frac{Du}{\vert Du \vert}\right)$.
\end{theorem}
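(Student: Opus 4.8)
This is the Alter--Caselles--Chambolle characterization, and I would read the eigenvalue equation in (b) through the subdifferential description of the $1$-Laplacian, exactly as in Theorem \ref{chasubd}. Writing $\lambda = {\rm Per}(\Omega)/\vert\Omega\vert$, the inclusion $-\Delta_1\1_\Omega = \lambda\1_\Omega$ means that there is a vector field $\z \in L^\infty(\R^N;\R^N)$ with $\Vert\z\Vert_\infty \le 1$ such that $-{\rm div}\,\z = \lambda\1_\Omega$ in $\mathcal{D}'(\R^N)$ and the Anzellotti pairing satisfies $\int (\z, D\1_\Omega) = {\rm Per}(\Omega)$ (equivalently $\z\cdot\nu_\Omega = 1$ $\mathcal{H}^{N-1}$-a.e. on $\partial\Omega$); such a $\z$ is a \emph{calibration}. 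The plan is to prove the two implications separately, the construction of this calibration being the crux.

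For (b) $\Rightarrow$ (a), I would argue by the calibration estimate. Given $\z$ as above and any finite-perimeter set $E \subset \Omega$ with $\vert E\vert > 0$, the relation $-{\rm div}\,\z = \lambda$ on $\Omega$ together with the Gauss--Green formula for the pairing $(\z, D\1_E)$ yields
$$\lambda\vert E\vert = -\int_E {\rm div}\,\z\,dx = \int_{\R^N}(\z, D\1_E) \le \int\vert D\1_E\vert = {\rm Per}(E),$$
since $\Vert\z\Vert_\infty \le 1$. Hence ${\rm Per}(E)/\vert E\vert \ge \lambda$ for every competitor, while equality holds at $E = \Omega$ by the choice of $\lambda$; therefore $h(\Omega) = \lambda = {\rm Per}(\Omega)/\vert\Omega\vert$ and $\Omega$ is calibrable.

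The implication (a) $\Rightarrow$ (b) is the hard part, since one must produce the calibration out of the bare optimality of the ratio, and this is where the two so far unused hypotheses --- convexity and $C^{1,1}$ regularity --- enter. First I would reformulate (b): exactly as in the paper's treatment of $\partial\mathcal{F}_\Gamma$, the inclusion $\lambda\1_\Omega \in \partial TV(\1_\Omega)$ is equivalent to $\1_\Omega$ minimizing $u \mapsto TV(u) - \lambda\int_\Omega u\,dx$, and, by the classical coarea formula and truncation, to the set inequality ${\rm Per}(E) \ge \lambda\,\vert E\cap\Omega\vert$ for \emph{every} finite-perimeter $E$; dually, (a) is by definition the same inequality restricted to $E \subset \Omega$. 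Thus the content of (a) $\Rightarrow$ (b) is to upgrade the inner inequality to arbitrary $E$, and here convexity is indispensable through the intersection inequality ${\rm Per}(E\cap\Omega) \le {\rm Per}(E)$ (valid for convex $\Omega$, via submodularity of the perimeter together with ${\rm Per}(\Omega) \le {\rm Per}(E\cup\Omega)$): for an arbitrary $E$ one then estimates ${\rm Per}(E) \ge {\rm Per}(E\cap\Omega) \ge \lambda\,\vert E\cap\Omega\vert$, the last step being calibrability applied to $E\cap\Omega \subset \Omega$. I expect the main obstacle to be precisely the reduction to the set inequality --- turning the global variational minimality into the family of geometric inequalities via coarea and handling sign-changing competitors by truncation --- while the role of the $C^{1,1}$ regularity is to guarantee that the calibration $\z$ (concretely $\nabla d$ for the signed distance $d$ to $\partial\Omega$, whose divergence is the mean curvature and hence bounded by $\lambda$) is a genuine admissible field, so that the abstract inclusion of (b) is realized by an $L^\infty$ vector field.
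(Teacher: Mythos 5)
Your proof is correct, but note that the paper itself contains no proof of this statement: Theorem \ref{BCNth} is quoted from \cite{ACCh} as a known result, serving as motivation for the metric-graph analogue, Theorem \ref{falsa}. Your argument is precisely the strategy the paper uses to prove that analogue: your (b) $\Rightarrow$ (a) calibration estimate is part (i) of Theorem \ref{falsa} (there phrased through the dual norm $\Vert\cdot\Vert_{m,*}$ and \eqref{oole} rather than an explicit integration by parts), and your (a) $\Rightarrow$ (b) argument --- coarea reduction to the set inequality ${\rm Per}(E)\ge\lambda\,\vert E\cap\Omega\vert$, the intersection inequality for convex sets, and splitting sign-changing competitors into positive and negative parts --- is word for word the proof of part (ii), with convexity replaced by the paper's notion of path-convexity. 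One further observation: your closing remark about $C^{1,1}$ regularity being needed to make $\nabla d$ an admissible field is a red herring in your own argument, since the vector field $\z$ is produced abstractly by the subdifferential characterization and never constructed explicitly; convexity alone carries the equivalence (a) $\Leftrightarrow$ (b), which is consistent with the paper's graph version requiring only path-convexity, and the $C^{1,1}$ hypothesis in \cite{ACCh} is really needed for the further mean-curvature characterization of calibrability, not for this equivalence.
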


\begin{remark}\label{vp}{\rm By \eqref{saiiN}, we have
$$- \lambda^\Gamma_\Omega \1_\Omega \in \Delta_1^\Gamma \1_\Omega  \iff
 \Vert \lambda^\Gamma_\Omega \1_\Omega \Vert_{m,*} \leq 1, \ \hbox{and} \int_\Gamma \lambda^\Gamma_\Omega \1_\Omega \1_\Omega = TV_\Gamma(\1_\Omega). $$
Now
$$ \int_\Gamma \lambda^\Gamma_\Omega \1_\Omega \1_\Omega = \lambda^\Gamma_\Omega \ell(\Omega) = TV_\Gamma(\1_\Omega).$$
Therefore, we have
\begin{equation}\label{oole}
\begin{array}{ll}
- \lambda^\Gamma_\Omega \1_\Omega \in \Delta_1^\Gamma \1_\Omega  \iff \Vert \lambda^\Gamma_\Omega \1_\Omega \Vert_{m,*} \leq 1  \\[10pt]\iff  \sup \left\{ \left\vert \displaystyle\int_\Omega u(x) dx \right\vert   : u \in BV(\Gamma), \ TV_{\Gamma}(u) \leq 1 \right\} \leq \frac{\ell(\Omega)}{{\rm Per}_\Gamma(\Omega)}.
\end{array}
\end{equation}
$\blacksquare$
}
\end{remark}

In order to get a similar result to Theorem \ref{BCNth} we need the following concept of convexity.

\begin{definition}{\rm We say that $\Omega \subset \Gamma$ is {\it  path-convex} if for any $E \subset \Gamma$,
$$\mbox{Per}_\Gamma(\Omega \cap E) \leq \mbox{Per}_\Gamma(E).$$
}
\end{definition}

We have the following version of Theorem~\ref{BCNth}.

 \begin{theorem}\label{falsa}    Let $\Omega \subset \Gamma$ be with $0<\ell(\Omega)<\ell(\Gamma)$.  We have:
 \item{ (i)} If  $\1_\Omega$ satisfies
\begin{equation}\label{22alares} - \lambda^\Gamma_\Omega \1_\Omega \in \Delta_1^\Gamma \1_\Omega \, \quad \hbox{in} \ \Gamma,
\end{equation}
then   $\Omega$ is $\Gamma$-calibrable.

\item{ (i)}   If $\Omega$ is  path-convex  and $\Omega$ is $\Gamma$-calibrable, then equation \eqref{22alares} holds.
\end{theorem}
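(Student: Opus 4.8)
The plan is to prove the two implications separately, relying on the perimeter formula \eqref{forper} for the first and on the coarea formula \eqref{coaerea} together with path-convexity for the second.

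For (i), I would begin by unwinding the definition of the $1$-Laplacian. The hypothesis \eqref{22alares} supplies a field $\z \in X_K(\Gamma)$ with $\Vert \z \Vert_{L^\infty(\Gamma)} \leq 1$ and $\z' = - \lambda^\Gamma_\Omega \1_\Omega$. Since the infimum defining $h_1^\Gamma(\Omega)$ ranges over subsets of $\Omega$ and $\Omega$ is itself admissible, the inequality $h_1^\Gamma(\Omega) \leq \lambda^\Gamma_\Omega$ is automatic, so only the reverse needs proof. Given any $E \subset \Omega$ with $\ell(E) > 0$, I would test the perimeter formula \eqref{forper} with $-\z$, which again lies in $X_K(\Gamma)$ with sup-norm at most $1$, to obtain ${\rm Per}_\Gamma(E) \geq \left\vert \int_E (-\z)'\,dx \right\vert = \lambda^\Gamma_\Omega \int_E \1_\Omega\,dx = \lambda^\Gamma_\Omega \ell(E)$, the last equality using $E \subset \Omega$. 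Dividing by $\ell(E)$ and taking the infimum over $E$ gives $h_1^\Gamma(\Omega) \geq \lambda^\Gamma_\Omega$, hence equality, i.e.\ $\Gamma$-calibrability.

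For (ii), I would invoke the equivalence \eqref{oole}, which reduces the desired conclusion \eqref{22alares} to the estimate $\left\vert \int_\Omega u\,dx \right\vert \leq \ell(\Omega)/{\rm Per}_\Gamma(\Omega)$ for every $u \in BV(\Gamma)$ with $TV_\Gamma(u) \leq 1$; by homogeneity this is the same as $\lambda^\Gamma_\Omega \left\vert \int_\Omega u\,dx \right\vert \leq TV_\Gamma(u)$ for all $u \in BV(\Gamma)$. To establish the latter I would first bound $\int_\Omega u$ from above: writing $E_t(u) = \{ u > t \}$, Cavalieri's principle yields $\int_\Omega u\,dx \leq \int_0^\infty \ell(\Omega \cap E_t(u))\,dt$, the negative-$t$ contribution being nonnegative and subtracted. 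Because $\Omega$ is $\Gamma$-calibrable, $h_1^\Gamma(\Omega) = \lambda^\Gamma_\Omega$, so every $F := \Omega \cap E_t(u) \subset \Omega$ obeys $\ell(F) \leq \frac{1}{\lambda^\Gamma_\Omega} {\rm Per}_\Gamma(F)$; and path-convexity gives ${\rm Per}_\Gamma(\Omega \cap E_t(u)) \leq {\rm Per}_\Gamma(E_t(u))$. Chaining these and integrating, the coarea formula \eqref{coaerea} produces $\int_\Omega u\,dx \leq \frac{1}{\lambda^\Gamma_\Omega} \int_0^\infty {\rm Per}_\Gamma(E_t(u))\,dt \leq \frac{1}{\lambda^\Gamma_\Omega} TV_\Gamma(u)$. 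Applying the same argument to $-u$, and using that $TV_\Gamma$ is even, controls $-\int_\Omega u$ as well, which gives the two-sided bound.

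The routine portions are the subdifferential bookkeeping in (i) and Cavalieri's principle in (ii). The crucial step, and the one where the hypotheses are genuinely used, is the chain $\ell(\Omega \cap E_t(u)) \leq \frac{1}{\lambda^\Gamma_\Omega} {\rm Per}_\Gamma(\Omega \cap E_t(u)) \leq \frac{1}{\lambda^\Gamma_\Omega} {\rm Per}_\Gamma(E_t(u))$ in (ii): the first inequality encodes calibrability and the second is exactly the definition of path-convexity, so the proof makes transparent why path-convexity is the right hypothesis. I expect the only real subtlety to be the careful treatment of signs in the Cavalieri decomposition and the observation that sets of zero length are handled trivially, since perimeters are nonnegative.
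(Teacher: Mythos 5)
Your proof is correct and is, in substance, the paper's proof. For part (ii) the two arguments coincide: both reduce the conclusion, via \eqref{oole}, to the bound $\lambda^\Gamma_\Omega \left\vert \int_\Omega u\,dx \right\vert \leq TV_\Gamma(u)$, and both obtain it from exactly the chain $\lambda^\Gamma_\Omega\,\ell(\Omega\cap E_t(u)) \leq {\rm Per}_\Gamma(\Omega\cap E_t(u)) \leq {\rm Per}_\Gamma(E_t(u))$ (calibrability, then path-convexity) integrated against the coarea formula \eqref{coaerea}; the only cosmetic difference is that you symmetrize over $u$ and $-u$, where the paper splits $w$ into its positive and negative parts. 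In part (i) you take a slightly more direct route: you use only the field equation, i.e.\ the existence of $\z\in X_K(\Gamma)$ with $\Vert\z\Vert_{L^\infty(\Gamma)}\leq 1$ and $\z'=-\lambda^\Gamma_\Omega\1_\Omega$, and plug this field into the supremum defining the perimeter in \eqref{forper} to get $\lambda^\Gamma_\Omega\,\ell(E)\leq {\rm Per}_\Gamma(E)$ for every $E\subset\Omega$; the paper instead invokes the dual characterization \eqref{oole} and tests it with $u=\1_E/{\rm Per}_\Gamma(E)$. These are two sides of the same duality and produce the identical inequality, but your version has the small advantage of making explicit that the pairing condition \eqref{1eqq3} in the definition of $\Delta_1^\Gamma$ is never used in (i), so the implication holds under the weaker hypothesis that a calibration field exists. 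One caveat, which you inherit from the paper rather than introduce: since every $\z\in X_K(\Gamma)$ satisfies $\int_\Gamma \z'=0$, the equation $\z'=-\lambda^\Gamma_\Omega\1_\Omega$ is literally unsatisfiable, and likewise the bound $\lambda^\Gamma_\Omega\vert\int_\Omega u\vert\leq TV_\Gamma(u)$ fails for nonzero constants; the paper's intended reading (Remark \ref{more1}) replaces $\1_\Omega$ on the left of \eqref{22alares} by a function $\xi$ equal to $1$ on $\Omega$, and your argument for (i) survives that replacement verbatim because you only ever evaluate $\z'$ on subsets of $\Omega$.
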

\begin{proof} $(i)$: For any $E \subset \Omega$ with $\ell( E)>0$, applying \eqref{oole} with $u:= \frac{\1_E}{{\rm Per}_\Gamma(E)}$, we have
$$\int_\Omega \frac{\1_E}{{\rm Per}_\Gamma(E)} \leq \frac{\ell(\Omega)}{{\rm Per}_\Gamma(\Omega)}.$$
Then,
$$\frac{{\rm Per}_\Gamma(\Omega)}{\ell(\Omega)} \leq \frac{{\rm Per}_\Gamma(E)}{\ell(E)},$$
and consequently  $\Omega$ is $\Gamma$-calibrable.

   \noindent $(ii)$: By \eqref{oole}, we need to show that Let us prove that the function $f := \lambda^\Gamma_\Omega \1_\Omega$ satisfies $\Vert f \Vert_{m,*} \leq 1$.  Indeed, if $w \in BV(\Gamma) \cap L^2(\Gamma)$ is nonnegative, by the coarea formula, we have
   $$\int_\Gamma f(x) w(x) dx = \int_0^\infty \int_\Gamma  \lambda^\Gamma_\Omega \1_\Omega \1_{E_t(w)} dx dt  =  \int_0^\infty  \lambda^\Gamma_\Omega  \, \ell(\Omega \cap E_t(w)) dt $$ $$ \leq  \int_0^\infty {\rm Per}_\Gamma(\Omega \cap E_t(w)) dt \leq \int_0^\infty {\rm Per}_\Gamma(E_t(w)) dt = TV_\Gamma (w).$$

   Splitting any function $w \in BV(\Gamma)$ into its positive and negative part, using the
above inequality one can prove that
$$\left\vert \int_\Gamma f(x) w(x) dx \right\vert \leq TV_\Gamma (w),$$
from where it follows that $\Vert f \Vert_{m,*} \leq 1$.
\end{proof}

\begin{remark}\label{more1}{\rm  (i) Note that in equation \eqref{22alares} we can change $\1_\Omega$ for a function $\xi$ such that $\xi(x) = 1$ for every $x \in \Omega$.

\noindent (ii) Let us see that this assumption $\Omega$  path-convex  is necessary for $(ii)$. For that let us give an example of a set $\Gamma$-calibrable not path-convex that verifies \eqref{22alares} but not (ii).

Consider the metric graph $\Gamma$ with two vertices and one edges, that is $V(\Gamma) = \{\v_1, \v_2 \}$ and $E(\Gamma) = \{ \e:=[\v_1, \v_2] \}$, with $\ell_{\e} =5$. Let $\Omega:= [c_\e^{-1}(1), c_\e^{-1}(2)] \cup [c_\e^{-1}(3), c_\e^{-1}(4)]$

\begin{center}

\begin{tikzpicture}
 \tikzstyle{gordito2} = [line width=3]
\tikzstyle{gordito2} = [line width=3]
\node (v1) at (-5,-1.2) {};
\node (v5) at (0,-1.2) {};
\node[below] at (v1) {$\v_1$};
\node[below] at (v5) {$\v_2$};
\draw[gordito2]  (-5.1,-1) node (v2) {} circle (0.05);
\draw[gordito2]  (0,-1) node (v4) {} circle (0.05);
\draw[->,line width=1.2]  (-5.1,-1)--(-2.5,-1);
\draw[line width=1.2]  (-2.5,-1) --(0,-1);
\node at (-4,-1) {$|$};
\node at (-1,-1) {$|$};
\node at (-2.7,-1.9) {$\Omega$};
\node at (-2.5,-0.5) {$\e_1$};
\draw[->] (-3.1,-1.9) .. controls (-3.5,-1.6) and (-3.6,-1.3) .. (-3.6,-1.3);
\draw[->] (-2.3,-1.9) .. controls (-1.7,-1.6) and (-1.5,-1.3) .. (-1.5,-1.3);
\node at (-2,-1) {$|$};
\node at (-3,-1) {$|$};
\end{tikzpicture}
\end{center}

 If $E \subset \Omega$, with ${\rm Per}_\Gamma(E) >0,  \ell( E)>0$, then obviously, ${\rm Per}_\Gamma(E) \geq {\rm Per}_\Gamma(\Omega)$. Hence, $\Omega$ is $\Gamma$-calibrable. On the other hand, if $E:= [c_\e^{-1}(1), \v_2]$, we have

 $${\rm Per}_\Gamma(\Omega \cap E) = 4 > {\rm Per}_\Gamma( E) =1.$$
 Thus, $\Omega$ is not path-convex. Now by \cite[Theorem 2.11]{BF} (see also \cite{Chang0}),  $\1_\Omega$ does not satisfy
\begin{equation}\label{22alaresN} - \lambda^\Gamma_\Omega \1_\Omega \in \Delta_1^\Gamma \1_\Omega \, \quad \hbox{in} \ \Gamma,
\end{equation}
since if equation \eqref{22alaresN} has a solution, then, $\Omega$ must  be of the form $\Omega = [\a, \b]$.
}$\blacksquare$
\end{remark}

A celebrated result of De Giorgi (\cite{Giorgi}) states that, if $E$ is a set of finite perimeter
in $\R^N$ , and $E^*$  is a ball such that $|E^*| = |E|$, then ${\rm Per}(E^*) \leq {\rm Per}(E)$, with equality holding if and only if $E$ is itself a ball. This implies that
$$h(\Omega^*) \leq h(\Omega).$$

In the next example we will see that this isoperimetric inequality is not true in metric graphs.

\begin{example}\label{isop}{\rm Consider the metric graph $\Gamma$ of the Example \ref{goodexam}, that is, $V(\Gamma) = \{\v_1, \v_2, \v_3, \v_4 \}$ and $E(\Gamma) = \{ \e_1:=[\v_1, \v_2], \e_2:=[\v_2, \v_3], \e_3:=[\v_3, \v_4] \}$, with $\ell_{\e_1} =2$, $\ell_{\e_i} =1$, $i=2,3$. If $E:= [\v_1, c_{\e_1}^{-1}(\frac32)]$, we have $\ell(E) = \frac32 =  \ell B \left(\v_2, \frac12 \right)$. Now,
$${\rm Per}_\Gamma(E) =1, \quad \hbox{and} \quad {\rm Per}_\Gamma \left(B\left(\v_2, \frac12 \right)\right) =3.$$

}$\blacksquare$
\end{example}

\section{The Eigenvalue Problem for the minus $1$-Laplacian in Metric Graphs}\label{Eigenpair}

In this section we introduce the eigenvalue problem associated with the operator $-\Delta^\Gamma_1$ and its relation with the Cheeger minimization problem.

Recall that

$${\rm sign}(u)(x):=  \left\{ \begin{array}{lll} 1 \quad \quad &\hbox{if} \ \  u(x) > 0, \\ -1 \quad \quad &\hbox{if} \ \ u(x) < 0, \\ \left[-1,1\right] \quad \quad &\hbox{if} \ \ \ u = 0. \end{array}\right.$$

\begin{definition}{
A pair $(\lambda, u) \in \R \times BV(\Gamma)$ is called an {\it $\Gamma$-eigenpair} of the operator $-\Delta^\Gamma_1$ on $X$ if $\Vert u \Vert_{L^1(\Gamma)} = 1$ and    there exists $\xi \in {\rm sign}(u)$  (i.e., $\xi(x) \in {\rm sign}(u(x))$ for every $x\in \Gamma$)  such that
$$
\lambda \, \xi \in \partial \mathcal{F}_\Gamma(u) = - \Delta^\Gamma_1 u.
$$The function $u$ is called an {\it $\Gamma$-eigenfunction} and $\lambda$ an {\it $\Gamma$-eigenvalue} associated to $u$.
}
\end{definition}

   Observe that, if $(\lambda, u)$ is an $\Gamma$-eigenpair of $-\Delta^\Gamma_1$, then $(\lambda, - u)$ is also an $\Gamma$-eigenpair of $-\Delta^\Gamma_1$.

\begin{remark}\label{1257m}{\rm By Theorem \ref{chasubd}, the following statements  are equivalent:

\noindent (1) $(\lambda, u)$ is an  $\Gamma$-eigenpair of the operator  $-\Delta^\Gamma_1$ .

 \noindent (2) $u \in BV(\Gamma)$, $\Vert u \Vert_{L^1(\Gamma)} = 1$ and  there exists $\xi \in {\rm sign}(u)$  and $\z  \in X_K(\Gamma)$, $\Vert \z \Vert_{L^\infty(\Gamma)} \leq 1$ such that
\begin{equation}\label{eqq2Ei}
    \lambda \xi= -\z^{\prime},
\end{equation}
and
\begin{equation}\label{eqq1Ei}
\lambda  = TV_\Gamma (u);
\end{equation}

 \noindent (3)  $u \in BV(\Gamma)$, $\Vert u \Vert_{L^1(\Gamma)} = 1$ and  there exists  $\xi \in {\rm sign}(u)$  and  there exists $\z  \in X_K(\Gamma)$, $\Vert \z \Vert_{L^\infty(\Gamma)} \leq 1$  such that \eqref{eqq2Ei} holds and
\begin{equation}\label{eqq3Ei}
TV_\Gamma (u) = \int_{\Gamma}   \z Du  - \sum_{\v \in {\rm int}(V(\Gamma))}  \frac{1}{d_{\v}} \sum_{\e, \hat{\e} \in E_\v(\Gamma)} [\z]_{\e} (\v)([u]_{\e}(\v) - [u]_{\hat{\e}}(\v).
\end{equation}
}$\blacksquare$
\end{remark}

 \begin{proposition}\label{Peigen1}
  Let $(\lambda, u)$ be an $\Gamma$-eigenpair of  $-\Delta^\Gamma_1$. Then,
    \item(i) $\lambda = 0 \ \iff \ u \ \hbox{is constant, that is, $u= \frac{1}{\ell(\Gamma)}$, or $u=-\frac{1}{\ell(\Gamma)}$}.$

  \item(ii)   $\lambda \not= 0 \ \iff $  there exists $\xi \in {\rm sign}(u)$ such that
 $\displaystyle \int_\Gamma \xi(x) dx = 0.$
 \end{proposition}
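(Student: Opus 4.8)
The plan is to prove both biconditionals by exploiting the characterization in Remark \ref{1257m}, especially the identity $\lambda = TV_\Gamma(u)$ from \eqref{eqq1Ei}, together with Corollary \ref{cojonudo} and Green's formula \eqref{0intbpart}.

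For part (i), the forward direction is immediate: if $\lambda = 0$, then \eqref{eqq1Ei} gives $TV_\Gamma(u) = 0$, so by \eqref{conssttLinear} in Corollary \ref{cojonudo} the function $u$ is constant; since $\Vert u \Vert_{L^1(\Gamma)} = 1$, the constant must be $\pm \frac{1}{\ell(\Gamma)}$. Conversely, if $u \equiv \frac{1}{\ell(\Gamma)}$ (or its negative), then $TV_\Gamma(u) = 0$, and one checks directly that $(0,u)$ is an eigenpair: take $\z \equiv 0 \in X_K(\Gamma)$ so that $-\z' = 0 = \lambda \xi$ for $\lambda = 0$ and any $\xi \in \mathrm{sign}(u)$, and \eqref{eqq3Ei} holds trivially as $0 = 0$. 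Thus $\lambda = 0$.

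For part (ii), I would argue via the contrapositive of part (i): since $\lambda \neq 0$ is equivalent to $u$ not being constant, it suffices to connect the existence of a sign field with zero integral to non-constancy. For the forward implication, suppose $\lambda \neq 0$. By \eqref{eqq2Ei} there exist $\xi \in \mathrm{sign}(u)$ and $\z \in X_K(\Gamma)$ with $\lambda \xi = -\z'$. Integrating over $\Gamma$ and applying Green's formula, I would test against the constant function $1$: since $1 \in BV(\Gamma) \cap C(\mathrm{int}(V(\Gamma)))$, the boundary terms vanish and $\int_\Gamma \z' \, dx = 0$, hence
$$
\lambda \int_\Gamma \xi(x)\, dx = -\int_\Gamma \z'(x)\, dx = 0,
$$
and since $\lambda \neq 0$ we conclude $\int_\Gamma \xi\, dx = 0$. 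For the converse, suppose there exists $\xi \in \mathrm{sign}(u)$ with $\int_\Gamma \xi\, dx = 0$. I would argue that $u$ cannot be constant: if $u$ were the constant $\frac{1}{\ell(\Gamma)} > 0$, then every admissible $\xi \in \mathrm{sign}(u)$ satisfies $\xi \equiv 1$, forcing $\int_\Gamma \xi = \ell(\Gamma) \neq 0$, a contradiction (and symmetrically for the negative constant). Hence $u$ is non-constant, so by part (i) we have $\lambda \neq 0$.

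The main subtlety I anticipate is the converse of (ii): one must be careful that the hypothesis is the existence of \emph{some} $\xi \in \mathrm{sign}(u)$ with vanishing integral, not that every such $\xi$ has this property. The argument above handles this correctly because when $u$ is a nonzero constant, the sign field is \emph{uniquely determined} (equal to $+1$ or $-1$ everywhere), so no admissible $\xi$ can have zero integral; this rigidity is exactly what makes the contrapositive clean. I would make sure to state this uniqueness explicitly and to invoke part (i) to package the non-constancy into the conclusion $\lambda \neq 0$, rather than re-deriving it. The rest reduces to routine applications of the already-established Green's formula and the coarea-based characterizations.
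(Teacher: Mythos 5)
Your proposal follows essentially the same route as the paper's proof: part (i) via the identity $\lambda = TV_\Gamma(u)$ together with Corollary \ref{cojonudo}, the forward direction of (ii) by integrating $\lambda\xi = -\z^{\prime}$ over $\Gamma$ and using that $\int_\Gamma \z^{\prime}\,dx = 0$ for $\z \in X_K(\Gamma)$, and the converse of (ii) by the rigidity of ${\rm sign}(u)$ when $u$ is a nonzero constant (the paper phrases this as a contradiction with part (i), which is the same argument in contrapositive form).

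One small repair is needed in your converse of (i). You are given that $(\lambda, u)$ is an eigenpair and that $u$ is constant, and you must show that \emph{this particular} $\lambda$ is zero. Verifying that $(0,u)$ is \emph{also} an eigenpair (with $\z \equiv 0$) does not by itself accomplish this: a priori a single eigenfunction could carry two distinct eigenvalues, so the conclusion ``Thus $\lambda = 0$'' is a non sequitur as written. The fix is the one-liner the paper uses, and which you already announce in your plan: since $(\lambda,u)$ is an eigenpair, \eqref{eqq1Ei} forces $\lambda = TV_\Gamma(u) = 0$. (Equivalently, \eqref{eqq1Ei} shows the eigenvalue is uniquely determined by its eigenfunction, which would legitimize your detour --- but then the verification that $(0,u)$ is an eigenpair becomes superfluous.) Everything else in the proposal is correct and matches the paper.
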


  \begin{proof} (i) By \eqref{eqq1Ei}, if $\lambda = 0$, we have that $TV_\Gamma(u) =0$ and then, by \eqref{cojonudo}, we get that  $u$ is constant.  Thus, since $\Vert u\Vert_{L^1(\Gamma)}=1$, either $u= \frac{1}{\ell(\Gamma)}$, or $u=-\frac{1}{\ell(\Gamma)}$.  Similarly, if $u$ is constant a.e. then $TV_\Gamma(u) =0$ and, by  \eqref{eqq1Ei}, $\lambda =0$.

\noindent (ii) ($\Longleftarrow$) If $\lambda=0$, by (i), we have that $u= \frac{1}{\ell(\Gamma)}$, or $u=-\frac{1}{\ell(\Gamma)}$, and this is a contradiction with the existence of $\xi \in {\rm sign}(u)$ such that
 $  \int_\Gamma \xi(x) dx = 0$.

  \noindent($\Longrightarrow$) By Remark \ref{1257m} there exists $\xi \in {\rm sign}(u)$  and $\z  \in X_K(\Gamma)$, $\Vert \z \Vert_{L^\infty(\Gamma)} \leq 1$ satisfying \eqref{eqq2Ei}, \eqref{eqq2Ei} and \eqref{eqq3Ei}. Hence, by Green's formula \eqref{Kintbpart}, we have
  $$\lambda \int_\Gamma \xi(x) dx = - \int_\Gamma \z' =0 $$
  Therefore, since $\lambda \not= 0$,
$$\int_\Gamma \xi(x) dx =0.$$
  \end{proof}

   Recall that, given a function $u : \Gamma \rightarrow \R$,  $\mu \in \R$ is a {\it median} of $u$ with respect to a measure $\ell$ if
$$\ell(\{ x \in \Gamma \ : \ u(x) < \mu \}) \leq \frac{1}{2} \ell(\Gamma), \quad \ell(\{ x \in \Gamma \ : \ u(x) > \mu \}) \leq \frac{1}{2} \ell(\Gamma).$$
We denote by ${\rm med}_\ell (u)$ the set of all medians of $u$. It is easy to see that
$$\mu \in {\rm med}_\ell (u) \iff $$ $$ - \ell(\{ u = \mu \}) \leq \ell(\{ x \in \Gamma \ : \ u(x) > \mu \}) - \ell(\{ x \in \Gamma \ : \ u(x) < \mu \}) \leq \ell(\{ u = \mu \}),$$
from where it follows that
\begin{equation}\label{sigg1}
0 \in {\rm med}_\ell (u) \iff \exists \xi \in {\rm sign}(u) \ \hbox{such that} \ \int_\Gamma \xi(x) d x = 0,
\end{equation}

By Proposition \ref{Peigen1} and relation~\eqref{sigg1}, we have the following result that was   obtained for finite  weighted graphs by Hein and B\"uhler in \cite{HB}.

 \begin{corollary}\label{meddiaa}
 If  $(\lambda, u)$ is   an $\Gamma$-eigenpair of  $\Delta^m_1$ then
    $$ \lambda \not= 0\ \Longleftrightarrow \ 0 \in {\rm med}_\ell (u).$$
\end{corollary}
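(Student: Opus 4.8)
The plan is to obtain the claimed equivalence by simply composing the two facts established immediately beforehand, so that the quantity $\int_\Gamma \xi(x)\,dx = 0$ serves as the bridge connecting the spectral condition $\lambda \neq 0$ to the median condition $0 \in {\rm med}_\ell(u)$.

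Concretely, I would first invoke part (ii) of Proposition \ref{Peigen1}, which asserts that for an $\Gamma$-eigenpair $(\lambda, u)$ one has $\lambda \neq 0$ if and only if there exists $\xi \in {\rm sign}(u)$ with $\int_\Gamma \xi(x)\,dx = 0$. Next I would invoke relation \eqref{sigg1}, which states exactly that $0 \in {\rm med}_\ell(u)$ holds if and only if such a $\xi \in {\rm sign}(u)$ with $\int_\Gamma \xi(x)\,dx = 0$ exists. Since both equivalences share the same middle condition, chaining them yields
$$
\lambda \neq 0 \iff \exists\, \xi \in {\rm sign}(u) \ \hbox{with} \ \int_\Gamma \xi(x)\,dx = 0 \iff 0 \in {\rm med}_\ell(u),
$$
which is the desired statement.

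There is no genuine obstacle here: the corollary is a direct syllogism, and all the analytic content—the characterization of nonzero eigenvalues via a mean-zero subgradient selection (coming through Green's formula \eqref{Kintbpart} in the proof of Proposition \ref{Peigen1}) and the combinatorial reformulation of the median in \eqref{sigg1}—has already been discharged. The only point requiring any care is making sure the selection $\xi$ witnessing one equivalence is literally the same object as the one witnessing the other, which it is, since both are selections in ${\rm sign}(u)$ constrained by the identical integral condition $\int_\Gamma \xi = 0$; no further construction or estimate is needed.
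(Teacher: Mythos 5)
Your proposal is correct and is essentially the paper's own proof: the paper derives the corollary in exactly this way, citing Proposition \ref{Peigen1}(ii) together with relation \eqref{sigg1} and chaining the two equivalences through the common condition that some $\xi \in {\rm sign}(u)$ has $\int_\Gamma \xi(x)\,dx = 0$. No further argument is needed, and your remark that the witnessing selection $\xi$ is the same in both equivalences is precisely why the syllogism closes.
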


\begin{proposition} If $\left(\lambda^\Gamma_\Omega, \frac{1}{\ell(\Omega)}\1_\Omega \right)$ is a $\Gamma$-eigenpair of $-\Delta^\Gamma_1$, then $\Omega$ is $\Gamma$-calibrable.
\end{proposition}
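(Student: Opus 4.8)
The plan is to reduce the statement to Theorem~\ref{falsa}(i) together with Remark~\ref{more1}(i), the only real work being to strip off the normalizing factor $\frac{1}{\ell(\Omega)}$ and to rewrite the eigenpair relation in the form of equation \eqref{22alares}.

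First I would unravel the eigenpair hypothesis. By definition of $\Gamma$-eigenpair, since $\left(\lambda^\Gamma_\Omega, \frac{1}{\ell(\Omega)}\1_\Omega\right)$ is one, there exists $\xi \in {\rm sign}\left(\frac{1}{\ell(\Omega)}\1_\Omega\right) = {\rm sign}(\1_\Omega)$ with $\lambda^\Gamma_\Omega\, \xi \in \partial \mathcal{F}_\Gamma\left(\frac{1}{\ell(\Omega)}\1_\Omega\right)$. Because $\1_\Omega = 1$ on $\Omega$, the sign condition forces $\xi \equiv 1$ on $\Omega$ (while $\xi \in [-1,1]$ on $\Gamma \setminus \Omega$). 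Next I would use that $\mathcal{F}_\Gamma = TV_\Gamma$ is positively $1$-homogeneous, so its subdifferential is invariant under positive scaling of the argument; hence $\partial \mathcal{F}_\Gamma\left(\frac{1}{\ell(\Omega)}\1_\Omega\right) = \partial \mathcal{F}_\Gamma(\1_\Omega)$, equivalently $\Delta^\Gamma_1\left(\frac{1}{\ell(\Omega)}\1_\Omega\right) = \Delta^\Gamma_1 \1_\Omega$. Translating through the definition of $\Delta^\Gamma_1$, the eigenpair relation becomes
$$-\lambda^\Gamma_\Omega\, \xi \in \Delta^\Gamma_1 \1_\Omega, \qquad \xi \equiv 1 \ \hbox{on} \ \Omega,$$
which is precisely equation \eqref{22alares} in the variant permitted by Remark~\ref{more1}(i), where $\1_\Omega$ in the source slot is replaced by a function equal to $1$ on $\Omega$.

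I would then invoke Theorem~\ref{falsa}(i) (in its $\xi$-form) to conclude that $\Omega$ is $\Gamma$-calibrable. If one prefers a self-contained argument, it runs exactly as in the proof of Theorem~\ref{falsa}(i): from $\lambda^\Gamma_\Omega \xi \in \partial \mathcal{F}_\Gamma(\1_\Omega)$ and the characterization \eqref{saiiN} one obtains $\Vert \lambda^\Gamma_\Omega \xi \Vert_{m,*} \leq 1$; then for any $E \subset \Omega$ with $\ell(E) > 0$ (so ${\rm Per}_\Gamma(E) > 0$ by \eqref{conssttLinear1}, since $E \subset \Omega \subsetneq \Gamma$), testing the norm $\Vert \cdot \Vert_{m,*}$ against $u := \1_E / {\rm Per}_\Gamma(E)$, which satisfies $TV_\Gamma(u) = 1$, and using $\xi \equiv 1$ on $E \subset \Omega$ gives $\lambda^\Gamma_\Omega\, \ell(E) \leq {\rm Per}_\Gamma(E)$, i.e. $\lambda^\Gamma_\Omega \leq {\rm Per}_\Gamma(E)/\ell(E)$. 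Taking the infimum over $E$ yields $\lambda^\Gamma_\Omega \leq h_1^\Gamma(\Omega)$, while the reverse inequality holds trivially by taking $E = \Omega$; hence $h_1^\Gamma(\Omega) = \lambda^\Gamma_\Omega$.

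The only delicate point is the homogeneity step that discards the factor $\frac{1}{\ell(\Omega)}$, together with the observation that replacing $\1_\Omega$ by $\xi$ in the source term is harmless precisely because $\xi \equiv 1$ on $\Omega$ and hence on every competitor $E \subset \Omega$; everything else is a direct application of the already-established characterizations \eqref{saiiN} and \eqref{oole}.
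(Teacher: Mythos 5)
Your proposal is correct and takes essentially the same route as the paper: the paper also unpacks the eigenpair hypothesis (via the subdifferential characterization in Remark \ref{1257m}) to obtain $\xi \in {\rm sign}(\1_\Omega)$ with $\xi = 1$ on $\Omega$ and a field $\z$, observes that by homogeneity the same pair witnesses $-\lambda^\Gamma_\Omega \xi \in \Delta_1^\Gamma \1_\Omega$, and then concludes by Theorem \ref{falsa}(i) together with Remark \ref{more1}(i). Your abstract scale-invariance argument for $\partial \mathcal{F}_\Gamma$ and your optional self-contained rerun of the calibrability computation are simply explicit versions of steps the paper carries out implicitly with the concrete $\z$.
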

\begin{proof} By Remark \ref{1257m} there exists $\xi \in {\rm sign}(\1_\Omega)$  and $\z  \in X(\Gamma)$, $\Vert \z \Vert_{L^\infty(\Gamma)} \leq 1$ satisfying
   $$ \lambda^\Gamma_\Omega \xi= -\z^{\prime},\quad \mathcal{F}_\Gamma \left(\frac{1}{\ell(\Omega)}\1_\Omega \right) =  \lambda^\Gamma_\Omega.$$
   Then, since $\xi =1$ in $\Omega$ and verifies
    $$ \lambda^\Gamma_\Omega \xi= -\z^{\prime},\quad \mathcal{F}_\Gamma(\1_\Omega) = {\rm Per}_\Gamma (\Omega),$$
    we have
    $$- \lambda^\Gamma_\Omega \xi \in \Delta_1^\Gamma \1_\Omega \, \quad \hbox{in} \ \Gamma.$$
    Then, by Theorem \ref{falsa} and having in mind Remark \ref{more1}, we get that $\Omega$ is $\Gamma$-calibrable.
\end{proof}

In the next example we see that, in the above proposition, the reverse implication is false in general.

\begin{example}{\rm Consider the metric graph $\Gamma$ with two vertices and one edge, that is $V(\Gamma) = \{\v_1, \v_2 \}$ and $E(\Gamma) = \{ \e:=[\v_1, \v_2] \}$, with $\ell_{\e} =6$. Let $\Omega:= [c_\e^{-1}(1), c_\e^{-1}(5)]$.

\begin{center}

\begin{tikzpicture}
  \tikzstyle{gordito2} = [line width=3]
\node (v1) at (-5,-1.2) {};
\node (v5) at (1,-1.2) {};
\node[below] at (v1) {$\v_1$};
\node[below] at (v5) {$\v_2$};
\draw[gordito2]  (-5.1,-1) node (v2) {} circle (0.05);
\draw[gordito2]  (1,-1) node (v4) {} circle (0.05);
\draw[->,line width=1.2]  (-5.1,-1)--(-2.5,-1);
\draw[line width=1.2]  (-2.5,-1) --(1,-1);
\node at (0,-1) {$|$};
\node at (-1,-1) {$|$};
\node at (-2,-1) {$|$};
\node at (-3,-1) {$|$};
\node at (-4,-1) {$|$};

\node at (-2.2,-1.9) {$\Omega$};
\node at (-2.5,-0.5) {$\e_1$};
\draw[->] (-2.5,-1.9) .. controls (-3.5,-1.8) and (-3.8,-1.6) .. (-4,-1.3);
\draw[->] (-1.9,-1.9) .. controls (-1,-1.9) and (-0.1,-1.8) .. (0,-1.3);
\node at (0,-1) {};
\node at (0,-1) {};
\node at (0,-1) {};
\node at (0,-1) {};
\node at (0,-1) {};
\node at (0,-1) {};
\node at (1,-1) {};
\node at (1,-1) {};

\end{tikzpicture}
\end{center}

Obviously, $\Omega$ is $\Gamma$-calibrable. Now, since $0 \not\in  {\rm med}_\nu (\1_\Omega)$, by Corollary \ref{meddiaa}, we have $\left(\lambda^\Gamma_\Omega, \frac{1}{\ell(\Omega)}\1_\Omega \right)$ is not a $\Gamma$-eigenpair of $\Delta^\Gamma_1$
} $\blacksquare$
\end{example}

\section{The Cheeger cut  in Metric Graphs}\label{Cheegerconstant}

We defined the {\it $\Gamma$-Cheeger constant} of $\Gamma$ as
$$
 h(\Gamma):= \inf \left\{ \frac{{\rm Per}_\Gamma(D)}{\min\{ \ell(D), \ell(\Gamma \setminus D)\}} \ : \  D \subset \Gamma, \ 0 < \ell(D) < \ell(\Gamma), \right\} $$
 or, equivalently,
 \begin{equation}\label{1523m}
 h(\Gamma)= \inf \left\{ \frac{{\rm Per}_\Gamma(D)}{\ell(D)} \ : \  D \subset \Gamma, \ 0 < \ell(D) \leq \frac{1}{2} \ell(\Gamma)\right\}.
\end{equation}

A partition $(D, \Gamma \setminus D)$ of $\Gamma$ is called a {\it  Cheeger cut} of $\Gamma$ if $D$ is a minimizer of problem \eqref{1523m}, i.e., if $0 < \ell(D) \leq \frac{1}{2} \ell(\Gamma)$ and $h(\Gamma) = \frac{{\rm Per}_\Gamma(D)}{\ell(D)}$.

Note that if  $D \subset \Gamma$, $0 < \ell(D) \leq \frac{1}{2} \ell(\Gamma)$, we have
$$\frac{{\rm Per}_\Gamma(D)}{\ell(D)} \geq \frac{1}{\frac12 \ell(\Gamma)} = \frac{2}{\ell(\Gamma)},
$$
and therefore
$$
 h(\Gamma) \geq \frac{2}{\ell(\Gamma)}.
$$

   We will now  give a variational characterization of the Cheeger constant  which for finite weighted graphs was obtained in \cite{SB1} (see also \cite{MST1}). For Riemann compact manifolds the first result of this pype was obtaine by Yau in \cite{Yau}.

\begin{theorem}\label{varchar} We have
\begin{equation}\label{minnb} h(\Gamma) =\lambda_1(\Gamma)  := \inf \left\{ TV_\Gamma(u) \ : \ u \in\Pi(\Gamma) \right\},\end{equation}
where $$\Pi(\Gamma):= \left\{ u \in BV(\Gamma)  \ :  \ \Vert u \Vert_1 = 1, \ 0 \in {\rm med}_\ell (u) \right\}.$$
Moreover, there exists a minimizer $u$ of the problem \eqref{minnb} and also  $t \geq 0$, such that $E_t(u)$ is a Cheeger cut of $\Gamma$.
\end{theorem}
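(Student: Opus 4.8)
The plan is to establish the two inequalities $\lambda_1(\Gamma)\le h(\Gamma)$ and $h(\Gamma)\le\lambda_1(\Gamma)$ separately, then to obtain the minimizer by the direct method, and finally to read off a Cheeger cut from the case of equality in the coarea formula. For $\lambda_1(\Gamma)\le h(\Gamma)$ I would argue that every admissible $D$ in \eqref{1523m}, i.e. with $0<\ell(D)\le\frac12\ell(\Gamma)$, produces a competitor: setting $u:=\frac{1}{\ell(D)}\1_D$ gives $\|u\|_1=1$, and since $u\ge 0$ with $\{u>0\}=D$ of length at most $\frac12\ell(\Gamma)$ one has $0\in{\rm med}_\ell(u)$, so $u\in\Pi(\Gamma)$; as $TV_\Gamma(u)=\mathrm{Per}_\Gamma(D)/\ell(D)$, taking the infimum over $D$ yields the claim.

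For the reverse inequality, take any $u\in\Pi(\Gamma)$ and apply the coarea formula (Theorem \ref{coarea1}), $TV_\Gamma(u)=\int_{-\infty}^{\infty}\mathrm{Per}_\Gamma(E_t(u))\,dt$. The median condition gives $\ell(\{u>0\})\le\frac12\ell(\Gamma)$ and $\ell(\{u<0\})\le\frac12\ell(\Gamma)$; hence for $t\ge 0$ one has $E_t(u)\subseteq\{u>0\}$, so the minority side is $E_t(u)$ itself, while for $t<0$ the complement $\Gamma\setminus E_t(u)\subseteq\{u<0\}$ is the minority side. Using that each nontrivial $E_t(u)$ is admissible in \eqref{1523m}, together with the invariance $\mathrm{Per}_\Gamma(E_t(u))=\mathrm{Per}_\Gamma(\Gamma\setminus E_t(u))$ from \eqref{compl1} to handle the negative levels (and $\mathrm{Per}_\Gamma=0$ for the trivial levels by \eqref{conssttLinear1}), I obtain the pointwise bound $\mathrm{Per}_\Gamma(E_t(u))\ge h(\Gamma)\,\min\{\ell(E_t(u)),\ell(\Gamma\setminus E_t(u))\}$ for a.e. $t$. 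Integrating and evaluating the right-hand side by Cavalieri's principle, $\int_0^\infty\ell(\{u>t\})\,dt+\int_{-\infty}^0\ell(\{u\le t\})\,dt=\|u^+\|_1+\|u^-\|_1=\|u\|_1=1$, gives $TV_\Gamma(u)\ge h(\Gamma)$, and the infimum over $\Pi(\Gamma)$ yields $h(\Gamma)\le\lambda_1(\Gamma)$.

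Existence of a minimizer follows from the direct method: along a minimizing sequence $u_n$, the constraints $\|u_n\|_1=1$ and $TV_\Gamma(u_n)\to\lambda_1(\Gamma)$ bound $\|u_n\|_{BV(\Gamma)}$ through \eqref{Form1}, so the compact embedding (Theorem \ref{embedding}) yields $u_n\to u$ in $L^1(\Gamma)$ and a.e.; lower semicontinuity (Proposition \ref{lsc1}) gives $TV_\Gamma(u)\le\lambda_1(\Gamma)$ and $\|u\|_1=1$, and the median constraint passes to the limit by applying Fatou's lemma to $\1_{\{u_n>0\}}$ and $\1_{\{u_n<0\}}$, so $u\in\Pi(\Gamma)$ is a minimizer. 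For this minimizer every inequality above becomes an equality, so $\int_{-\infty}^{\infty}\bigl[\mathrm{Per}_\Gamma(E_t(u))-h(\Gamma)\min\{\ell(E_t(u)),\ell(\Gamma\setminus E_t(u))\}\bigr]\,dt=0$ with nonnegative integrand, whence the integrand vanishes for a.e. $t$. Since $\int_0^\infty\ell(E_t(u))\,dt=\|u^+\|_1$, which is positive after replacing $u$ by $-u$ if necessary (note $-u$ is again a minimizer in $\Pi(\Gamma)$), the set of $t\ge 0$ with $0<\ell(E_t(u))\le\frac12\ell(\Gamma)$ has positive measure, and on it $\mathrm{Per}_\Gamma(E_t(u))=h(\Gamma)\,\ell(E_t(u))$; any such $t$ makes $E_t(u)$ a Cheeger cut.

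\emph{Main obstacle.} The delicate step is the bookkeeping in the lower bound: correctly identifying the minority side of $E_t(u)$ on each sign of $t$ and invoking $\mathrm{Per}_\Gamma(E_t(u))=\mathrm{Per}_\Gamma(\Gamma\setminus E_t(u))$ so that the Cavalieri computation collapses exactly to $\|u\|_1$. The passage of the median condition to the $L^1$-limit and the $\pm u$ reduction guaranteeing a level $t\ge 0$ with a nontrivial superlevel set are the remaining points that require care.
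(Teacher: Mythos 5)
Your proof is correct and follows essentially the same route as the paper's: the easy inequality $\lambda_1(\Gamma)\le h(\Gamma)$ via normalized indicator functions of admissible sets, the direct method for existence of a minimizer, and the coarea/Cavalieri computation combined with ${\rm Per}_\Gamma(E)={\rm Per}_\Gamma(\Gamma\setminus E)$ to force the integrand ${\rm Per}_\Gamma(E_t(u))-h(\Gamma)\,\ell(E_t(u))$ to vanish a.e.\ and thereby produce a Cheeger cut from a superlevel set. If anything, your write-up is slightly more careful than the paper's own proof, since you justify passing the median constraint $0\in{\rm med}_\ell(u)$ to the $L^1$-limit (via Fatou applied to $\1_{\{u_n>0\}}$ and $\1_{\{u_n<0\}}$) and you handle the possibility $u^+\equiv 0$ by replacing $u$ with $-u$, whereas the paper's assertion that $u\not\equiv 0$ alone yields some $t\ge 0$ with $\ell(E_t(u))>0$ silently overlooks that case.
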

\begin{proof} If $D \subset \Gamma, \ 0 < \ell(D) \leq \frac{1}{2} \ell(\Gamma)$, then $0 \in {\rm med}_\ell (\1_D)$. Thus,
$$\lambda_1(\Gamma) \leq TV_\Gamma \left(\frac{1}{\ell(D)} \1_D \right) = \frac{1}{\ell(D)} {\rm Per}_\Gamma(D)$$
and, therefore,
$$\lambda_1(\Gamma) \leq  h(\Gamma).$$

On the other hand, by the Embedding Theorem (Theorem \ref{embedding}) and the lower semi-continuity of the total variation (Corollary \ref{lsc1}), applying the Direct Method of Calculus of Variation, we have that there exists a function $u \in L^1(\Gamma)$ such that $\Vert u \Vert_1 = 1$ and $0 \in {\rm med}_\ell (u)$, such that $TV_\Gamma(u) = \lambda_1(\Gamma)$. Now, since $0 \in {\rm med}_\nu (u)$, we have $\ell(E_t(u)) \leq \frac12 \ell(\Gamma)$ for $t \geq 0$ and $\ell(\Gamma \setminus E_t(u)) \leq \frac12 \ell(\Gamma)$ for $t \leq 0$. Then by the  Coarea formula (Theorem \ref{coarea1}), the Cavalieri's  Principle and having in mind that the set  $\{ t \in \R \ : \ \ell( \{ u = t \}) > 0 \}$ is countable, we have
$$0 \leq \int_0^\infty \left({\rm Per}_\Gamma(E_t(u)) - h(\Gamma) \ell(E_t(u))\right) dt + \int_{-\infty}^{0} \left( {\rm Per}_\Gamma(X \setminus E_t(u)) - h(\Gamma) \ell(X \setminus E_t(u)) \right)\, dt $$ $$=  \int_{-\infty}^{+\infty} {\rm Per}_\Gamma(E_t(u))\, dt - h(\Gamma) \left( \int_0^\infty \ell(E_t(u)) dt + \int_{-\infty}^{0} \ell(X \setminus E_t(u))  dt \right)  $$ $$= TV_\Gamma(u) - h(\Gamma)  \left(\int_\Gamma u^+(x) dx + \int_\Gamma u^-(x) dx \right)  = TV_\Gamma(u) - h(\Gamma)\Vert u \Vert_1 = TV_\Gamma(u) - h(\Gamma) \leq 0.$$
It follows that for almost every $t \geq 0$ (in the sense of the Lebesgue measure on $\R$),
\begin{equation}\label{CCneqqul2}
{\rm Per}_\Gamma(E_t(u)) - h(\Gamma) \ell(E_t(u)) = 0.
\end{equation}
Since $u\not\equiv 0$, there must exist $t \geq 0$ such that $\ell(E_t(u)) >0$ and for which \eqref{CCneqqul2} holds.
This yields at once
$$\lambda_1(\Gamma) \leq  h(\Gamma),$$
as well as that $E_t(u)$ is a Cheeger cut of $\Gamma$.
\end{proof}
\begin{corollary} We have
\begin{equation}\label{E1mmed}
h(\Gamma) = \min \left\{ \frac{TV_\Gamma (u)}{ \Vert u - \mu \Vert_1} \ : \ u \in L^1(\Gamma), \ \mu \in  {\rm med}_\ell (u)   \right\} = \inf \left\{ \sup_{c \in \R} \frac{TV_\Gamma(u)}{\Vert u - c \Vert_1} \ : \ u \in L^1(\Gamma) \right\}.
\end{equation}
\end{corollary}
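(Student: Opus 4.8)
The plan is to derive both identities directly from the variational characterization $h(\Gamma) = \lambda_1(\Gamma) = \inf\{TV_\Gamma(u) : u \in \Pi(\Gamma)\}$ established in Theorem \ref{varchar}, together with two elementary facts. The first is that $TV_\Gamma$ is invariant under the addition of constants, i.e. $TV_\Gamma(u + c) = TV_\Gamma(u)$ for every $c \in \R$; this follows from the definition of $TV_\Gamma$ together with the identity $\int_\Gamma \z' \, dx = 0$ for every $\z \in X_K(\Gamma)$, which in turn is a consequence of the Kirchhoff condition (grouping the boundary terms $\sum_{\e}\left([\z]_\e(\ell_\e) - [\z]_\e(0)\right)$ by vertices yields $\sum_{\v}\sum_{\e \in E_\v(\Gamma)} [\z]_\e(\v) = 0$). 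The second is the classical fact that a median $\mu \in {\rm med}_\ell(u)$ is exactly a minimizer of $c \mapsto \Vert u - c \Vert_1$ over $c \in \R$, so that $\Vert u - \mu \Vert_1 = \inf_{c \in \R} \Vert u - c \Vert_1$.

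For the first identity I would argue by matching the two families of admissible competitors. Given any non-constant $u \in L^1(\Gamma)$ (the constant case being excluded, since then $TV_\Gamma(u) = 0$ and $\Vert u - \mu \Vert_1 = 0$) and a median $\mu \in {\rm med}_\ell(u)$, set $v := (u - \mu)/\Vert u - \mu \Vert_1$. Since medians translate with the function, $0 \in {\rm med}_\ell(u - \mu)$, and scaling by a positive constant preserves this while forcing $\Vert v \Vert_1 = 1$, so $v \in \Pi(\Gamma)$. By translation invariance and positive homogeneity of $TV_\Gamma$,
$$\frac{TV_\Gamma(u)}{\Vert u - \mu \Vert_1} = \frac{TV_\Gamma(u - \mu)}{\Vert u - \mu \Vert_1} = TV_\Gamma(v) \geq \lambda_1(\Gamma) = h(\Gamma).$$
Conversely, every $w \in \Pi(\Gamma)$ gives the admissible pair $(w, 0)$ with $0 \in {\rm med}_\ell(w)$ and $\Vert w - 0 \Vert_1 = 1$, for which the quotient equals $TV_\Gamma(w)$; taking the infimum over $\Pi(\Gamma)$ recovers $h(\Gamma)$. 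Hence the infimum of the quotient equals $h(\Gamma)$, and it is attained because Theorem \ref{varchar} supplies a minimizer $u$ of $\lambda_1(\Gamma)$, which yields a minimizing pair $(u,0)$.

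For the second identity I would use that $TV_\Gamma(u)$ does not depend on $c$, so for each fixed $u$,
$$\sup_{c \in \R} \frac{TV_\Gamma(u)}{\Vert u - c \Vert_1} = \frac{TV_\Gamma(u)}{\inf_{c \in \R} \Vert u - c \Vert_1} = \frac{TV_\Gamma(u)}{\Vert u - \mu \Vert_1},$$
the last equality being the median-minimization fact above. Taking the infimum over $u \in L^1(\Gamma)$ then identifies this quantity with the middle expression, already shown to equal $h(\Gamma)$. The main obstacle is bookkeeping rather than depth: one must handle the degenerate constant functions (where numerator and denominator both vanish) by restricting to non-constant $u$, and one must confirm the translation invariance of $TV_\Gamma$ and the median-minimization property, neither of which is deep but both of which are essential to make the reduction to Theorem \ref{varchar} rigorous.
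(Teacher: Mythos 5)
Your proof is correct, and your handling of the first equality (translation invariance of $TV_\Gamma$ via $\int_\Gamma \z'\,dx=0$ for $\z\in X_K(\Gamma)$, positive homogeneity, and translation/scaling of medians) is precisely the ``simple calculation'' that the paper leaves implicit for its equation \eqref{E2mmed}. Where you genuinely diverge is the second equality. The paper proves it by two separate inequalities: the easy bound $h(\Gamma)=\lambda_1(\Gamma)\le\alpha$, where $\alpha$ is the inf--sup quantity, using only the trivial estimate $\frac{TV_\Gamma(u)}{\Vert u-\mu\Vert_1}\le\sup_{c}\frac{TV_\Gamma(u)}{\Vert u-c\Vert_1}$; and the reverse bound $\alpha\le h(\Gamma)$ by testing with the explicit competitor $v=\1_D-\1_{\Gamma\setminus D}$ built from a Cheeger cut $D$ with $\ell(D)\le\ell(\Gamma\setminus D)$ (whose existence comes from Theorem \ref{varchar}), and computing $\max_{\vert c\vert\le1}\frac{2{\rm Per}_\Gamma(D)}{(1-c)\ell(D)+(1+c)\ell(\Gamma\setminus D)}=\frac{{\rm Per}_\Gamma(D)}{\ell(D)}=h(\Gamma)$. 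You instead invoke the classical fact that the medians of $u$ with respect to $\ell$ are exactly the minimizers of $c\mapsto\Vert u-c\Vert_1$ (which does hold for the paper's definition of ${\rm med}_\ell$), so that for every non-constant $u$ one has the pointwise identity $\sup_{c\in\R}\frac{TV_\Gamma(u)}{\Vert u-c\Vert_1}=\frac{TV_\Gamma(u)}{\Vert u-\mu\Vert_1}$, and the second equality then collapses onto the first with no further work. Your route is more elementary and arguably cleaner: it requires no optimal cut and no computation with a two-valued test function, and you also make explicit the exclusion of constant functions, a $0/0$ degeneracy that the paper silently ignores. What the paper's route buys in exchange is that it never needs the median-minimization property at all, only the one-sided trivial inequality; both arguments ultimately rest on Theorem \ref{varchar} for $h(\Gamma)=\lambda_1(\Gamma)$ and for the attainment of the middle minimum.
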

\begin{proof}  A simple calculation show that
\begin{equation}\label{E2mmed}
\lambda_1(\Gamma) = \min \left\{ \frac{TV_\Gamma (u)}{ \Vert u - \mu \Vert_1} \ : \ u \in L^1(\Gamma), \ \mu \in  {\rm med}_\ell (u)   \right\}.
\end{equation}

Let $$\alpha:=  \inf \left\{ \sup_{c \in \R} \frac{TV_\Gamma(u)}{\Vert u - c \Vert_1} \ : \ u \in L^1(\Gamma) \right\}.$$

Given $u \in L^1(\Gamma)$ and  $\mu \in  {\rm med}_\ell (u)$, we have
$$\frac{TV_\Gamma (u)}{ \Vert u - \mu \Vert_1} \leq \sup_{c \in \R} \frac{TV_\Gamma(u)}{\Vert u - c \Vert_1},$$
hence
$$h(\Gamma) = \lambda_1(\Gamma) \leq \alpha.$$
To prove the another inequality, let $D \subset \Gamma$, with $\ell(D) \leq \ell(\Gamma \setminus D)$, such that $$ h(\Gamma) = \frac{{\rm Per}_\Gamma(D)}{\ell(D)}.$$
take $v:= \1_D - \1_{\Gamma \setminus D}$. Then,
$$\alpha = \inf \left\{ \sup_{c \in \R} \frac{TV_\Gamma(u)}{\Vert u - c \Vert_1} \ : \ u \in L^1(\Gamma) \right\} = \inf \left\{ \max_{\vert c \vert \leq 1} \frac{TV_\Gamma(u)}{\Vert u - c \Vert_1} \ : \ u \in L^1(\Gamma) \right\}$$ $$\leq \max_{\vert c \vert \leq 1} \frac{TV_\Gamma(v)}{\Vert v - c \Vert_1} = \frac{2{\rm Per}_\Gamma(D)}{(1 - c)\ell(D)+ (1 + c) \ell(\Gamma \setminus D)} \leq \frac{{\rm Per}_\Gamma(D)}{\ell(D)} = h(\Gamma).$$
\end{proof}

For finite weighted graphs, it is well known that the first non--zero eigenvalue coincides with the Cheeger constant (see \cite{Chang1})
This result is not true for in infinite weighted graphs (see  \cite{MST1}). In the next result we will see that this is true  in metric graphs.
\begin{theorem} We have
\begin{equation}\label{equiegen}
h(\Gamma) =\inf \{\lambda\not= 0, \ \hbox{such that} \ \lambda \ \hbox{is a $\Gamma$-eigenvalue of  $-\Delta^\Gamma_1$} \}.
 \end{equation}
 Moreover, $h(\Gamma)$ is the first non-zero $\Gamma$-eigenvalue of $- \Delta^\Gamma_1$ and if $u$ is a  minimizer of problem \eqref{minnb}, then, there exists $t \geq 0$, such that $E_t(u)$ is a Cheeger cut of $\Gamma$ and $$\left(h(\Gamma), \frac{1}{\ell(E_t(u))} \1_{E_t(u)} \right)$$ is a $\Gamma$-eigenpair of $-\Delta^\Gamma_1$.
\end{theorem}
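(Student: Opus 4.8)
The plan is to prove the two inequalities separately and to exhibit an explicit eigenfunction. First I would show that every nonzero $\Gamma$-eigenvalue is $\geq h(\Gamma)$, and then that $h(\Gamma)$ is itself attained as a $\Gamma$-eigenvalue through the Cheeger cut produced in Theorem \ref{varchar}; since $h(\Gamma) \geq \frac{2}{\ell(\Gamma)} > 0$, these two facts yield \eqref{equiegen} and show the infimum is a minimum. For the lower bound, let $(\lambda, u)$ be a $\Gamma$-eigenpair with $\lambda \neq 0$. By Corollary \ref{meddiaa}, $0 \in {\rm med}_\ell(u)$, and as $\Vert u \Vert_{L^1(\Gamma)} = 1$ this gives $u \in \Pi(\Gamma)$; by Remark \ref{1257m} we have $\lambda = TV_\Gamma(u)$, so Theorem \ref{varchar} yields $\lambda = TV_\Gamma(u) \geq \lambda_1(\Gamma) = h(\Gamma)$. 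Hence every nonzero $\Gamma$-eigenvalue is at least $h(\Gamma)$.

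For the realization, take a minimizer $u$ of \eqref{minnb} and the Cheeger cut $D := E_t(u)$ furnished by Theorem \ref{varchar}, so that $0 < \ell(D) \leq \frac12 \ell(\Gamma)$ and $\frac{{\rm Per}_\Gamma(D)}{\ell(D)} = h(\Gamma)$. Setting $w := \frac{1}{\ell(D)} \1_D$, we have $\Vert w \Vert_{L^1(\Gamma)} = 1$, $0 \in {\rm med}_\ell(w)$ and $TV_\Gamma(w) = \frac{{\rm Per}_\Gamma(D)}{\ell(D)} = h(\Gamma)$. By the description \eqref{saiiN} of $\partial \mathcal{F}_\Gamma$, proving that $(h(\Gamma), w)$ is a $\Gamma$-eigenpair reduces to finding $\xi \in {\rm sign}(w)$ with $\Vert h(\Gamma) \xi \Vert_{m,*} \leq 1$; the identity $\int_\Gamma w \, h(\Gamma)\xi = h(\Gamma) = TV_\Gamma(w)$ is automatic because $\xi = 1$ on $D$. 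Unwinding $\Vert \cdot \Vert_{m,*}$, this amounts to constructing $\z \in X_K(\Gamma)$ with $\Vert \z \Vert_{L^\infty(\Gamma)} \leq 1$, $\z' = -h(\Gamma)$ on $D$, and $\vert \z' \vert \leq h(\Gamma)$ on $\Gamma \setminus D$, and then taking $\xi := -\z'/h(\Gamma)$.

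Constructing this $\z$ is where the work lies, and the minimality of $D$ is essential. I would first observe that $D$ is calibrable as a subdomain, i.e. $h_1^\Gamma(D) = h(\Gamma)$: any $E \subset D$ has $\ell(E) \leq \frac12 \ell(\Gamma)$ and is therefore admissible in \eqref{1523m}, so ${\rm Per}_\Gamma(E)/\ell(E) \geq h(\Gamma)$. A Fenchel--Rockafellar duality argument parallel to the proof of Theorem \ref{igualconsts2}, combined with a weak-$*$ compactness argument using Proposition \ref{megusta1} to attain the supremum, produces a field $\z \in X_K(\Gamma)$ with $\Vert \z \Vert_{L^\infty(\Gamma)} \leq 1$ and $\z' \geq h(\Gamma)$ on $D$; the perimeter formula \eqref{forper} then forces $\int_D \z' = {\rm Per}_\Gamma(D) = h(\Gamma)\ell(D)$, whence $\z' = h(\Gamma)$ a.e. on $D$ (up to the sign we want). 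The delicate point, which I expect to be the main obstacle, is to arrange the capacity bound $\vert \z' \vert \leq h(\Gamma)$ on $\Gamma \setminus D$ at the same time; this is a flow-feasibility statement on $\Gamma \setminus D$, solvable precisely because the Cheeger balance $\ell(D) \leq \ell(\Gamma \setminus D)$ holds and the Kirchhoff condition can redistribute the flux.

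Finally, since $\z \in X_K(\Gamma)$ gives $\int_\Gamma \z' = 0$, the resulting $\xi = -\z'/h(\Gamma)$ satisfies $\int_\Gamma \xi = 0$ and $\xi \in {\rm sign}(w)$, so $(h(\Gamma), w)$ is a $\Gamma$-eigenpair. Together with the lower bound, this shows $h(\Gamma)$ is a nonzero $\Gamma$-eigenvalue dominated by all others, hence the first nonzero $\Gamma$-eigenvalue, so \eqref{equiegen} holds and $\left( h(\Gamma), \frac{1}{\ell(E_t(u))} \1_{E_t(u)} \right)$ is the desired $\Gamma$-eigenpair with $E_t(u)$ a Cheeger cut of $\Gamma$.
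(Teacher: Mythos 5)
Your skeleton matches the paper's: the lower bound (every nonzero $\Gamma$-eigenvalue dominates $h(\Gamma)$ via Corollary \ref{meddiaa}, Remark \ref{1257m} and Theorem \ref{varchar}) is exactly the paper's argument, and so is the idea of realizing $h(\Gamma)$ through the Cheeger cut $D = E_t(u)$, its calibrability, and the dual characterization of Theorem \ref{igualconsts2}. But at the decisive step --- producing the calibrating field $\z \in X_K(\Gamma)$ --- your argument has a genuine gap, which you yourself flag as ``the main obstacle'' and then dispose of by assertion. Two things go wrong with the formulation you chose (the inequality form $\z' \geq h$ on $D$, unconstrained off $D$). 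First, the compactness claim fails: to ``attain the supremum'' you invoke Proposition \ref{megusta1}, but that proposition requires weak $L^1$ convergence of the derivatives $\z_n'$, and the constraint set $\{\Vert \z_n\Vert_\infty \leq 1,\ \z_n' \geq h_n \hbox{ on } D\}$ gives no upper bound on $\z_n'$ on $D$ and no bound at all on $\Gamma\setminus D$; the weak$^*$ limit of $\z_n$ need not even lie in $W^{1,2}$ edgewise, so neither the constraint nor the Kirchhoff condition can be passed to the limit as claimed. Second, the capacity bound $\vert \z'\vert \leq h(\Gamma)$ on $\Gamma \setminus D$ is left unproved; the appeal to ``flow feasibility'' and the balance $\ell(D) \leq \ell(\Gamma\setminus D)$ is not an argument.

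Both problems disappear --- and this is how the paper proceeds --- if you use the other characterization already proved in Theorem \ref{igualconsts2}, namely $h_1^\Gamma(D) = \sup\left\{ 1/\Vert\z\Vert_\infty \ : \ \z \in X_K(\Gamma),\ \z' = \1_D \right\}$, with the constraint imposed as an exact equality. Then every member $\z_n$ of a maximizing sequence has the \emph{same} derivative $\1_D$, so the hypotheses of Proposition \ref{megusta1} hold trivially for the weak$^*$ limit $\z$; the Kirchhoff condition passes to the limit by testing Green's formulas \eqref{Kintbpart} and \eqref{intbpart} against functions supported near each vertex; weak$^*$ lower semicontinuity gives $\Vert\z\Vert_\infty \leq 1/h(\Gamma)$; and $\tilde{\z} := -h(\Gamma)\z$ satisfies $-\tilde{\z}' = h(\Gamma)\1_D$, i.e.\ $\xi = \1_D \in {\rm sign}(w)$, since the derivative off $D$ is exactly $0$, automatically within capacity --- no flux redistribution on $\Gamma\setminus D$ is needed at all. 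With this substitution your proof closes; as written, the key construction is asserted rather than proved.
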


\begin{proof} By  Corollary~\ref{meddiaa}, if $(\lambda, u)$ is  an $\Gamma$-eigenpair of $-\Delta^\Gamma_1$ and $\lambda\not= 0$  then
$u \in \Pi(\Gamma)$. Now, $TV_\Gamma(u)=\lambda$,
thus,   as a consequence of  Theorem~\ref{varchar}, we have the
$$ h(\Gamma) \leq \lambda.$$

On the other hand, by Theorem \ref{varchar}, there exists $t \geq 0$, such that $E_t(u)$ is a Cheeger cut of $\Gamma$. Then,  $E_t(u)$ is $\Gamma$-calibrable. Hence, by Theorem \ref{igualconsts2},
$$h(\Gamma) = h_1^\Gamma(E_t(u)) = \sup \left\{\frac{1}{\Vert \z \Vert_\infty }  \ : \ \z \in X_K(\Gamma), \ \z' = \1_{E_t(u)}  \right\}.$$

Then, there exists a sequence $\z_n \in X_K(\Gamma)$ with $\z_n' = \1_{E_t(u)}$ for all $n \in \N$, such that $$h(\Gamma) = \lim_{n \to \infty} \frac{1}{\Vert \z_n \Vert_\infty }.$$
Now, since $h(\Gamma) >0$, we have $\{ \Vert \z_n \Vert_\infty \ : \ n \in \N\}$ is bounded. Thus, we can assume, taking a subsequence if necessary, that $$\z_n \to \z , \quad\hbox{weakly$^*$ in} \ L^\infty(\Gamma), \quad \hbox{and} \quad \z' = \1_{E_t(u)}.$$
 Let us see now that $\z \in X_K(\Gamma)$.
by Proposition \ref{megusta1}, we have that
\begin{equation}\lim_{n \to \infty} \int_\Gamma \z_n Du = \int_\Gamma \z Du, \quad \forall \, u \in BV(\Gamma).
\end{equation}
Fix $\v \in V(\Gamma)$. Applying Green's formula \eqref{Kintbpart} to $\z_n$ and $u \in BV(\Gamma)$ , we get
$$\int \z_n Du + \int_\Gamma u \z_n^{\prime} = \sum_{\v \in V(\Gamma)} \sum_{\e\in \E_\v(\Gamma)} [\z]_\e(\v) [u]_\e(\v).$$
Hence, taking $u$ such that $[u]_e(\v) =1$ for all $\e\in \E_\v(\Gamma)$ and $[u]_{\hat{\e}} =0$ if $\v \not\in \E_\v(\Gamma)$, we have
$$\int \z_n Du + \int_\Gamma u \z_n^{\prime} =  \sum_{\e\in \E_\v(\Gamma)} [\z_n]_\e(\v) [u]_\e(\v) =0.$$
Then, taking the limit as $n \to \infty$ and having in mind \eqref{intbpart}, we obtain
$$0 = \int \z Du + \int_\Gamma u \z^{\prime} =  \sum_{\e\in \E_\v(\Gamma)} [\z]_\e(\v) [u]_\e(\v).$$
Therefore, $\z \in X_K(\Gamma)$.

If we take $\tilde{\z}:= - h(\Gamma) \z$, and $v:= \frac{1}{\ell(E_t(u))} \1_{E_t(u)}$, we have $\tilde{\z} \in X_K(\Gamma)$,  $\Vert \tilde{\z} \Vert_\infty \leq 1$ and
$$- \tilde{\z}' = h(\Gamma)\1_{E_t(u)}, \quad TV_\Gamma(v) = h(\Gamma).$$
Therefore,
$$\left(h(\Gamma), \frac{1}{\ell(E_t(u))} \1_{E_t(u)} \right)$$ is a $\Gamma$-eigenpair of $-\Delta^\Gamma_1$.
\end{proof}

\begin{remark}\label{malRossi}{\rm In \cite[Theorem 1.3]{DelPR1} was proved that if we define
\begin{equation}\label{minJulio}\Lambda_{2,p}(\Gamma):= \inf \left\{ \frac{\int_\Gamma \vert u'(x) \vert^p dx}{\int_\Gamma \vert u(x) \vert^p dx} \ : \ u \in W^{1,p}(\Gamma), \ \int_\Gamma \vert u(x) \vert^{p-2}u(x) dx =0, u \not\equiv 0 \right\},\end{equation}
then if $u_p$ is a minimizer of \eqref{minJulio}, there exists a subsequence $p_j \to 1^+$, and $u \in BV(\Gamma)$, such that $$u_{p_j} \to u\quad \hbox{in} \ L^1(\Gamma),$$
being $u$ is a minimizer of \eqref{minnb}. Moreover,
$$\lim_{p \to 1^+} \Lambda_{2,p}(\Gamma) =  \Lambda_{2,1}(\Gamma),$$
where
$$\Lambda_{2,1}(\Gamma) := \inf \left\{ \vert Du \vert (\Gamma ) \ : \ u \in\Pi(\Gamma) \right\}. $$
Let us point out that, since for $u \in BV(\Gamma)$, in general,  $\vert Du \vert (\Gamma ) < TV_\Gamma(u)$, we have $\Lambda_{2,1}(\Gamma) < \lambda_1(\Gamma)$. Moreover, even more, with this definition of $\Lambda_{2,1}(\Gamma)$, it is possible that $\Lambda_{2,1}(\Gamma) =0$, for instance if $V(\Gamma) = \{\v_1, \v_2, \v_3 \}$ and $E(\Gamma) = \{ \e_1:=[\v_1, \v_2], \e_2:=[\v_2, \v_3] \}$, with $\ell_{\e_1} =\ell_{\e_2}$, then if $[u_i]_{\e_i} = \1_{(0, \ell_i)}$, we have $u_i \in\Pi(\Gamma)$ and $ \vert Du_i \vert (\Gamma ) =0$.
}$\blacksquare$
\end{remark}

Let $A \subset \Gamma$ with $ {\rm Per}_\Gamma(A) >0$, $\ell(A) = \frac12 \ell(\Gamma)$,  and $u= \frac{1}{\ell(\Gamma)} \left(\1_A - \1_{\Gamma\setminus A} \right)$. It is easy to see that
$TV_\Gamma(u)=\frac{2}{\ell(\Gamma)}{\rm Per}_\Gamma(A) = \frac{{\rm Per}_\Gamma(A)}{\ell(A)} >0$. Hence, since $\Vert u\Vert_1=1$ and $0 \in {\rm med}_\nu (u)$, we obtain the following result as a consequence of~Theorem \ref{varchar}.

 \begin{corollary}
Let $A \subset \Gamma$ with $\ell(A) = \frac12 \ell(\Gamma)$ and $u= \frac{1}{\ell(\Gamma)} \left(\1_A - \1_{\Gamma \setminus A} \right)$. Then,

 $h(\Gamma) = \frac{{\rm Per}_\Gamma(A)}{\ell(A)} \iff u= \frac{1}{\ell(\Gamma)} \left(\1_A - \1_{\Gamma \setminus A} \right) \ \hbox{ is a minimizer of } \ \eqref{minnb}$.
\end{corollary}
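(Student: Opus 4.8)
The plan is to observe that this corollary is an immediate consequence of Theorem~\ref{varchar} once the two facts recorded in the paragraph just above the statement are in hand. That discussion already establishes that the candidate function $u = \frac{1}{\ell(\Gamma)}(\1_A - \1_{\Gamma \setminus A})$ belongs to the admissible class $\Pi(\Gamma)$: indeed $\Vert u \Vert_1 = 1$, and since $\ell(A) = \ell(\Gamma \setminus A) = \frac12 \ell(\Gamma)$ the superlevel set $\{u>0\} = A$ and the sublevel set $\{u<0\} = \Gamma \setminus A$ each have measure exactly $\frac12 \ell(\Gamma)$, so that $0 \in {\rm med}_\ell(u)$. The same discussion records the value $TV_\Gamma(u) = \frac{2}{\ell(\Gamma)}{\rm Per}_\Gamma(A) = \frac{{\rm Per}_\Gamma(A)}{\ell(A)}$.

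First I would recall, from Theorem~\ref{varchar}, that $h(\Gamma) = \lambda_1(\Gamma) = \inf\{TV_\Gamma(v) : v \in \Pi(\Gamma)\}$, and that by definition $u$ is a minimizer of \eqref{minnb} precisely when $u \in \Pi(\Gamma)$ and $TV_\Gamma(u) = \lambda_1(\Gamma)$. Since $u \in \Pi(\Gamma)$ is already known, being a minimizer is equivalent to the single scalar identity $TV_\Gamma(u) = \lambda_1(\Gamma)$.

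To conclude I would substitute the two values above, namely $TV_\Gamma(u) = \frac{{\rm Per}_\Gamma(A)}{\ell(A)}$ and $\lambda_1(\Gamma) = h(\Gamma)$. Then $u$ is a minimizer of \eqref{minnb} if and only if $TV_\Gamma(u) = \lambda_1(\Gamma)$, which, after this substitution, holds if and only if $\frac{{\rm Per}_\Gamma(A)}{\ell(A)} = h(\Gamma)$. This is exactly the asserted bi-implication, both directions emerging at once; no separate treatment of the forward and reverse implications is needed.

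There is no serious obstacle here, as the argument is a direct unfolding of the definition of minimizer together with Theorem~\ref{varchar}. The only points requiring any care are the two ``easy to see'' claims invoked above the corollary: the median property $0 \in {\rm med}_\ell(u)$, which rests on the balanced splitting $\ell(A) = \ell(\Gamma \setminus A)$, and the total-variation computation, which uses that $TV_\Gamma$ is unchanged by the additive constant $-\frac{1}{\ell(\Gamma)}$ (because $\int_\Gamma \z' = 0$ for $\z \in X_K(\Gamma)$) together with ${\rm Per}_\Gamma(A) = {\rm Per}_\Gamma(\Gamma \setminus A)$ from Remark~\ref{Complementario}. Implicit throughout is that $A$ is a set of finite perimeter with ${\rm Per}_\Gamma(A) > 0$, the latter being forced by $0 < \ell(A) < \ell(\Gamma)$ via \eqref{conssttLinear1}.
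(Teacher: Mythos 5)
Your proposal is correct and takes essentially the same route as the paper: the paper also obtains the corollary as an immediate consequence of Theorem~\ref{varchar}, using precisely the facts recorded in the preceding paragraph, namely that $\Vert u \Vert_1 = 1$, $0 \in {\rm med}_\ell(u)$, and $TV_\Gamma(u) = \frac{2}{\ell(\Gamma)}{\rm Per}_\Gamma(A) = \frac{{\rm Per}_\Gamma(A)}{\ell(A)}$, so that being a minimizer of \eqref{minnb} is equivalent to $\frac{{\rm Per}_\Gamma(A)}{\ell(A)} = \lambda_1(\Gamma) = h(\Gamma)$. Your verification of the ``easy to see'' claims (the median property, invariance of $TV_\Gamma$ under additive constants via the Kirchhoff condition, and ${\rm Per}_\Gamma(A) = {\rm Per}_\Gamma(\Gamma \setminus A)$) simply makes explicit what the paper leaves implicit.
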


A similar result was proved in \cite[Theorem 1.4]{DelPR1}, but we have observed in Remark \ref{malRossi}  that their concept of perimeter is different to the one we used here.

In the next example we will see that there are Cheeger cup $E$ such that $\ell(E) < \frac12\ell(\Gamma)$.

\begin{example}{\rm Consider the metric graph $\Gamma$ with four vertices and three edges, $V(\Gamma) = \{\v_1, \v_2, \v_3, \v_4 \}$ and $E(\Gamma) = \{ \e_1:=[\v_1, \v_2], \e_2:=[\v_2, \v_3], \e_3:=[\v_2, \v_4] \}$.

\begin{center}
\begin{tikzpicture}
 \tikzstyle{gordito2} = [line width=3]

\node (v1) at (-5.1,-1.2) {};
\node (v5) at (2,-1.2) {};

\node[below] at (v1) {$\v_1$};
\node[above] at (5.3093,1.0278) {$\v_3$};
\node[below] at (v5) {$\v_2$};
\node[above] at (5.3843,-3.4747) {$\v_4$};

\draw[gordito2]  (-5.1,-1) node (v2) {} circle (0.05);
\draw[gordito2]  (5,1) node (v3) {} circle (0.05);
\draw[gordito2]  (2,-1) node (v4) {} circle (0.05);
\draw[gordito2]  (5,-3) node (v4) {} circle (0.05);

\draw[->,line width=1.2]  (-5.1,-1)--(-2,-1);
\draw[line width=1.2]  (-2,-1) --(2,-1);
\draw[line width=1.2] (2,-1)--(5,-3);
\draw[->,line width=1.2] (2,-1)--(4.253,-2.5103);

\draw[line width=1.2](2,-1)--(5,1);

\draw[->,line width=1.2](2,-1)--(4.1524,0.4299);

\node at (-1.3,-1.4) {$\e_1$};
\node at (4.2,0) {$\e_2$};
\node at (3.3,-2.3) {$\e_3$};

\end{tikzpicture}
\end{center}

 If we assume that $\ell_{\e_i} = L$ for $i=1,2,3$, Then, each $\e_i$ is a Cheeger cup of $\Gamma$. In fact, if $D \subset \Gamma$ has ${\rm Per}_\Gamma(D) =1$, then $D \subset \e_i$. Now, if $D \not= \e_i$, then $$\frac{{\rm Per}_\Gamma(D)}{\ell(D)}  > \frac{{\rm Per}_\Gamma(\e_i)}{\ell(\e_i)} = \frac{1}{L}.$$ Moreover, if $D \subset \Gamma$ and $L < \ell (D) \leq \frac{3L}{2}$, then ${\rm Per}_\Gamma(D) \geq 2$. Hence
 $$\frac{{\rm Per}_\Gamma(D)}{\ell(D)} \geq \frac{2}{\frac{3L}{2}} = \frac{4}{3L} >\frac{1}{L}.$$
Thus
 $$h(\Gamma) = \frac{{\rm Per}_\Gamma(\e_i)}{\ell(\e_i)} = \frac{1}{L},$$
 and  consequently, each $\e_i$ is a Cheeger  cut of $\Gamma$.

 Moreover, $(h(\Gamma), \frac{1}{L} \1_{\e_i})$ is a $\Gamma$-eigenpair of $- \Delta^\Gamma_1$. For instance, for $\e_1$, if we define $\z$ as
 $$[\z]_{\e_1}(x):= - \frac{1}{L}x, \quad [\z]_{\e_i}(x):=  \frac{2}{L}x -2, \ i =2,3, \ x \in (0,L),$$
 then, $$- \z' = h(\Gamma) \1_{\e_1}\quad \hbox{in} \ \mathcal{D}(\Gamma), \quad \hbox{and} \quad TV_\Gamma(\frac{1}{L} \1_{\e_i}) = h(\Gamma).$$
 Therefore $(h(\Gamma), \frac{1}{L} \1_{\e_i})$ is an $\Gamma$-eigenpair of $- \Delta^\Gamma_1$.

 If we assume now that $\ell_{\e_1} > \ell_{\e_2} + \ell_{\e_3}$, then it is easy to see that $$h(\Gamma) = \frac{2}{\ell_{\e_1} + \ell_{\e_2} +\ell_{\e_3}}, \quad \hbox{and} \quad  \left(\v_1, c_{\e_1}^{-1}\left(\frac{\ell_{\e_1} + \ell_{\e_2} +\ell_{\e_3}}{2}\right)\right) \ \hbox{is a Cheeger  cut of $\Gamma$}.$$

}$\blacksquare$
\end{example}

Now we are going to get a Cheeger  inequality of type \eqref{Cheeger1} for metric graphs. For that let us introduce the Laplace operator $\Delta_\Gamma$ on the metric graph $\Gamma$.  This is a standard procedure
and we refer the interested reader to (\cite{Cattaneo}, \cite{BK}). For a function $u \in W^{1,1}(\Gamma)$, if $\e \in E(\Gamma)$ and $\v \in V(\Gamma)$, we define the normal derivative of $u$ at $\v$ as
$$\frac{\partial [u]_\e}{\partial n_\e}(\v) := \left\{ \begin{array}{ll} - [u]_\e (0+), \quad &\hbox{if} \ \ \v = \vi_\e \\[10pt]  [u]_\e (\ell-), \quad &\hbox{if} \ \ \v = \vf_\e.   \end{array} \right.$$

The operator $\Delta_\Gamma$ has domain
$$D(\Delta_\Gamma):= \left\{ u \in W^{2,2}(\Gamma) \ : \ u \ \hbox{continuous and } \  \sum_{\e \in E_\v(\Gamma)} \frac{\partial [u]_\e}{\partial n_\e}(\v) =0 \ \ \hbox{for all } \ \v \in V(\Gamma) \right\}$$
and it applies to any function $u \in D(\Delta_\Gamma)$  as follows
$$[\Delta_\Gamma u]_\e := ([u]_\e)_{xx}, \quad  \hbox{for all} \ \  \e \in E(\Gamma).$$

The energy functional form associated to $\Delta_\Gamma$ is given by
$$\mathcal{H}_\Gamma (u):= \int_\Gamma (u'(x))^2 dx = \sum_{\e \in E(\Gamma)} \Vert [u]_\e' \Vert^2_{L^2(0, \ell_\e)}.$$
We have
$$\mathcal{H}_\Gamma (u) = - \int_\Gamma u(x)\Delta_\Gamma u (x) dx, \quad \hbox{for} \ u \in D(\Delta_\Gamma).$$
The operator $- \Delta_\Gamma$ is selfadjoint in $L^2(\Gamma)$ and $$\sigma( -\Delta_\Gamma) \setminus \{0 \} = \{\mu_1(\Gamma), \mu_2(\Gamma), \ldots \},$$
being
$$\mu_1(\Gamma) = {\rm gap}(- \Delta_\Gamma)= \min\left\{ \frac{\mathcal{H}_\Gamma (u)}{\Vert u \Vert^2_{L^2(\Gamma)}} \ : \ u \in D(\mathcal{H}_\Gamma), \Vert u \Vert_{L^2(\Gamma)} \not=0, \ \int_\Gamma u(x) dx =0  \right\}.$$

\begin{theorem} We have the following {\it Cheeger Inequality}:
\begin{equation}\label{CheegerIn}
\frac14 h(\Gamma)^2 \leq {\rm gap}(- \Delta_\Gamma).
\end{equation}
\end{theorem}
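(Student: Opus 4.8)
The plan is to run the classical Cheeger argument, adapted to the jump-sensitive metric-graph total variation. First I would fix an eigenfunction $u \in D(\Delta_\Gamma)$ realizing the gap, that is $-\Delta_\Gamma u = \mu_1(\Gamma)\, u$ with $\mu_1(\Gamma) = {\rm gap}(-\Delta_\Gamma)$, normalized so that $\int_\Gamma u\,dx = 0$; the infimum defining ${\rm gap}(-\Delta_\Gamma)$ is attained by such a $u$, and membership in $D(\Delta_\Gamma)$ forces $u$ to be continuous at every vertex and to satisfy the Kirchhoff condition $\sum_{\e \in E_\v(\Gamma)} \frac{\partial [u]_\e}{\partial n_\e}(\v) = 0$. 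Since the three sets $\{u>0\}$, $\{u=0\}$, $\{u<0\}$ partition $\Gamma$, at least one of $\{u>0\}$, $\{u<0\}$ has length at most $\tfrac12\ell(\Gamma)$; replacing $u$ by $-u$ if needed, I may assume $\ell(\{u>0\}) \le \tfrac12\ell(\Gamma)$. I then set $g := u^+$, which is again continuous at the vertices, lies in $W^{1,2}(\Gamma)$, and satisfies $g' = u'\,\1_{\{u>0\}}$ a.e.

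Second, I would establish the Rayleigh identity $\int_\Gamma |g'|^2\,dx = \mu_1(\Gamma)\int_\Gamma g^2\,dx$ by testing the eigenvalue equation against $g$. Integrating $-\int_\Gamma g\,\Delta_\Gamma u$ edge by edge produces boundary terms $\sum_{\v\in V(\Gamma)}\sum_{\e\in E_\v(\Gamma)} [g]_\e(\v)\,\frac{\partial [u]_\e}{\partial n_\e}(\v)$; using that $g$ is continuous at each vertex (so $[g]_\e(\v)=g(\v)$ independently of $\e$) together with the Kirchhoff condition on $u$, each vertex sum collapses to $g(\v)\sum_{\e\in E_\v(\Gamma)}\frac{\partial[u]_\e}{\partial n_\e}(\v) = 0$. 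Hence $-\int_\Gamma g\,\Delta_\Gamma u = \int_\Gamma g'u'\,dx$, and since $g'u' = |g'|^2$ a.e. and $\int_\Gamma gu = \int_\Gamma g^2$, the identity follows.

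Third, I would bring in the Cheeger constant through the coarea formula. For $t>0$ the superlevel set $E_t(g^2) = \{u > \sqrt{t}\}$ is contained in $\{u>0\}$, hence has length at most $\tfrac12\ell(\Gamma)$, so the definition \eqref{1523m} of $h(\Gamma)$ yields ${\rm Per}_\Gamma(E_t(g^2)) \ge h(\Gamma)\,\ell(E_t(g^2))$ for a.e. such $t$; for $t<0$ one has $E_t(g^2)=\Gamma$ with ${\rm Per}_\Gamma(\Gamma)=0$ by \eqref{conssttLinear1}. Applying Theorem \ref{coarea1} and Cavalieri's principle then gives $TV_\Gamma(g^2) \ge h(\Gamma)\int_\Gamma g^2\,dx$. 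On the other hand, because $g^2$ is continuous at the vertices, \eqref{Form1zero} yields $TV_\Gamma(g^2) = \vert D(g^2)\vert(\Gamma) = \int_\Gamma |(g^2)'|\,dx = \int_\Gamma 2g\,|g'|\,dx$, which Cauchy--Schwarz bounds by $2\big(\int_\Gamma g^2\big)^{1/2}\big(\int_\Gamma |g'|^2\big)^{1/2}$. Combining the two estimates with the Rayleigh identity gives $h(\Gamma)\int_\Gamma g^2 \le 2\sqrt{\mu_1(\Gamma)}\int_\Gamma g^2$; dividing by $\int_\Gamma g^2 > 0$ (positive since $u$ changes sign) produces $h(\Gamma) \le 2\sqrt{\mu_1(\Gamma)}$, which is exactly \eqref{CheegerIn}.

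I expect the only genuinely delicate point to be the vanishing of the vertex terms in the second step together with the identification $TV_\Gamma(g^2) = \vert D(g^2)\vert(\Gamma)$ in the third: both hinge on the fact that a Laplacian eigenfunction is continuous at the vertices, so that its positive part carries no jump and the jump-sensitive total variation $TV_\Gamma$ coincides with the pointwise chain-rule expression. Were $u$ allowed to jump at a vertex, the chain-rule upper bound for $TV_\Gamma(g^2)$ would miss the jump contribution recorded by $JV_\Gamma$ in \eqref{Form1}, and the argument would break down; the continuity built into $D(\Delta_\Gamma)$ is precisely what rescues it.
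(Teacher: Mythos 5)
Your proof is correct, but it follows a genuinely different variant of the Cheeger argument than the paper's own proof of \eqref{CheegerIn}. You work with an actual $L^2$-eigenfunction $u$ attaining the gap, pass to its positive part $g=u^{+}$ after arranging $\ell(\{u>0\})\le\tfrac12\ell(\Gamma)$, and prove the nodal Rayleigh identity $\int_\Gamma |g'|^2\,dx=\mathrm{gap}(-\Delta_\Gamma)\int_\Gamma g^2\,dx$ by testing the eigen-equation against $g$, using continuity at the vertices and the Kirchhoff condition to kill the boundary terms; you then get $h(\Gamma)\int_\Gamma g^2\,dx\le TV_\Gamma(g^2)$ from the coarea formula and finish with Cauchy--Schwarz. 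The paper never invokes the eigen-equation at all: it takes a minimizer $u$ of the form-level Rayleigh quotient with $\int_\Gamma u\,dx=0$, sets $v=u-\alpha$ with $\alpha$ a median of $u$, and applies the variational characterization of $h(\Gamma)$ (Theorem \ref{varchar}) to the square of $v$; the elementary inequality $\int_\Gamma v^2\,dx\ge\int_\Gamma u^2\,dx$, valid because $u$ has zero mean, plays exactly the role your Rayleigh identity plays (both are needed to compare the $L^2$ norm of the test function with that of $u$), and Cauchy--Schwarz concludes as in your argument, since $v'=u'$. The trade-off is clear: the paper's route needs neither spectral theory (attainment of the gap by an eigenfunction in $D(\Delta_\Gamma)$, which you correctly note holds on a compact graph) nor any vertex-by-vertex integration by parts, only Theorem \ref{varchar}; your route is closer to Cheeger's original nodal-domain proof, and it has the incidental advantage that your test function $g^2$ is genuinely supported in a set of length at most $\tfrac12\ell(\Gamma)$, so the median condition holds literally, whereas the paper's condition ``$0\in{\rm med}_\ell(v^2)$'' must be read for the signed square $v|v|$ to be accurate. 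You also correctly isolate the one structural point specific to this metric-graph setting: continuity of the eigenfunction at the vertices is what makes $TV_\Gamma(g^2)$ collapse to $\int_\Gamma|(g^2)'|\,dx$ with no $JV_\Gamma$ contribution, so the jump-sensitive total variation causes no loss. Both arguments yield the same constant $\tfrac14$.
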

\begin{proof}
 Let $u \in D(\mathcal{H}_\Gamma)$, with $\int_\Gamma u(x) dx =0$, such that
$$\frac{\mathcal{H}_\Gamma (u)}{\Vert u \Vert^2_{L^2(\Gamma)}} =  {\rm gap}(- \Delta_\Gamma).$$
If $\alpha \in {\rm med}_{\ell}(u)$ and $v:= u - \alpha$, we have $0 \in {\rm med}_{\ell}(v^2)$.  Then, by Theorem \ref{varchar}, we have
$$ h(\Gamma) \leq \frac{\displaystyle\int_\Gamma (v^2)'(x) dx }{\displaystyle\int_\Gamma v^2(x) dx}.$$
Now, since $\int_\Gamma u(x) dx =0$, we have
$$\int_\Gamma v^2(x) dx \geq \int_\Gamma u^2(x) dx.$$
On the other hand, by Cauchy- Schwartz
$$\int_\Gamma (v^2)'(x) dx = 2 \int_\Gamma v(x) v'(x) dx \leq 2 \left(\int_\Gamma v^2(x)dx \right)^{\frac12} \left( \int_\Gamma (v')^2(x)dx\right)^{\frac12}.$$
Thus
$$h(\Gamma)^2 \leq \frac{4 \left(\displaystyle \int_\Gamma v^2(x)dx \right) \left( \displaystyle\int_\Gamma (v')^2(x)dx\right)}{\left(\displaystyle \int_\Gamma v^2(x)dx \right)^2} = \frac{4  \displaystyle\int_\Gamma (u')^2(x)dx}{\displaystyle \int_\Gamma v^2(x)dx } \leq \frac{4\mathcal{H}_\Gamma (u)}{\Vert u \Vert^2_{L^2(\Gamma)}} = 4 {\rm gap}(- \Delta_\Gamma),$$
and therefore \eqref{CheegerIn} holds.
\end{proof}

Let us point out that the Cheeger Inequality \eqref{CheegerIn} was also prove by Nicaise \cite{Nicaise} (see also \cite{KN}and \cite{Post}) but with a different proof and for a different concept of perimeter.
 \bigskip

\noindent {\bf Acknowledgment.} The author have been partially supported  by the Spanish MCIU and FEDER, project PGC2018-094775-B-100 and by Conselleria d'Innovaci\'{o}, Universitats, Ci\`{e}ncia y Societat Digital, project AICO/2021/223.

\bigskip

Data sharing not applicable to this article as no datasets were generated or analysed during the current study.

\bigskip


\begin{thebibliography}{00}


 \bibitem{AM} N. Alon and V.D. Milman, {\it $\lambda_1$, Isoperimetric inequalities for graphs, and superconcentrators}.  J. Combin. Theory Ser. B {\bf 38}, (1985) 73--88.



\bibitem{ACCh} F. Alter, V. Caselles and A. Chambolle, {\it A characterization of convex calibrable sets in $\R^N$.}  Math. Ann. {\bf 332} (2005), 329--366.

\bibitem{AFP}
{L. Ambrosio, N. Fusco and D. Pallara, }
\newblock {Functions of Bounded Variation
and Free Discontinuity Problems}.
\newblock Oxford Mathematical Monographs, 2000.

\bibitem{ACMBook}
F.~Andreu, V.~Caselles, and J.M. Mazon, \textit{Parabolic Quasilinear Equations
Minimizing
 Linear Growth Functionals},   Progress in Mathematics, vol. 223, 2004. Birkhauser.


\bibitem{Anzellotti} G. Anzellotti,
\newblock {\it Pairings Between Measures and Bounded Functions
and Compensated Compactness},
\newblock Ann. di Matematica Pura ed Appl. IV (135)
(1983), 293-318.


 \bibitem{BCFK} G. Berkolaiko, R. Carlson, S. Fulling and P. Kuchment, {\it  Quantum Graphs and Their Applications.} Contemporary Mathematics, vol. 415. American Mathematical Society, Providence (2006).


\bibitem{BK} G. Berkolaiko and P. Kuchment.
	  Introduction to quantum graphs.
	Mathematical Surveys and Monographs, 186.
	American Mathematical Society, Providence, RI, 2013. xiv+270 pp.

\bibitem{BF} M. Bonforte and A. Figalli, {\it Total Variation Flow and Sign fast Diffusion in one dimension}. J. Differential Equations {\bf 252} (2012), 4455-4480.

 \bibitem{Brezis}
H. Brezis,
\newblock {Operateurs Maximaux Monotones}.
\newblock North Holland, Amsterdam, 1973.


\bibitem{BH1} T. B\"{u}hler and M. Hein, {\it  Spectral Clustering based on the graph $p$-Laplacian.} In Proceedings of the 26th
International Conference on Machine Learning, pp. 81--88. Omnipress, 2009.


 \bibitem{Cattaneo} C. Cattaneo, {\it The spectrum of the continuous Laplacian on a graph.} Monatsh. Math., {\bf 124}, (1997),
215--235.

\bibitem{Chang0} K. C. Chang, {\it Spectrum of the $1$-Laplacian operator}. Comm. Contemporary Math. {\bf 11} (2009), 865--894.

\bibitem{Chang1} K. C. Chang, {\it Spectrum of the $1$-Laplacian and Cheeger's Constant on Graphs}. Journal of Graph Theory {\bf 81} (2016), 167--207.


\bibitem{Chang1} K. C. Chang, {\it Spectrum of the $1$-Laplacian and Cheeger's Constant on Graphs}. Journal of Graph Theory {\bf 81} (2016), 167--207.

 \bibitem{Changetal01}
K. C. Chang, S. Shao and D. Zhang, {\it The 1-Laplacian Cheeger Cut: Theory and
Algorithms}   Journal of Computational Mathematics
{\bf 33} (2015), 443--467.


\bibitem{ChSZ} K.C. Chang, S. Shao and D. Zhang, {\it  Cheeger's cut, maxcut and the spectral theory of $1$-Laplacian on graphs.} Sci. China Math. {\bf 60} (2017), 1963--1980.


  \bibitem{Cheeger}   J. Cheeger, {\it A lower bound for the smallest eigenvalue of the Laplacian},
Problems in analysis: A symposium in honor of Salomon Bochner (1970), 195--199. Princeton Univ. Press.

\bibitem{Ch} F. Chung, {\it Spectral Graph Theory} (CBMS Regional Conference Series in Mathematics, No. 92), American Mathematical Society, 1997.


 \bibitem{Giorgi} E. De Giorgi, {\it Sulla propriet\`{a} isoperimetrica dell’ipersfera, nella classe degli
insiemi aventi frontiera orientata di misura finita}, Atti della Accademia
Nazionale dei Lincei. Mem. Cl. Sci. Fis. Mat. Nat. Sez. I {\bf 5} (1958), 33--44.

 \bibitem{GS} S. Gnutzmann and U. Smilansky, {\it Quantum graphs: applications to quantum
chaos and universal spectral statistics.} Adv. Phys. {\bf 55} (2006), 527--625.

 \bibitem{DelPR1} L. Del Pezzo and J. Rossi, {\it Clustering for Metric Graphs Using the $p$-Laplacian}. Michigan Math. J. {\bf 65} (2016), 451-472.


 \bibitem{DelPR2} L. Del Pezzo and J. Rossi, {\it The first eigenvalued of the   $p$-Laplacian on quatum graphs}. Anal. Math. Phys {\bf 6} (2016), 365-391.

     \bibitem{D} J. Dodziuk, {\it Difference equations, isoperimetric inequality and transience of certain random walks}.  Trans. Amer. Math. Soc. {\bf  284} (1984), 787--794.


\bibitem{EkelandTemam} I. Ekeland, R. Temam, {\it Convex analysis and variational problems}. North-Holland Publ. Company, Amsterdam, 1976.

  \bibitem{EKKST}  P. Exner, J.P.  Keating, P. Kuchment, T. Sunada, and A. Teplyaev, {\it  Analysis on graphs and its applications.} In: Proceedings of Symposia in Pure Mathematics,
vol. 77, Providence, RI. Am. Math. Soc. (2008)


\bibitem{FF1} L.R. Jr. Ford and D.R. Fulkerson, {\it  Maximal flow through a network.} Canad. J. Math. {\bf 8} (1956), 399--404.



\bibitem{FK} V. Fridman, B. Kawohl, {\it Isoperimetric estimates for the first eigenvalue of the p-Laplace operator and the
Cheeger constant}. Comment. Math. Univ. Carolinae {\bf 44} (2003), 659--667.


 \bibitem{Grieser} D, Grieser, {\it  The first eigenvalue of the Laplacian, isoperimetric constants, and the max flow min cut theorem.} Archiv der Mathematik



    \bibitem{HB} M. Hein and T. B\"uhler, {\it   An Inverse Power Method for Nonlinear
Eigenproblems with Applications in
$1$-Spectral Clustering and Sparse PCA}. Advances in Neural Informatio Proceessing Systems {\bf 23} (2010), 847--855.


 \bibitem{KF} B. Kawohl and V. Fridman, {\it Isoperimetric estimates for the first eigenvalue of the $p$-Laplace operator and the Cheeger constant.} Comment. Math. Univ. Carolin.
{\bf 44} (2003), 659--667.


   \bibitem{KM} J.B. Kennedy and D. Mugnolo, {\it The Cheeger constant of a quantum graph.} Proc. Appl. Math. Mech. {\bf 16} (2016), 875--876.

          \bibitem{KN} A. Kostenko and N. Nicolusi, {\it Spectral estimates for the infinite quatum graphs}. Cal. Var. Partial Differential Equations {\bf 58} (2019),, no. 1, Paper 15, 40 pp.

  \bibitem{KS}     V. Kostrykin and R. Schrader, {\it  Laplacians on metric graphs: eigenvalues, resolvents and semigroups.} In
Quantum graphs and their applications, volume 415 of Contemp. Math., pages 201–225. Amer. Math. Soc.,
Providence, RI, 2006.

\bibitem{Leonardi} G.P. Leonardi. {\it An overview on the Cheeger problem.} In New Trends in Shape Optimization, volume 166 of Internat. Ser. Numer. Math., pages 117–139. Springer Int. Publ., 2015.


\bibitem{Luxburg} U. von Luxburg, {\it A tutorial on spectral clustering.} Stat. Comput. {\bf 17} (2007), 395--416.

\bibitem{Mazon} J.M. Maz\'{o}n, {\it The Total Variation Flow in Metric Graphs}. Mathematics in Engineering, 2023, 5(1): 1-38. doi: 10.3934/mine.2023009.

\bibitem{MRT}  J. M. Maz\'{o}n, J. D. Rossi and J. Toledo, {\it Nonlocal Perimeter, Curvature and Minimal Surfaces for Measurable Sets}.  J. Anal. Math. {\bf 138} (2019), no. 1, 235--279.

\bibitem{MRTLibro}  J. M. Maz\'{o}n, J. D. Rossi and J. Toledo, {Nonlocal Perimeter, Curvature and Minimal Surfaces for Measurable Sets}.  Frontiers in Mathematics, Birkh\"auser, 2019.



\bibitem{MST0}  J. M. Maz\'{o}n, M. Solera and J. Toledo, {\it   The heat flow on metric random walk spaces}. J. Math. Anal. Appl. 483, 123645 (2020).

\bibitem{MST1}  J. M. Maz\'{o}n, M. Solera and J. Toledo, {\it The total variation flow in  metric random walk spaces}. Calc.Var. (2020) 59:29.

     \bibitem{Meyer} Y. Meyer, {\it Oscillating patterns in image processing and nonlinear evolution equations,} University lecture Series, 22. American Mathematical Society, Providance, RI, 2001.

   \bibitem{Mugnolo}  D. Mugnolo, {\it Semigroup methods for evolution equations on networks. } Understanding Complex Systems. Springer, Cham, 2014.

       \bibitem{Nicaise}    S. Nicaise, {\it  Spectre des r\'{e}seaux topologiques finis}. Bull. Sci. Math., II. S\'{e}r., 111:401–413, 1987.

       \bibitem{Parini} E. Parini, {\it An Introduction to the Cheeger Problem}. Surveys in Math Appl. {\bf 6} (2011), 9-22.

      \bibitem{Post} O.Post {\it Spectrala nalysis of metricgraphs and related spaces}.In: Arzhantseva,G.,Valette,A.(eds.)Limits of Graphs in Group Theory and Computer Science,pp.109–140.Presses Polytechniqueset Universitaires Romandes, Lausanne (2009)

\bibitem{Strang1} G. Strang, {\it  Maximal flow through a domain.} Math. Program. {\bf 26} (1983), 123--143.
    \bibitem{Strang2} G. Strang, {\it  Maximum flow and minimaum cuts in the plane.} J. Glob. Optim.  {\bf 47} (2010), 527--535.

  \bibitem{SB1}  A. Szlam and X. Bresson,  {\it  Total Variation and Cheeger Cuts}.  Proceedings of the 27 th International Confer-
ence on Machine Learning, Haifa, Israel, 2010.

  \bibitem{Yau} S-T. Yau, {\it Isoperimetric constants and the first eigenvalue of a compact Riemann Manifold}. Ann. scient. Ec. Norm. Sup., 4ª s\'{e}rie, t.8, (1975), 487-507.

\end{thebibliography}
 \end{document}